\newtheorem{thm}{Theorem}[section]
\newtheorem{defi}[thm]{Definition}
\newtheorem{cor}[thm]{Corollary}
\newtheorem{lem}[thm]{Lemma}
\newtheorem{remark}[thm]{Remark}
\theoremstyle{definition}
\theoremstyle{remark}
\numberwithin{equation}{section}
\newcommand{\R}{{\mathbb R}}
\newcommand{\N}{{\mathbb N}}
\newcommand{\C}{{\mathbb C}}
\newcommand{\ten}{{\otimes}}
\newcommand{\bs}{\begin{split}}
\newcommand{\es}{\end{split}}
\newcommand{\be}{\begin{eqnarray*}}
\newcommand{\ee}{\end{eqnarray*}}
\newcommand{\beq}{\begin{align}}
\newcommand{\eeq}{\end{align}}
\newcommand{\s}{\mathscr{S}}
\def\1{\mathbf{1}}
\newcommand{\norm}[2]{\parallel \! #1 \! \parallel_{#2}}
\newcommand{\LG}[1]{\textcolor{red}{#1}}
\begin{document}

\setcounter{page}{1}	
\title[Quantum Boolean functions]
{Geometric influences on quantum Boolean cubes}

\author[D. Blecher]{David P. Blecher}
\address{
	Department of Mathematics\\
	University of Houston\\
	Houston, TX 77204-3008\\
	USA}

\email{dblecher@math.uh.edu}

\author[L. Gao]{Li Gao}
\address{
	School of Mathematics and Statistics\\
	Wuhan University\\
	Wuhan 430072\\
	China}

\email{gao.li@whu.edu.cn}
\author[B. Xu]{Bang Xu}
\address{
	Department of Mathematics\\
	University of Houston\\
	Houston, TX 77204-3008\\
	USA}

\email{bangxu@whu.edu.cn}


\subjclass[2020]{47A30, 05D40, 81P45}
\keywords{Geometric influences, Boolean cubes, Noise operators}

\begin{abstract}
	In this work, we study three problems related to the $L_1$-influence on quantum Boolean cubes. In the first place, we obtain a dimension free bound for $L_1$-influence, which implies the quantum $L^1$-KKL Theorem result obtained  by Rouze, Wirth and Zhang. 
Beyond that, we also obtain a high order quantum Talagrand inequality and  quantum $L^1$-KKL theorem.
Lastly, we prove a quantitative relation between the noise stability and $L^1$-influence.  To this end, our technique involves the random restrictions method as well as semigroup theory.
\end{abstract}

\maketitle

\section{Introduction}Boolean functions $f:\{-1,1\}^n\rightarrow\{-1,1\}$,
are central objects in theoretical computer science.
In the past several decades, the Fourier analysis of Boolean functions has been an indispensable tool in the study of Boolean functions, with deep implications in other areas in mathematics and theoretical computer science, such as graph theory, extremal combinatorics, coding theory as well as learning theory. We refer the reader to the excellent books  \cite{CC,O} on Boolean analysis.

Another rapid growing area in the last decades is quantum computation, which studies how to perform computation using quantum mechanic bits (called qubits) instead of classical bits. Correspondingly, Montanaro and Osborne in \cite{MO} initiated the study of quantum Boolean functions, which are defined to be observables on qubits system that takes value (spectrum) of only ${-1,1}$. In recent years, the study of quantum Boolean functions, or more generally Fourier analysis on qubits, has attracted more attention. This includes, the quantum KKL (Kahn-Kalai-Linia) theorem by Rouz\'e, Wirth and Zhang \cite{RWZ}, and the quantum Bohnenblust-Hille inequalities by Volberg and Zhang \cite{VZ}, which have applications in learning theory of quantum systems \cite{HCP}. Given the importance of Boolean analysis in classical computer science, it is foreseeable that quantum Boolean analysis will have more applications in quantum algorithms, quantum learning theory and other directions in quantum computation. Motivated by that,  in this work we further study the quantum version of KKL theorems, proving an improvement and a high order KKL theorem.

For the sake of motivation and history, let us briefly review the concepts in classical Boolean analysis. For a subset $S \subseteq [n] := \{1,2,\ldots,n\}$, the Walsh function $\chi_S: \{-1,1\}^n\rightarrow\R$ is defined as $\chi_S(x) := \prod_{j \in S} x_j$ and $\chi_\emptyset(x) := 1$. The system $\{\chi_S\colon S \subseteq [n]\}$ forms an orthonormal basis of $L^2(\{-1,1\}^n)$, which yields that every $f:\{-1,1\}^n\rightarrow\R$ admits a unique Fourier expansion
$f = \sum_{S \subseteq [n]} \widehat f(S) \, \chi_S$,
where $\widehat f(S) \in \R$.
For each $1\leq j\leq n$, the influence of the $j$-th variable on $f:\{-1,1\}^n\rightarrow\R$ is defined by $$\mathrm{Inf}_j[f]=\sum_{S\subseteq[n]:j\in S}\widehat f(S)^2$$ and the total influence of $f$ is given by
$\mathrm{Inf}[f]=\sum_{j=1}^n\mathrm{Inf}_j[f]$. The notion of influence of Boolean functions was first introduced by Ben-Or and Linial \cite{BOL}, which has become central to Boolean analysis in various contexts since then, including cryptography \cite{LMN93}, hardness of
approximation \cite{DS05,Has01}, and computational lower-bounds \cite{MM}.

The most basic inequality  associated to influence, known as {\em Poincar\'e's inequality,} states that for any $f:\{-1,1\}^n\rightarrow\R$,
one has \begin{align}\mathrm{Var}[f]\leq \mathrm{Inf}[f],\label{eq:PI}\end{align} where $\mathrm{Var}[f]:=\mathbb E[f^2]-\mathbb E[f]^2$ is the variance of $f$. This inequality \eqref{eq:PI} can be understood as a spectrum gap for $\mathrm{Inf}[f]$ as a Dirichlet form. It clearly implies that there exists some index $1\leq j\leq n$ such that $\frac{\mathrm{Var}[f]}{n}\leq\mathrm{Inf}_j[f]$. One may ask that whether $\frac{\mathrm{Var}[f]}{n}\approx\mathrm{Inf}_j[f]$ for all $1\leq j\leq n$, as  Poincar\'e's inequality may be tight in general. Surprisingly, Kahn, Kalai and Linial \cite{KKL88} gave a negative answer to this question. More precisely, the celebrated KKL Theorem states that for any Boolean function $f:\{-1,1\}^n\rightarrow\{-1,1\}$, there exist a coordinate $1\leq j\leq n$ and an absolute constant $C>0$ such that
\begin{equation}\label{KKL}
\mathrm{Inf}_j[f]\geq C \, \frac{\log n}{n} \, \mathrm{Var}[f].
\end{equation}
The inequality (\ref{KKL}) is further strengthened in another famous paper by Talagrand \cite{T}, who proved that there exists an absolute constant $C>0$ such that for any $f:\{-1,1\}^n\rightarrow\{-1,1\}$,
\begin{equation}\label{TAL}
\mathrm{Var}[f]\leq C\sum_{j=1}^n\frac{\mathrm{Inf}_{j}[f]
}{1+\log(1/\mathrm{Inf}_{j}[f])}.
\end{equation}
The KKL Theorem and Talagrand's inequality 
are fundamental in the analysis of Boolean functions, and have found many applications such as in learning theory, combinatorics, communication complexity and cryptography. We refer the reader to e.g. \cite{OS07,CKK06,GKK08}  and references therein for more details.

In the quantum setting, the classical hypercube $\{-1,1\}^n$ is replaced by the Hilbert space $H=(\C^2)^{\ten n}$ of $n$-qubits, and the variables $f:\{-1,1\}^n\to \mathbb{R}$ are replaced by the operators on $(\C^2)^{\ten n}$, i.e.,
$M_2(\C)^{\otimes n}\cong M_{2^n}(\C)$ where $M_k(\C)$ denotes the $k$-by-$k$ complex matrix algebra. For $1\leq p\leq\infty$ and $1\leq j\leq n$, the $j$-th $L^p$-influence of $A$ is given by $$\mathrm{Inf}^p_{j}[A]=\|d_j(A)\|_p^p,$$
where $d_j$ is the quantum bit-flip map on $j$-th qubit
$$d_j= \mathbb I^{\otimes(j-1)}\otimes \left(\mathbb I-\frac 1 2\mathrm{tr}\right)\otimes \mathbb I^{\otimes (n-j)}.$$
Here and in what follows, $\mathbb I$ denotes the identity map over $M_2(\C)$ and $\|\cdot\|_p$ denotes the Schatten $p$-norm on $M_2(\C)^{\otimes n}$ with respect to the normalized trace $2^{-n}\mathrm{tr}$ (see section 2 for more details). In the classical case, the $L^1$-influence is often called \emph{the geometric influence} for its relation to isoperimetric inequalities (see e.g. \cite{CEL12}). The total $L^p$-influence of $A$ is given by $$\mathrm{Inf}^p[A]=\sum_{j=1}^n\mathrm{Inf}^p_{j}[A].$$ 
The variance of $A$ is given by $\mathrm{Var}[A]=\|A-2^{-n}\mathrm{tr}(A)\|_2^2.$

Our first result is a  dimension-free bound for $L^1$-influence as the quantum extension of \cite{KKKMS}. 
\begin{thm}\label{thm:main1}
	Let $A\in M_2(\C)^{\otimes n}$ with $\|A\|\leq1$. Then there exists an absolute constant $C>0$  such that $$\max_{j}\mathrm{Inf}_j^1[A]\geq2^{-C\frac{\mathrm{Inf}^1[A]}{\mathrm{Var}[A]}}.$$
\end{thm}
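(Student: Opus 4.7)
The plan is to follow the classical semigroup/hypercontractivity route directly on $M_2(\C)^{\otimes n}$. Without loss of generality assume $2^{-n}\mathrm{tr}(A)=0$, so that $\mathrm{Var}[A]=\|A\|_2^2$, and set $\tau:=\max_j\mathrm{Inf}^1_j[A]$. Rearranging, the theorem is equivalent to the inequality
\begin{align*}
\mathrm{Var}[A]\;\leq\;\frac{C\,\mathrm{Inf}^1[A]}{\log(1/\tau)}.
\end{align*}
The three ingredients I would use are the quantum noise semigroup $P_t=T_{e^{-t}}^{\otimes n}$, quantum hypercontractivity $\|P_tB\|_2\leq\|B\|_{1+e^{-2t}}$, and the elementary bounds $\|d_jA\|_\infty\leq 2\|A\|_\infty\leq 2$ together with the H\"older consequence $\|d_jA\|_2^2\leq 2\|d_jA\|_1$.

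First I would establish the Dirichlet-form identity
\begin{align*}
\mathrm{Var}[A]\;=\;2\int_0^\infty \mathrm{Inf}^2[P_tA]\,dt,
\end{align*}
which follows from $-\tfrac{d}{dt}\|P_tA\|_2^2=2\,\mathrm{Inf}^2[P_tA]$ (Parseval in the Pauli basis, using that $P_t$ commutes with each $d_j$).

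The core step is a pointwise bound on $\mathrm{Inf}^2[P_tA]$. Writing $\|P_td_jA\|_2^2=\langle d_jA,\,P_{2t}d_jA\rangle$, applying H\"older in the Schatten norms, and using the hypercontractivity bound $\|P_{2t}\|_{2\to p(t)}\leq 1$ for $p(t):=1+e^{4t}$ gives $\|P_td_jA\|_2^2\leq \|d_jA\|_{p'(t)}\|d_jA\|_2$. Interpolating $\|d_jA\|_{p'(t)}\leq \|d_jA\|_\infty^{1-1/p'(t)}\|d_jA\|_1^{1/p'(t)}$ and combining with $\|d_jA\|_2\leq\sqrt{2\|d_jA\|_1}$ and $\|d_jA\|_\infty\leq 2$ yields, with $\alpha(t):=\tfrac{1}{p'(t)}+\tfrac{1}{2}\in[1,\tfrac{3}{2}]$,
\begin{align*}
\|P_td_jA\|_2^2\;\leq\;2^{2-\alpha(t)}\,\|d_jA\|_1^{\alpha(t)}.
\end{align*}
Summing over $j$ and using $\|d_jA\|_1\leq\tau$ with $\alpha(t)\geq 1$ (so $x^{\alpha(t)}\leq \tau^{\alpha(t)-1}x$ for $x\leq\tau$) gives the key inequality
\begin{align*}
\mathrm{Inf}^2[P_tA]\;\leq\;4\,\tau^{\alpha(t)-1}\,\mathrm{Inf}^1[A].
\end{align*}

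Finally, I would split the integral at $T:=\tfrac{1}{4}\log(1/\tau)$: use the hypercontractivity bound above on $[0,T]$, and the trivial exponential decay $\mathrm{Inf}^2[P_tA]\leq e^{-2t}\mathrm{Inf}^2[A]\leq 2e^{-2t}\mathrm{Inf}^1[A]$ on $[T,\infty)$. The main technical obstacle is showing $\int_0^T\tau^{\alpha(t)-1}\,dt\lesssim 1/\log(1/\tau)$; this rests on the Taylor expansion $\alpha(t)-1\sim t$ near $t=0$, so that $\tau^{\alpha(t)-1}\leq e^{-t\log(1/\tau)}$ on an initial interval, from which the integral bound follows after carefully handling the mid-range. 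The tail contributes $e^{-2T}=\tau^{1/2}=o(1/\log(1/\tau))$ as $\tau\to 0$, so one concludes $\mathrm{Var}[A]\leq C\,\mathrm{Inf}^1[A]/\log(1/\tau)$, which is equivalent to the stated theorem.
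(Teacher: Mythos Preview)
Your proposal is essentially correct and constitutes a genuinely different proof from the paper's. A couple of minor imprecisions: since $\alpha(t)-1=\tfrac{1}{2}\tanh(2t)\leq t$, the inequality $\tau^{\alpha(t)-1}\leq e^{-t\log(1/\tau)}$ goes the wrong way; what you actually need (and what holds) is $\alpha(t)-1\geq c\,t$ on $[0,1/2]$ with $c=\tanh(1)$, giving $\tau^{\alpha(t)-1}\leq e^{-ct\log(1/\tau)}$ and hence $\int_0^{1/2}\tau^{\alpha(t)-1}\,dt\lesssim 1/\log(1/\tau)$. For $t\in[1/2,T]$ one uses the uniform lower bound $\alpha(t)-1\geq\tfrac{1}{2}\tanh(1)$, so this piece contributes at most $T\,\tau^{\tanh(1)/2}$, which is $O(1/\log(1/\tau))$ uniformly since $L^2 e^{-cL}$ is bounded. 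Also, your ``$o(1/\log(1/\tau))$'' for the tail should be upgraded to a uniform $O$; this is immediate since $L\,e^{-L/2}$ is bounded. With these fixes, the argument goes through.

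The paper takes a completely different route: it argues by contradiction, decomposes $A$ into ``soft chunks'' $B_d=(T_{1-1/(3d)}-T_{1-1/(2d)})A$ indexed by dyadic $d$, and applies a key lemma (Lemma~\ref{bmo}) obtained from the log-Sobolev inequality on the partial Fourier coefficients $g_{j,k}=\tau_J(A\sigma_{k(j)})$ over a $\tfrac{1}{2d}$-random subset $J$. Your semigroup argument is more direct and shorter for this particular statement, and is closer in spirit to the Cordero--Erausquin--Ledoux approach in the classical case and to the methods of \cite{RWZ}. The paper's authors explicitly advertise their random-restriction argument as being of independent interest because it ports a tool widely used in classical Boolean analysis to the quantum setting; indeed they reuse it in Section~3.3 to reprove the quantum Talagrand inequality. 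So your approach buys simplicity, while theirs buys a technique with broader potential applications.
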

This reproduces the quantum KKL theorem for $L_1$-influence:
\begin{align} \max_{j}\mathrm{Inf}_j^1[A]\geq C\frac{\log n}{n} \mathrm{Var}(A) \label{eq:qkkl}\end{align}
by Rouz\'e, Wirth and Zhang from \cite[Theorem 3.6 \& 3.8]{RWZ}, which was proved via the quantum analog of Talagrand's inequality \cite[Theorem 3.2]{RWZ}
\begin{align} \mathrm{Var}[A]\leq C\sum_{j=1}^n\frac{\mathrm{Inf}^1_{j}[A](1+\mathrm{Inf}_{j}^1(A))
}{1+\log^+(1/\mathrm{Inf}^1_{j}[A])}. \label{eq:qt}\end{align}
Here $\log^+$ means the positive part of the logarithm.
 It is not clear that whether the quantum Talagrand's inequality \eqref{eq:qt} implies our Theorem \ref{thm:main1} nor vice versa. We emphasis that our method to $L^1$-influence quantum KKL Theorem \eqref{eq:qkkl} is quite different from \cite{RWZ}. While the proof from \cite{RWZ} mainly uses the semigroups technique, our argument adapts the method of random restriction to the quantum setting. The random restriction method was first introduced by Bourgain \cite{Bou02} as a powerful tool later widely used in Boolean analysis to prove results by induction (e.g. see \cite{KKLMS}). Given the importance of the this method in classical case, we believe our argument is of independent interests and opens up new approaches to other problems in quantum Boolean analysis. For instance, our method also gives an alternative proof for $L^1$-quantum Talagrand's inequality \eqref{eq:qt} (see Theorem \ref{main3}).


Our second result is a quantum Talagrand inequality and quantum KKL theorem for higher-order $L^1$-influences. Let $J=\{j_1,\cdot\cdot\cdot,j_k\}\subset [n]$ be a subset of indices. The $L^p$-influence of $J$ on $A$ is defined as
$$\mathrm{Inf}_{J}^p[A]=\|d_{j_1}\circ d_{j_2}\circ\cdot \circ d_{j_k}(A)\|_p^p. $$
\begin{thm}\label{thm:main2}
	Let $n\geq1$ and $A\in M_2(\C)^{\otimes n}$  with $\|A\|\leq1$. Then for any integer $k$ with $1\leq k\leq n$, we have
\begin{align}W_{\geq k}[A]\leq 24^k\cdot k!\sum_{|J|=k}\frac{\mathrm{Inf}_{J}^1[A]
}{[\log(1/\mathrm{Inf}_{J}^1[A])]^k},\label{eq:ht}\end{align}
where the summation is over all subset $J$ with $|J|=k$,  and $W_{\geq k}[A]=\sum_{|s|\ge k} |\hat{A}_s|^2$ is the $k$-th order variance (see Section 2 for the definition). As a consequence,
there exists an absolute constant $C>0$ such that
\begin{align}\max_{|J|=k}\mathrm{Inf}_{ J}^1[A]\geq C\Big(\frac{\log n
}{n}\Big)^k W_{\geq k}[A].\label{eq:ht1}\end{align}
\end{thm}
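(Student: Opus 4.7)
The proof naturally proceeds by induction on $k$, with base case $k=1$ given by the $L^1$-quantum Talagrand inequality \eqref{eq:qt} of Rouz\'e--Wirth--Zhang (re-derived in this paper via random restrictions). For the inductive step, the starting point is the combinatorial identity
\[
\sum_{|J'|=k-1}\|d_{J'}(A)\|_2^2 \;=\; \sum_{|S|\geq k-1}\binom{|S|}{k-1}|\widehat{A}(S)|^2 \;\geq\; k\,W_{\geq k}[A],
\]
which reduces the estimation of $W_{\geq k}[A]$ to that of the $L^2$-norms of the discrete derivatives $d_{J'}(A)$ for $(k{-}1)$-subsets $J'$.

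To each such $J'$ one applies the $k=1$ quantum Talagrand inequality to the rescaled operator $B_{J'} := 2^{-(k-1)}d_{J'}(A)$, which satisfies $\|B_{J'}\|_\infty\le 1$ because each bit-flip $d_j=\mathbb{I}-E_j$ has operator norm at most $2$. The identity $\mathrm{Inf}_j^1[d_{J'}(A)] = \mathrm{Inf}_{J'\cup\{j\}}^1[A]$ for $j\notin J'$ then translates the resulting bound into $k$-th order influences of $A$; upon summing over $J'$, each $k$-subset $J$ is counted exactly $k$ times (once per $j\in J$), producing the factor $k$ that combines with the inductive $(k{-}1)!$ to yield the $k!$ of the statement, while $24^k$ absorbs the cumulative $2^{k-1}$ rescalings together with the absolute constant coming from the base inequality.

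The principal obstacle is recovering the full $k$-th power of the logarithm in the denominator of \eqref{eq:ht}. A naive single-step induction yields only $[\log(1/\mathrm{Inf}_J^1[A])]^{k-1}$: the inductive hypothesis supplies $k-1$ log factors and the $k=1$ Talagrand contributes one more, but the rescaling $B_{J'}=2^{-(k-1)}d_{J'}(A)$ shifts the argument of the log and muddles the bookkeeping. Recovering the missing log power seems to require iterating a refined version of the base bound at each level, invoking the semigroup and hypercontractive estimates directly rather than treating \eqref{eq:qt} as a black box. The random-restriction viewpoint emphasized in the introduction is the expected organizing principle, in analogy with Bourgain's classical argument \cite{Bou02}: conditioning on a random subset of coordinates and averaging should provide exactly the extra quantitative gain that a purely inductive scheme misses.

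For the KKL consequence \eqref{eq:ht1}, set $\mu:=\max_{|J|=k}\mathrm{Inf}_J^1[A]$. Since the map $x\mapsto x/[\log(1/x)]^k$ is increasing on $(0,1)$, every summand on the right of \eqref{eq:ht} is at most $\mu/[\log(1/\mu)]^k$; combined with $\binom{n}{k}\le n^k/k!$ this gives $W_{\geq k}[A]\le (24n)^k\,\mu/[\log(1/\mu)]^k$. A standard dichotomy then closes the argument: if $\mu\ge C(\log n/n)^k W_{\geq k}[A]$ there is nothing to show, while otherwise $\log(1/\mu)\gtrsim k\log n$ and rearranging the displayed bound yields the desired lower bound for $\mu$ with an absolute constant $C>0$.
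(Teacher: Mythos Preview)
Your proposal for \eqref{eq:ht} has a genuine gap, which you yourself identify but do not close: the inductive scheme you sketch produces only one logarithmic factor per application of the $k=1$ Talagrand inequality, and there is no mechanism in your outline for accumulating $k$ of them. Concretely, applying the $k=1$ inequality to $B_{J'}=2^{-(k-1)}d_{J'}(A)$ and summing over $|J'|=k-1$ yields at best
\[
W_{\geq k}[A]\;\lesssim\; C^k\sum_{|J|=k}\frac{\mathrm{Inf}_J^1[A]}{\log(1/\mathrm{Inf}_J^1[A])},
\]
with a single log in the denominator. Your sentence ``the inductive hypothesis supplies $k-1$ log factors'' is not substantiated: nowhere in the argument do you apply the order-$(k-1)$ bound, and if you do apply it (say to $d_j(A)/2$), the resulting estimate again carries only $k-1$ logs, so the induction does not gain a factor at each step. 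The vague appeal to random restrictions and to ``invoking the semigroup and hypercontractive estimates directly'' is a hint in the right direction but is not a proof.

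The paper's argument is not inductive at all. It rests on an exact integral identity (Lemma~\ref{se1}): writing $\partial_J^\gamma(A)=\tau_J(d_J(A)\sigma_\gamma)$ for $\gamma\in\{1,2,3\}^k$, one has
\[
W_{\geq k}[A]=2k\sum_{|J|=k}\sum_{\gamma}\int_0^\infty (e^{2t}-1)^{k-1}e^{-2kt}\,\|P_t\,\partial_J^\gamma(A)\|_2^2\,dt,
\]
which follows from the Beta integral $\int_0^1(1-u)^{k-1}u^{m-k}\,du=\big(k\binom{m}{k}\big)^{-1}$ after the substitution $u=e^{-2t}$. Hypercontractivity gives $\|P_t\,\partial_J^\gamma(A)\|_2\le\|\partial_J^\gamma(A)\|_{p(t)}$ with $p(t)=1+e^{-2t}$, and H\"older together with $\|\partial_J^\gamma\|_{\infty\to\infty}\le 2^k$ yields $\|\partial_J^\gamma(A)\|_{p(t)}\le 2^k(\mathrm{Inf}_J^1[A])^{1/p(t)}$. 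After the change of variables $s=(1-e^{-2t})/(1+e^{-2t})$ one is left with $\int_0^1 s^{k-1}a^s\,ds\le\Gamma(k)/[\log(1/a)]^k$ for $a=\mathrm{Inf}_J^1[A]$, and it is precisely this single elementary integral that delivers all $k$ logarithms at once. Your treatment of \eqref{eq:ht1} is essentially correct and matches the paper's dichotomy argument.
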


We remark that the constant $C$ is independent of $k$ as the classical case \cite[Theorem 1.6]{TP} (see also \cite{T1}).

Another important concept we will investigate is noise sensitivity. Let $\delta\in [0,1]$. For $f:\{-1,1\}^n\rightarrow\R$, the noise stability  with parameter $\delta$ of $f$, denote by $S_\delta$, is given by the covariance
$$S_\delta[f]:=\mathrm{Cov}_{x\sim\{-1,1\}^n,y\sim N_\delta(x)}[f(x),f(y)],$$
where $y\sim N_\delta(x)$ denotes that each
bit of $y$ equals to the bit of $x\in\{-1,1\}^n$ with probability $1/2+1/\delta$, and flipped with
probability $1/2-1/\delta$. A sequence $\{f_m:\{-1,1\}^{n_m}\rightarrow\R\}_{m=1}^\infty$ is noise sensitive if for any $\delta>0$, $$\lim_{m\rightarrow\infty}S_\delta[f_m]=0.$$
Noise sensitivity has been extensively studied in Boolean analysis for its deep connections with measure concentration, threshold phenomena and isoperimetric inequalities (see the book \cite{O} for surveys). 
The connection between noise sensitivity and influences was first established in the work of Benjamini, Kalai and Schramm \cite{BKS}.  Their {\em BKS theorem} states that for any sequence of Boolean functions $\{f_m\}$,
\begin{align}\sum_{j=1}^{n_m}\mathrm{Inf}_j[f_m]^2\underset{m\to\infty}{\longrightarrow }0\ \ \Longrightarrow\ \   \{f_m\} \text{ is noise sensitive. } \label{eq:BKS}\end{align} 

In the quantum setting, the noise stability of a operator $A\in M_2(\mathbb{C})^{n}$ can be defined by the normalize trace inner product
\[ S_\delta(A):=\frac{1}{2^n}\mathrm{tr}(A^*T_\delta(A)), \]
where  $T_\delta=(\delta \mathbb{I}+(1-\delta) \frac{1}{2}\mathrm{tr})^{\ten n}$ is the tensor product depolarizing map. Our third result is a quantum generalizations of the quantitative BKS theorem obtained by Keller and Kindler \cite{KK}. 

\begin{thm}\label{thm:main3}
	Let $\delta\in(0,1)$, $n\geq1$ and $A\in M_2(\C)^{\otimes n}$ with $\|A\|\leq1$. Then  there exist two absolute constants $0<C_0<1$ and $C_1>0$ such that $$S_\delta [A]\leq C_1\Big(\sum_{j\in[n]}\mathrm{Inf}_{j}^1[A]^2\Big)^{C_0(1-\delta)}.$$
\end{thm}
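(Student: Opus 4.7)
The plan is to write the noise stability as the variance of a noise-damped operator and then apply the higher-order quantum Talagrand inequality \eqref{eq:ht} from Theorem \ref{thm:main2} to it at a well-chosen truncation level. Using self-adjointness of $T_{\sqrt\delta}$ on $(M_2(\mathbb{C})^{\otimes n},\langle\cdot,\cdot\rangle)$,
\[ S_\delta[A] = \|T_{\sqrt\delta}A\|_2^2 - |\mathbb{E}A|^2 = \mathrm{Var}[B], \qquad B := T_{\sqrt\delta}A. \]
Since $T_{\sqrt\delta}$ is unital and completely positive, $\|B\|\le\|A\|\le 1$, so Theorem \ref{thm:main2} applies to $B$. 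Denote $\epsilon_j := \mathrm{Inf}_j^1[A]$ and $M := \sum_j\epsilon_j^2$, fix an integer $k\ge 1$ to be optimized, and split
\[ S_\delta[A] = \sum_{\ell=1}^{k-1}\delta^\ell W_\ell[A] + W_{\ge k}[B], \]
applying \eqref{eq:ht} to $B$ at level $k$ for the tail.

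The core reduction is to control $\mathrm{Inf}_J^1[B]$ for $|J|=k$ in terms of the single-qubit $L^1$-influences of $A$. Since $d_J$ commutes with $T_{\sqrt\delta}$ and every Pauli term appearing in $d_J A$ has support containing $J$ (hence degree $\ge k$), one obtains $\|T_{\sqrt\delta}d_J A\|_2^2\le\delta^k\|d_J A\|_2^2$. Combined with the interpolation $\|d_J A\|_2^2\le\|d_J A\|_\infty\|d_J A\|_1\le 2^k\mathrm{Inf}_J^1[A]$ (using $\|d_J\|_{\infty\to\infty}\le 2^k$), this yields the key estimate
\[ \mathrm{Inf}_J^1[B]\le\|T_{\sqrt\delta}d_J A\|_2\le(2\delta)^{k/2}\sqrt{\mathrm{Inf}_J^1[A]}, \]
and the $L^1$-contractivity $\mathrm{Inf}_J^1[A]\le 2^{k-1}\epsilon_j$ for any $j\in J$ reduces further to the single-qubit influences $\epsilon_j$.

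Finally, the truncation level $k$ is chosen of order $(1-\delta)\log(1/M)$ so that the geometric decay $(2\delta)^{k/2}$ of $\mathrm{Inf}_J^1[B]$, combined with the logarithmic denominator $[\log(1/\mathrm{Inf}_J^1[B])]^{-k}$ from \eqref{eq:ht}, produces the target power $M^{C_0(1-\delta)}$; the combinatorial $k!\cdot 24^k$ from Theorem \ref{thm:main2} is absorbed into $C_1$. The low-degree sum $\sum_{\ell<k}\delta^\ell W_\ell[A]$ is handled similarly by applying \eqref{eq:ht} at each level $\ell<k$ together with the trivial bound $W_\ell[A]\le 1$. The main obstacle is to carry out the summation $\sum_{|J|=k}$ without incurring a dimension-dependent binomial factor $\binom{n}{k}$; this requires exploiting the Parseval identity $\sum_{|J|=k}\|(d_J A)_{=k}\|_2^2=W_k[A]$ to repack the sum in terms of degree-$k$ Fourier mass, together with careful bookkeeping of the interaction between the hypercontractive decay, the logarithmic denominators, and the $\ell^2$-constraint $\sum_j\epsilon_j^2=M$. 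Once these estimates are assembled, optimizing $k$ delivers the inequality $S_\delta[A]\le C_1 M^{C_0(1-\delta)}$ with absolute constants $0<C_0<1$ and $C_1>0$.
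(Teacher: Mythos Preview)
Your approach has a genuine gap at precisely the point you flag as the ``main obstacle''. Applying Theorem~\ref{thm:main2} to $B=T_{\sqrt\delta}A$ leaves you with
\[
W_{\ge k}[B]\le 24^k k!\sum_{|J|=k}\frac{\mathrm{Inf}_J^1[B]}{[\log(1/\mathrm{Inf}_J^1[B])]^k},
\]
a sum over $\binom{n}{k}$ subsets. Your bound $\mathrm{Inf}_J^1[B]\le(2\delta)^{k/2}\sqrt{\mathrm{Inf}_J^1[A]}$ and the further estimate $\mathrm{Inf}_J^1[A]\le 2^{k-1}\min_{j\in J}\epsilon_j$ only tie each term to a \emph{single} coordinate influence; they do not produce a product $\prod_{j\in J}\epsilon_j$ or anything else that would make the sum over $J$ collapse to a function of $M=\sum_j\epsilon_j^2$. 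A simple test case shows the obstruction is real: if all $\epsilon_j=\epsilon$ with $M=n\epsilon^2$ fixed, then $\sqrt{\mathrm{Inf}_J^1[A]}$ is of order $\epsilon^{1/2}\asymp n^{-1/4}$, and $\binom{n}{k}n^{-1/4}$ blows up with $n$. The Parseval identity $\sum_{|J|=k}\|(d_JA)_{=k}\|_2^2=W_k[A]$ you invoke is an $L^2$ statement and gives no control on the $L^1$ quantities $\mathrm{Inf}_J^1[B]$ appearing in \eqref{eq:ht}; there is no bridge here.

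The paper's proof avoids this entirely by not going through Theorem~\ref{thm:main2}. Instead it proves a direct level inequality (Lemma~\ref{l8}) bounding $W_{=d}[A]$ by $C_d\, M\,(\log(1/M))^{d-1}$ for each fixed degree $d$. The mechanism is a random restriction: one picks a $\tfrac{1}{d}$-random subset $J\subset[n]$ so that a degree-$d$ Fourier term typically has exactly one index in $J$, reducing the analysis to a sum over the $|J|$ \emph{single} coordinates $j\in J$. Each summand is then an inner product $\langle h_{j,k}\sigma_{k(j)},A\rangle$ whose size is tied to $\mathrm{Inf}_j^1[A]$ via a level-set / hypercontractivity argument, and squaring naturally produces $\sum_j\mathrm{Inf}_j^1[A]^2=M$. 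This single-index structure is what makes the bound dimension-free; your higher-order Talagrand route never gets down to single indices and so cannot recover it.
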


The rest of this paper is organized as follows. In Section 2, we recall the basics of Fourier analysis on quantum Boolean cubes. This section also reviews some necessary properties of quantum depolarizing semigroups and the noise operators including  hypercontractivity and logarithmic Sobolev inequality. Section 3 proves the   dimension-free bound of $L^1$-influence in Theorem \ref{thm:main1}, and also gives an alternative proof of the quantum $L^1$-Talagrand's
inequality (Theorem \ref{main3}). Section 4 is devoted to the quantum Talagrand inequality for higher order $L^1$-influence (Theorem \ref{thm:main2}). We prove the quantitative BKS theorem (Theorem \ref{thm:main3}) in Section 5. Section 6 includes an argument for the quantum FKN theorem fixing an error from \cite{MO}. We end the paper with a discussion on open problems and further directions.

\medskip

{\bf Note}: After the first version of the paper was completed, we were kindly informed by Dejian Zhou that Theorem \ref{thm:main1} has been independently discovered in a work in preparation by Yong Jiao, Sijie Luo and Dejian Zhou.

{\bf Acknowledgement}: This project is partially supported by NSF grant DMS-2154903. D.B. was partially supported by a Simons Foundation Travel/Collaboration grant. L.G. was also supported by National Natural Science Foundation of China.

\section{Preliminaries on Quantum Boolean Analysis}
In this section, we set up the notations and definitions that we will work with. The readers are referred  to \cite{MO,RWZ} for more information on quantum Boolean analysis. In the sequel, we write $X\lesssim Y$ (resp. $X\gtrsim Y$) if $X\leq C\, Y$ (resp. $X\geq C \, Y$) for some absolute constant $C$, and $X\approx Y$ if $X\lesssim Y$ and $X\gtrsim Y$.
\subsection{Fourier analysis on quantum Boolean hypercube}\label{qBoolean}
The (classical) Boolean analysis studies the functions on the Boolean hypercube $\{-1,1\}^n$, i.e. functions on a $n$-bits string. The quantum analog considers the observables on $n$-qubits system, which forms the matrix algebra $M_2(\C)^{\otimes n}$ on the complex Hilbert space $H=(\C^2)^{\ten n}$. We denote by $\mbox{tr}$ the standard matrix trace on $M_2(\C)^{\otimes n}\cong M_{2^n}(\C)$ and $\tau=\frac 1{2^n}\,\mathrm{tr}(\cdot)$ by the normalized trace. We will use the normalized Schatten-$p$ norm of $A\in M_2(\C)^{\otimes n}$ defined as 
\begin{align*}
	\|A\|_p=\tau(|A|^p)^{\frac{1}{p}},
\end{align*}
where $|A|:=(A^\ast A)^{1/2}$. For $p=\infty$, $\|\cdot\|_{\infty}\equiv \|\cdot\|$ is the usual operator norm.

Recall that Pauli matrices are self-adjoint unitaries
\begin{equation*}
	\sigma_0=\begin{pmatrix}1&0\\0&1\end{pmatrix},\quad \sigma_1=\begin{pmatrix}0&1\\1&0\end{pmatrix},\quad \sigma_2=\begin{pmatrix}0&-i\\i&0\end{pmatrix},\quad \sigma_3=\begin{pmatrix}1&0\\0&-1\end{pmatrix},
\end{equation*}
which form an orthonormal basis of $M_2(\C)$, with respect to the normalized trace inner product. For $n$-qubits system,
we have the tensor product of Pauli matrices: for a multi-index $s=(s_1,\dots, s_n)\in\{0,1,2,3\}^n$,
\begin{equation*}
	\sigma_s:=\sigma_{s_1}\otimes\dots\otimes \sigma_{s_n}\, ,
\end{equation*}
form an orthonormal basis of $M_2(\C)^{\otimes n}$. Accordingly, every $A\in M_2(\C)^{\otimes n}$ has an unique Fourier expansion
\begin{equation}\label{eq:quantum_fourier_expansion}
	A=\sum_{s\in\{0,1,2,3\}^n} \widehat A_s \,\sigma_s,
\end{equation}
where 
$\widehat A_s=
\tau(\sigma_s A)\in \C\,$
are the Pauli coefficients. 

Denote $[n]=\{1,\dots,n\}$. Given an multi-index $s=(s_1,\dots, s_n)\in\{0,1,2,3\}^n$, we denote the {\em support} of $s$ as
\[ \mbox{supp} \, s :=\{j\in [n] \ |\ s_j\neq0 \}\]
and $|\mbox{supp} \, s|$ as its cardinality. We say that $A\in M_2(\C)^{\otimes n}$ is of {\em degree} $d$ if $\widehat A_s=0$ whenever $|\mbox{supp}  \,  s|>d$, and it is called of $d$-{\em homogeneous} if $\widehat A_s=0$ for $|\mbox{supp}  \,  s|\neq d$. For each integer $d\geq1$, the {\em  Fourier weight} of $A\in M_2(\C)^{\otimes n}$ at degree $d$ is $$W_{=d}[A]=\sum_{\substack{s\in \{0,1,2,3\}^n\\|\mbox{supp}  \, s|=d}}| \widehat{A}_s|^2.$$
We will also use the notation
$$ W_{\geq d}[A]=\sum_{\substack{s\in \{0,1,2,3\}^n\\|\mbox{supp}  \, s|\geq d}}
|\widehat{A}_s|^2 ,\  \; \; \mbox{and}\ \; \; W_{\approx d}[A]=\sum_{d\leq j<2d}W_{=j}[A].$$

\subsection{$L^p$-influence} 
Following \cite[Definition 10.1]{MO} (see also \cite{RWZ}),
the $j$-th {\em derivative operator} $d_j$ with $j\in [n]$ is defined as 
\begin{equation}\label{eq:quantum_fourier_expansion1}
	d_j(A):= \mathbb I^{\otimes(j-1)}\otimes (\mathbb I-E)\otimes \mathbb I^{\otimes (n-j)}(A)
	=\sum_{\substack{s\in \{0,1,2,3\}^n\\s_j\neq 0}}  \widehat A_s \; \sigma_s\,.
\end{equation}
where $\mathbb I$ denotes the identity map and $E(X)=\frac{1}{2}\mbox{tr}(X) {\bf 1}$ is the trivial conditional expectation on $M_2(\C)$. Equivalently, one can write
$d_j=\mathbb I-E_j$, where
\[E_j= \mathbb I^{\otimes(j-1)}\otimes E\otimes \mathbb I^{\otimes (n-j)}\] is the conditional expectation onto the subalgebra $M_2(\C)^{\otimes(j-1)}\otimes \mathbb{C}I \otimes M_2(\C)^{\otimes(n-j)}$ trivial on the $j$th-qubit. Hence $d_j\circ d_j=d_j$ is an idempotent and an orthogonal projection with respect to the $2$-norm.

For $p\ge 1$, define the \textit{$j$th $L^p$-influence}  and  the \textit{total $L^p$-influence} as follows
\[\mbox{Inf}^p_j[A]:=\|d_j (A)\|_p^p\, \ ,\ \,  \mbox{Inf}^p [A]:=\sum_{j=1}^{n}\mbox{Inf}_j^p(A)\ .\]
The $L^1$-influence is also called {\em the geometric influence}. We will write $\mathrm{Inf}_j[A]=\mathrm{Inf}^2_j[A]$ and $\mathrm{Inf}[A]=\mathrm{Inf}^2[A]$, as is customary.
Observe that
\begin{align}\label{rwz25}
	\mathrm{Inf}[A] = \sum_{j \in [n]} \, \sum_{s: s_j \neq 0} \, | \widehat{A}_s |^2
	= \sum_s \,  \sum_{j \in {\rm supp} \, s} \, | \widehat{A}_s |^2 =
	\sum_s \, |{\rm supp} \, s | \, | \widehat{A}_s |^2.
\end{align}
\begin{lem} \label{was21}
	Let $A\in M_2(\C)^{\otimes n}$ with $\|A\|\leq1$. Then for any $1\leq p<2$ and each $j\in[n]$,
	\begin{equation}\label{inf}
		2^{p-2}\mathrm{Inf}_j[A]\leq \mathrm{Inf}_j^p[A]\leq \big(\mathrm{Inf}_j [A]\big)^{p/2}\le 1.
	\end{equation}
	As a consequence, $\mathrm{Inf}[A]\leq2^{2-p}\mathrm{Inf}^p[A]$.
\end{lem}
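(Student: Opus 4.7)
The plan is to derive all three inequalities from two basic structural facts about $d_j=\mathbb I-E_j$: that it is a bounded map in operator norm (with bound $2$) and that it is an orthogonal projection on $L^2(\tau)$. Throughout, the key background tool is that with respect to the normalized trace $\tau$, the Schatten norms $\|\cdot\|_p$ behave as $L^p$-norms on a probability space, so for $1\le p\le q\le\infty$ we have $\|X\|_p\le\|X\|_q$, and in particular the interpolation inequality
\[
\|X\|_2^2=\tau(|X|^{2-p}\,|X|^p)\le \|X\|_\infty^{2-p}\|X\|_p^p.
\]

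First I would establish the two structural bounds. Since $E_j$ is a conditional expectation, it is contractive in every Schatten $p$-norm (and in operator norm), so
\[
\|d_j(A)\|_\infty=\|A-E_j(A)\|_\infty\le 2\|A\|_\infty\le 2.
\]
Second, using the Fourier description in \eqref{eq:quantum_fourier_expansion1}, $d_j$ is the orthogonal projection onto $\mathrm{span}\{\sigma_s:s_j\ne0\}$ in $L^2(\tau)$, so
\[
\|d_j(A)\|_2\le \|A\|_2\le \|A\|_\infty\le 1.
\]

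For the upper chain $\mathrm{Inf}^p_j[A]\le (\mathrm{Inf}_j[A])^{p/2}\le 1$, I would apply $\|\cdot\|_p\le\|\cdot\|_2$ to $d_j(A)$, raise to the $p$-th power, and use the definitions $\mathrm{Inf}^p_j[A]=\|d_j(A)\|_p^p$ and $\mathrm{Inf}_j[A]=\|d_j(A)\|_2^2$; the second inequality is then immediate from $\mathrm{Inf}_j[A]\le 1$. For the lower bound, I would feed the two structural estimates into the interpolation inequality above with $X=d_j(A)$, giving
\[
\mathrm{Inf}_j[A]=\|d_j(A)\|_2^2\le \|d_j(A)\|_\infty^{2-p}\|d_j(A)\|_p^p\le 2^{2-p}\,\mathrm{Inf}^p_j[A],
\]
which rearranges to $2^{p-2}\mathrm{Inf}_j[A]\le\mathrm{Inf}^p_j[A]$. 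The consequence is then obtained by summing this last inequality over $j\in[n]$.

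There is no real obstacle here; the proof is a short interpolation argument. The only point worth being careful about is that the bound $\|d_j(A)\|_\infty\le 2$ (rather than $\le 1$) is what produces the factor $2^{p-2}$ in the lower inequality, and this factor cannot be improved by this method because $E_j$ is not, in general, contractive to zero on a unital element. If one wanted to avoid the factor $2^{2-p}$ in the final inequality for $\mathrm{Inf}[A]$, one would need a different argument (e.g.\ using $\|d_j(A)\|_\infty \leq \|A-cI\|_\infty$ for $c$ a suitable scalar), but for the stated lemma the above suffices.
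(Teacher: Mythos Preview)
Your proof is correct and is essentially identical to the paper's argument: both use $\|d_j\|_{\infty\to\infty}\le 2$ together with the H\"older/interpolation inequality $\|X\|_2^2\le \|X\|_\infty^{2-p}\|X\|_p^p$ for the lower bound, and the monotonicity $\|\cdot\|_p\le\|\cdot\|_2$ on the normalized trace together with $\|d_j(A)\|_2\le\|A\|_2\le 1$ for the upper chain. The only differences are cosmetic (you state the interpolation inequality explicitly before applying it, while the paper writes the H\"older step directly).
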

\begin{proof}
	Since $d_j= I-E_j$, $\|d_j\|_{\infty\rightarrow\infty}\leq2$.
	It  then follows from H\"older's inequality that
	$$\|d_j(A)\|_2\leq\|d_j(A)\|_p^{p/2}\|d_j(A)\|_\infty^{1-p/2}\leq2^{1-p/2}\|d_j(A)\|_p^{p/2},$$
	which gives $\|d_j(A)\|^2_2\leq2^{2-p}\|d_j(A)\|_p^p$. By H\"older's inequality once more with $q$ being the conjugate of $p/2$,
	$$\|d_j(A)\|_p^p\leq\big(\tau(1)\big)^{1/q}\big(\tau(|d_j(A)|^2)\big)^{p/2}=\big(\mathrm{Inf}_j [A]\big)^{p/2}\le \|A\|_2^{p}\leq\|A\|^{p}\leq1,$$
	as required.
\end{proof}


\subsection{Depolarizing Semigroup and Noise operators}
Since $d_j=\mathbb{I}-E_j$ is an idempotent, it follows that
\begin{equation}\label{eq:l2_influence_dirichlet}
	\mathrm{Inf}[A]=\sum_{j=1}^n \tau((d_j A)^\ast d_j(A))=\tau(A^\ast \mathcal L(A)),
\end{equation}
which is the Dirichlet form with the generator $\mathcal L:=\sum_{j=1}^n d_j$. Note that $d_j\circ d_i=d_i\circ d_j$. The $\mathcal{L}$ generates the tensor product {\em quantum depolarizing semigroup}  $(P_t)_{t\ge 0}: M_2(\C)^{\otimes n}\to M_2(\C)^{\otimes n}$
\begin{equation*}
	P_t=e^{-t\mathcal L}=\left(e^{-t}\,\mathbb I+(1-e^{-t})E\right)^{\otimes n}\underset{t\to\infty}{\longrightarrow }E^{\otimes n}:=E_{[n]}\,,
\end{equation*} where $E_{[n]}(\rho)=\tau(\rho){\bf 1}$ is the completely depolarizing map on $M_2(\C)^{\otimes n}$.
We have the following expression for $P_t$ via Fourier series:
\begin{equation}\label{eq:defn of semigroup1}
	P_t \,  A=\sum_{s\in\{0,1,2,3\}^n}e^{-t|\mbox{supp}  \, s|} \widehat A_s \,\sigma_s.
\end{equation}

We collect some further properties for $(P_t)_{t\geq0}$ as follows. Recall that the variance of a operator $A$ is
\[\text{Var}[A]=\|A-E_{[n]}(A)\|^2_2\ ,\]
and the entropy of a nonnegative operator $\rho$ is defined as
$$\mbox{Ent}[\rho]=\tau(\rho\log\rho)-\tau(\rho)\log\tau(\rho),$$
with the convention $0\log0=0$.
\begin{lem}\label{basic}Let $(P_t)_{t\ge 0}: M_2(\C)^{\otimes n}\to M_2(\C)^{\otimes n}$ be the tensor depolarizing semigroup above. We have
	\begin{itemize}
		\item[i)] \emph{(Poincar\'e inequality)} For any $A\in M_2(\C)^{\otimes n}$ and $t\geq 0$,
		\begin{align}\|P_t(A)-E_{[n]}(A)\|_2\leq e^{-t}\|A-E_{[n]}(A)\|_2. \label{eq:poincare}\end{align}
		Equivalently, one has
		\begin{align}\mathrm{Var}[A]\le \mathrm{Inf}[A]\ .\label{eq:poincare2}\end{align}
		\item[ii)] \emph{(Hypercontractivity)} For any $A\in M_2(\C)^{\otimes n}$ and $p(t)= 1+e^{-2t}$ with $t\ge 0$,
		\begin{align}
			\|P_t(A)\|_2\leq \|A\|_{p(t)}\,.
			\label{eq:hyper}\end{align}
		Equivalently, one has the log-Sobolev inequality that for any $A\in M_2(\C)^{\otimes n}$
		\begin{align}\mathrm{Ent}[|A|^2]\leq 2\mathrm{Inf}[A]\ .\label{eq:lsi}\end{align}
		As  a consequence, for any   $A$ of  degree at most $d\geq 2$ and for $q\geq2$,
		\begin{align}
			\|A\|_q\leq(q-1)^{d/2}\|A\|_2.
			\label{eq:degree}\end{align}
		\item[iii)] \emph{(Intertwining relation)} For any $j\in\{1,\dots,n\}$ and $t\geq 0$,
		\begin{align}
			d_j P_t=P_t d_j.
			\label{eq:degree}\end{align}
	\end{itemize}
\end{lem}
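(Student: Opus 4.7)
The plan is to exploit the Pauli Fourier expansion $P_t A=\sum_s e^{-t|\mathrm{supp}\,s|}\widehat A_s\sigma_s$ from formula (2.4), which trivializes parts (i) and (iii), and to reduce part (ii) to single-qubit hypercontractivity plus tensorization.

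For part (i), I would observe that $E_{[n]}(A)=\widehat A_0 \sigma_0$ picks out the $s=0$ coefficient, so Parseval gives
\[\|P_t A-E_{[n]}(A)\|_2^2=\sum_{s\ne 0} e^{-2t|\mathrm{supp}\,s|}|\widehat A_s|^2\le e^{-2t}\sum_{s\ne 0}|\widehat A_s|^2=e^{-2t}\mathrm{Var}[A],\]
since $|\mathrm{supp}\,s|\ge 1$ for $s\ne 0$. The equivalent spectral-gap form (\ref{eq:poincare2}) follows either by differentiating at $t=0$ or directly from formula (\ref{rwz25}): $\mathrm{Inf}[A]=\sum_s|\mathrm{supp}\,s|\,|\widehat A_s|^2\ge\sum_{s\ne 0}|\widehat A_s|^2=\mathrm{Var}[A]$. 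For part (iii), I would give two short arguments: on Fourier coefficients both $d_jP_t$ and $P_td_j$ act as the multiplier $s\mapsto \mathbf 1_{s_j\ne 0}\, e^{-t|\mathrm{supp}\,s|}$; alternatively, since $P_t=\bigotimes_{k=1}^n(e^{-t}\mathbb I+(1-e^{-t})E)$ and $d_j=\mathbb I-E_j$ only acts on the $j$-th tensor factor, the commutation boils down to the single-qubit identity $(\mathbb I-E)(e^{-t}\mathbb I+(1-e^{-t})E)=e^{-t}(\mathbb I-E)=(e^{-t}\mathbb I+(1-e^{-t})E)(\mathbb I-E)$, using $E^2=E$.

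The substantive part is (ii). My plan is the classical two-step route. First, establish single-qubit hypercontractivity $\|e^{-t(\mathbb I-E)}B\|_2\le\|B\|_{1+e^{-2t}}$ for $B\in M_2(\C)$; this is the well-known hypercontractive bound for the qubit depolarizing channel (proved e.g.\ in \cite{MO} and recorded in \cite{RWZ}), which can be verified by writing $B$ in the Pauli basis and reducing to a two-point inequality since the depolarizing map acts diagonally on $\{\sigma_0,\sigma_1,\sigma_2,\sigma_3\}$ with eigenvalues $1$ and $e^{-t}$. Second, tensorize: as the semigroup is a tensor product of identical symmetric contractive semigroups, hypercontractivity passes to $M_2(\C)^{\otimes n}$ via the standard Minkowski/induction argument (which carries over to the noncommutative setting without change because the conditional expectations $E_j$ are trace-preserving and commute). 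Then I would invoke Gross's theorem, adapted to the quantum setting, to derive the equivalent log-Sobolev form (\ref{eq:lsi}) by differentiating $\|P_tA\|_{p(t)}$ at $t=0$ and using the identity $\frac{d}{dp}\|A\|_p^p|_{p=2}=\mathrm{Ent}[|A|^2]+\|A\|_2^2\log\|A\|_2^2$ together with the Dirichlet form expression (\ref{eq:l2_influence_dirichlet}).

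The moment inequality $\|A\|_q\le(q-1)^{d/2}\|A\|_2$ for degree-$d$ observables drops out as follows. Dualizing the just-established hypercontractivity gives $\|P_tA\|_q\le\|A\|_2$ for $q=1+e^{2t}$. From the Fourier expansion, $P_t$ acts on a degree-$d$ observable as a multiplier bounded below by $e^{-td}$, so $A=P_t(e^{t\mathcal L|_{\le d}}A)$ with a well-defined inverse, yielding $\|A\|_q\le e^{td}\|A\|_2$; substituting $e^{2t}=q-1$ gives the stated estimate. The main obstacle, strictly speaking, is the single-qubit hypercontractivity and the quantum Gross theorem — but since both are already in the literature (\cite{MO,RWZ}) and this is a preliminaries lemma, I would treat them as cited black boxes and focus the written proof on the short algebraic arguments for (i), (iii), and the derivation of the moment inequality from (ii).
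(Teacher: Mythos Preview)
Your proposal is correct and matches the paper's treatment: the paper does not prove this lemma either, but simply cites \cite{MO} for (i) and (ii) in the self-adjoint case, \cite{DJKR} for the extension to general $A$, \cite{OZ} for the hypercontractivity/LSI equivalence, and \cite{TK} for a direct LSI proof. Your explicit Fourier arguments for (i), (iii) and the moment bound are exactly the standard ones the paper leaves implicit.

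One caveat worth noting: your remark that the Minkowski/induction tensorization of hypercontractivity ``carries over to the noncommutative setting without change'' understates a real subtlety. Tensorization of hypercontractivity for noncommutative $L^p$ spaces does not follow from the classical Minkowski argument verbatim, because the vector-valued $L^p$ norms involved do not factor as cleanly; this is precisely why the paper invokes \cite{DJKR} (a Cauchy--Schwarz/complete positivity argument) rather than a naive induction. Since you ultimately treat this as a cited black box, your proof is fine, but the parenthetical justification you give for tensorization is not the right one.
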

The Poincar\'e inequality i) and hypercontractivity ii) are obtained in \cite[Section 8 \& 9]{MO} whenever $A$ is self-adjoint. The extension to general operators is a standard Cauchy-Schwarz type argument (see \cite[Theorem 12]{DJKR}) using $P_t$ are completely positive maps. The equivalence between the hypercontractivity and the
log-Sobolev inequality in the noncommutative setting are also well-known  (see \cite[Theorem 3.8]{OZ}). The log-Sobolev inequality for tensor depolarizing semigroup $P_t$ was also proved directly in \cite{TK}. 

Following \cite[Definition 8.1]{MO}, we will also use the notion of the noise operator $T_\delta$, which defined as
\begin{equation}\label{noise}
	\ T_\delta(A)=\sum_{s\in\{0,1,2,3\}^n} \delta^{|\mbox{supp}  \, s|} \, \widehat A_s \,\sigma_s,\  \quad \delta\in(0,1)
\end{equation}
with $T_1=\mathbb{I}$ and $T_0(A)=\widehat A_0{\bf 1}$.
Observe that $T_{e^{-t}}=P_t$. Hence we have contractivity that for any $1\leq p\leq \infty$ and $A\in  M_2(\C)^{\otimes n}$
\begin{align}\label{eq:conctract}\norm{T_\delta(A)}{p}\le \norm{A}{p}\end{align}
and the hypercontractivity: for any $1\leq p\leq2\leq q\leq\infty$,
\begin{align}\label{noise12}
	\|T_\delta A\|_q\leq\|A\|_p\ \ \ \ \text{ if } \ \delta\leq\sqrt{\frac{p-1}{q-1}}.
\end{align}	
The noise stability of $A$ is defined as
\begin{align}\label{noise1222}
	S_\delta[A]=\sum_{s\neq0} \, \delta^{|\mathrm{supp} \, s|} \, |\widehat A_{s}|^2.
\end{align}

We have the following easy consequence of the log-Sobolev inequality \eqref{eq:lsi}.
\begin{lem}\label{log-sobo}
	For any $A\in M_2(\C)^{\otimes n}$ with $\|A\|\leq1$ and for any $1\leq p<2$  
	$$\mathrm{Inf}[A]\geq-\frac{1}{2}\|A\|_2^2\log\|A\|_2^2-\frac{1}{2-p}\|A\|_p^{p/2}\|A\|_2.$$
\end{lem}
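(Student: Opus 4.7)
The plan is to apply the log-Sobolev inequality \eqref{eq:lsi} and control the negative part of the entropy by a tracial Cauchy--Schwarz step. Since
\begin{equation*}
\mathrm{Ent}[|A|^2] \;=\; \tau\bigl(|A|^2\log|A|^2\bigr) \;-\; \|A\|_2^2\log\|A\|_2^2,
\end{equation*}
inserting this into \eqref{eq:lsi} reduces the lemma to the upper bound
\begin{equation*}
\tau\bigl(|A|^2\log(1/|A|^2)\bigr) \;\le\; \frac{2}{2-p}\,\|A\|_p^{p/2}\,\|A\|_2.
\end{equation*}

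To prove this, I would factor, via the functional calculus on the positive operator $|A|$,
\begin{equation*}
|A|^2\log(1/|A|^2) \;=\; |A|^{p/2}\cdot |A|^{(4-p)/2}\log(1/|A|^2),
\end{equation*}
and apply the tracial Cauchy--Schwarz inequality $\tau(XY)\le \|X\|_2\|Y\|_2$ with $X=|A|^{p/2}$ and $Y=|A|^{(4-p)/2}\log(1/|A|^2)$ (both self-adjoint) to obtain
\begin{equation*}
\tau\bigl(|A|^2\log(1/|A|^2)\bigr) \;\le\; \|A\|_p^{p/2}\,\tau\bigl(|A|^{4-p}\log^2(1/|A|^2)\bigr)^{1/2}.
\end{equation*}

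The heart of the argument is then the scalar inequality
\begin{equation*}
y^{4-p}\log^2(1/y) \;\le\; \frac{4}{(2-p)^2}\,y^2\qquad\text{for all } y\in[0,1]
\end{equation*}
(with the convention $0\log 0=0$), which I would derive from the elementary bound $\log u\le r^{-1}(u^r-1)$ applied with $r=(2-p)/2$ and $u=1/y$: this yields $\log(1/y)\le \tfrac{2}{2-p}\,y^{-(2-p)/2}$ on $(0,1]$, and squaring then multiplying by $y^{4-p}$ gives the claim (continuity at $y=0$ handles the endpoint). Because $\|A\|\le 1$ forces $\mathrm{spec}(|A|)\subseteq[0,1]$, functional calculus upgrades this to the operator inequality $|A|^{4-p}\log^2(1/|A|^2)\le \tfrac{4}{(2-p)^2}|A|^2$; taking $\tau$ supplies $\tau(|A|^{4-p}\log^2(1/|A|^2))^{1/2}\le \tfrac{2}{2-p}\|A\|_2$.

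There is no serious obstacle; the estimate is short and the only minor delicacy is the behaviour of the scalar inequality at $y=0$, handled by continuity and the convention $0\log 0=0$. Combining the Cauchy--Schwarz step with the spectral bound yields the displayed inequality, and substituting back into the log-Sobolev inequality \eqref{eq:lsi} together with the entropy expansion produces the stated estimate with the correct constants $\tfrac{1}{2}$ and $\tfrac{1}{2-p}$.
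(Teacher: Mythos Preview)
Your approach is essentially the paper's: reduce via the log-Sobolev inequality \eqref{eq:lsi} to bounding $\tau(|A|^2\log|A|^{-2})$, then Cauchy--Schwarz plus a one-variable inequality on $[0,1]$ applied through the functional calculus. The only cosmetic difference is the splitting in Cauchy--Schwarz: the paper takes $X=|A|\log|A|^{-2}$, $Y=|A|$ (so the scalar inequality becomes $t^2\log^2(1/t^2)\le \tfrac{16}{e^2(2-p)^2}\,t^p$), whereas you take $X=|A|^{p/2}$, $Y=|A|^{(4-p)/2}\log|A|^{-2}$; both routes give the same final constant.

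There is, however, a bookkeeping slip in your passage from the scalar to the operator inequality. Your scalar bound is on $y^{4-p}\log^2(1/y)$, but the quantity that actually appears after functional calculus on $|A|$ is $y^{4-p}\log^2(1/y^2)=4\,y^{4-p}\log^2(1/y)$; so your derivation via $\log u\le r^{-1}(u^r-1)$ only yields $|A|^{4-p}\log^2(1/|A|^2)\le \tfrac{16}{(2-p)^2}|A|^2$, a factor $4$ too large for the constant you claim. The fix is easy and is exactly what the paper does: optimize directly to get the sharp scalar bound $y^{2-p}\log^2(1/y^2)\le \tfrac{16}{e^2(2-p)^2}$ (maximum at $y=e^{-2/(2-p)}$), whence the operator inequality holds with constant $\tfrac{16}{e^2(2-p)^2}<\tfrac{4}{(2-p)^2}$ and your stated conclusion follows.
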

\begin{proof}
	By \eqref{eq:lsi} and the definition of entropy, it is enough to show
	\begin{align}\label{log-sob}
		\tau(|A|^2\log |A|^{-2})\leq\frac{2}{2-p}\|A\|_p^{p/2}\|A\|_2.
	\end{align}	
	Indeed, it follows from the Cauchy-Schwarz inequality that
	$$\tau(|A|^2\log |A|^{-2})\leq\tau(|A|^2\log^2(|A|^{-2}))^{1/2}\|A\|_2\leq\frac{4}{(2-p)e}\|A\|_p^{p/2}\|A\|_2,$$
	where in the last inequality we used the assumption $\|A\|\leq1$, and
	the basic inequality $t^2\log^2(1/t^2)\leq\frac{16}{e^2}\frac{1}{(2-p)^2}|t|^p$ for $t\in[-1,1]$.
	This finishes the proof.	
\end{proof}


\subsection{Conditional Expectations and Partial Fourier Series}
Let $J\subset [n]$ be a subset and $J^c$ be its complement. Denote ${\bf 1}_J$ as the identity operator in $M_2(\C)^{J}$. We introduce the normalized partial trace $\tau_J$ and the conditional expectation $E_J$ over a subset $J$ as
\begin{align}&\tau_J:M_2(\C)^{\otimes n}\to M_2(\C)^{J^c}\ ,\  \tau_J(X)=\tau_J\ten \mathbb{I}_{J^c}(X)\nonumber
	\\ & E_J:M_2(\C)^{\otimes n}\to M_2(\C)^{\otimes n}\ , \  E_J(X)=\tau_J(X)\ten {\bf 1}_J\label{eq:partial},
\end{align}
where $\tau_J$ is the normalized matrix trace on $M_2(\C)^{J}$ and $\mathbb{I}_{J^c}$ is the identity map on its complement.
These two maps are essentially the same map by identifying $M_2(\C)^{J^c}\cong M_2(\C)^{J^c}\ten (\mathbb{C}{\bf 1}_J)\subset M_2(\C)^{\otimes n}$. These conditional expectations mutually commutes and satisfies
\[ E_I\circ E_J =E_J\circ E_I=E_{I\cup J}\ , E_J=\prod_{j\in J}E_j\ . \]

For every $A\in M_2(\C)^{\otimes n}$, it admits an unique partial Fourier expansion over $J$:
\[A=\sum_{t\in \{0,1,2,3\}^J} \widehat{A}_{t,J}\sigma_{t}\]
where $\sigma_{t}\in M_2(\C)^{J}$ and its coefficient
\[ \widehat{A}_{t,J}=\tau_J(A \sigma_{t})\]
Here the multiplication $\widehat{A}_{t,J}\sigma_{t}$ is a shorthand for the tensor product $\widehat{A}_{t,J}\otimes \sigma_{t}\in  M_2(\C)^{J^c}\otimes M_2(\C)^{J}\cong M_2(\C)^{\otimes n}$. In the sequel, $t\in \{0,1,2,3\}^{J}$ means that $t\in \{0,1,2,3\}^{n}$ and $\mathrm{supp}\ t\subseteq J$. For $s\in \{0,1,2,3\}^{J^c}$ and $t\in \{0,1,2,3\}^{J}$, we write $s\cup t$ as the joint element in $\{0,1,2,3\}^{n}$. Then $\widehat{A}_{t,J}$ admits a further expansion
\[ \widehat{A}_{t,J}=\sum_{s\in \{0,1,2,3\}^{J^c}} \widehat{A}_{s\cup t}\sigma_{s}\ , \ \|\widehat{A}_{t,J}\|_{2}^2=\sum_{s\in \{0,1,2,3\}^{J^c}} |\widehat{A}_{s\cup t}|^2\ .\]

\subsection{Random Subsets}
The idea of random subsets play an important role in many works of Boolean analysis. We refer to e.g. \cite{Bou02,KS1,DFKO05,KK,KKKMS,KKLMS} for the applications of random subset.

\begin{defi}[Random subset]
	Let $\delta\in [0,1]$.
	We say $J \subset [n]$ is a $\delta$-random subset of $[n]$ if for any subset $I\subset [n]$, the probability $\mathrm{Pr}[J=I]=(1-\delta)^{n-|I|}\delta^{|I|}$.
\end{defi} By the identification \[\{0,1\}^n\cong 2^{[n]}\  ,\  r=(r_1,\cdots,r_n)\mapsto  \mbox{supp}\ r , \]
$J$ is basically the joint $n$-copies of the i.i.d.\  Bernoulli random variables with $\mathrm{Pr}[r_i = 1]=\delta$.

Given $v=(v_1,v_2,\cdots, v_n)\in \{0,1,2,3\}^{n}$, we are interested in the event that a random subset $J$ intersect with $\mbox{supp}\, v$ at exactly one site, whose probability is
\[\mathrm{Pr}[|J \cap {\rm supp}\, v| =1]= |\mbox{supp}\ v|\delta (1-\delta)^{|\mathrm{supp}\, v|-1}\]
By Fubini's theorem, we have the following lemma.
\begin{lem}\label{first level}
	Let $J$ be  a $\delta$-random subset of $[n]$ with $\delta\in[0,1]$. Then for $A\in M_2(\C)^{\otimes n}$,
	\begin{align*}
		&\sum_{v\in\{0,1,2,3\}^n} \Big(|\widehat A_v|^2 \; \mathrm{Pr}[|\mathrm{supp}\, v \cap J|=1]\Big)
		=\mathbb E_J\Big[\sum_{\substack{v\in \{0,1,2,3\}^{n}\\
				|\mathrm{supp}\, v \cap J|=1}} |\widehat A_{v}|^2\Big].
	\end{align*}
	Moreover, $\mathrm{Pr}[J \cap {\rm supp}\, v| =1]= |\mathrm{supp}\, v|\delta (1-\delta)^{|\mathrm{supp}\, v|-1}$.
\end{lem}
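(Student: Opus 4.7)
The statement is really two assertions: an identity obtained by swapping a deterministic sum with an expectation, and an explicit formula for the probability $\mathrm{Pr}[|J\cap \mathrm{supp}\, v|=1]$. I will handle them separately, since neither requires any quantum input beyond the Parseval-type decomposition already introduced.

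For the displayed identity, my plan is simply to apply Fubini (or equivalently, linearity of expectation). Writing $\mathbf{1}_{A}$ for the indicator of an event $A$, note that
\[
\mathrm{Pr}[|\mathrm{supp}\, v\cap J|=1]=\mathbb E_J\bigl[\mathbf{1}_{|\mathrm{supp}\, v\cap J|=1}\bigr],
\]
so the left-hand side equals
\[
\mathbb E_J\Bigl[\sum_{v\in\{0,1,2,3\}^n}|\widehat A_v|^2\,\mathbf{1}_{|\mathrm{supp}\, v\cap J|=1}\Bigr],
\]
which after restricting the summation to the indices where the indicator equals $1$ is exactly the right-hand side. The only thing to verify is that the interchange is legitimate, which is immediate because the outer sum is finite and all terms are nonnegative.

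For the probability formula, the plan is to partition the event $\{|J\cap \mathrm{supp}\, v|=1\}$ according to which single element of $\mathrm{supp}\, v$ lies in $J$:
\[
\{|J\cap \mathrm{supp}\, v|=1\}=\bigsqcup_{j\in \mathrm{supp}\, v}\{J\cap \mathrm{supp}\, v=\{j\}\}.
\]
Since $J$ is determined by $|\mathrm{supp}\, v|$ independent Bernoulli$(\delta)$ choices within $\mathrm{supp}\, v$ (the remaining coordinates of $[n]\setminus \mathrm{supp}\, v$ are unconstrained and hence contribute probability $1$ after summation), each event in the disjoint union has probability $\delta(1-\delta)^{|\mathrm{supp}\, v|-1}$. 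Summing over the $|\mathrm{supp}\, v|$ choices of $j$ gives $|\mathrm{supp}\, v|\,\delta(1-\delta)^{|\mathrm{supp}\, v|-1}$, as claimed.

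There is no real obstacle here; the lemma is essentially a bookkeeping statement that recasts the partial Fourier weight of $A$ on ``Pauli indices that hit $J$ at exactly one site'' in two equivalent ways. The only minor care needed is to keep the two parametrizations consistent: the right-hand side of the displayed equation is phrased as a sum over $v\in\{0,1,2,3\}^n$ restricted by a $J$-dependent condition, which matches the left-hand side term by term after the Fubini swap described above.
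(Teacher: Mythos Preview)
Your proof is correct and matches the paper's approach exactly: the paper attributes the displayed identity to Fubini's theorem and states the probability formula as the standard Bernoulli computation, just as you do. There is nothing to add.
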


\section{Proof of Theorem \ref{thm:main1}}
This section is devoted to the proof of Theorem \ref{thm:main1}, which reproduces the quantum KKL theorem for $L^1$-influence. Different with \cite{RWZ} using hypercontractivity and semigroup techniques, we use the random restriction method and log-Sobolev inequality (equivalent form of hypercontractivity at infinitesimal level) following the idea from \cite{KKKMS}. 

\subsection{A key lemma}

We start with the following lemma, which is crucial in proving Theorem \ref{thm:main1}.
\begin{lem}\label{bmo}
	Let $A\in M_2(\C)^{\otimes n}$ with $\|A\|\leq1$ and $J\subseteq[n]$. 
	Then 
	$$\mathrm{Inf}[A]\geq\frac{1}{2} \log\Big(\frac{1}{\max_{j\in J}2\mathrm{Inf}_j^1[A]}\Big)\cdot W_{J}(A)-\sqrt{3}\sqrt{\mathrm{Inf}^1 [A]}\cdot\sqrt{W_{J}(A)},$$
	where
	$$W_{J}(A)=\sum_{\substack{v\in \{0,1,2,3\}^{n}\\
			|{\mathrm{supp}\, v \cap J|=1}}} |\widehat{A}_v|^2.$$
\end{lem}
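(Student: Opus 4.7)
The plan is to localize $A$ via auxiliary operators $B_j$ whose $L^2$-masses collectively reconstruct $W_J(A)$, and then extract a log-Sobolev deficit from each $B_j$ using Lemma~\ref{log-sobo}. For each $j\in J$ I would set
\[
B_j := E_{J\setminus\{j\}}\bigl(d_j(A)\bigr) = d_j\bigl(E_{J\setminus\{j\}}(A)\bigr),
\]
where the two expressions agree because distinct $E_i$'s commute and $d_j=\mathbb I-E_j$. Expanding in the Pauli basis gives
\[
B_j=\sum_{\substack{v\in\{0,1,2,3\}^n\\ v_j\ne 0,\ v_i=0\ \forall i\in J\setminus\{j\}}}\widehat{A}_v\,\sigma_v.
\]
The Fourier supports of $\{B_j\}_{j\in J}$ are therefore pairwise disjoint and their union is exactly $\{v:|\mathrm{supp}\,v\cap J|=1\}$, so
\[
\sum_{j\in J}\|B_j\|_2^2=W_J(A),\qquad \sum_{j\in J}\mathrm{Inf}[B_j]\le\mathrm{Inf}[A].
\]
Since $d_j=\mathbb I-E_j$ and each $E_{I}$ is an $L^p$-contraction, we also have $\|B_j\|_\infty\le 2$ and $\|B_j\|_1\le\|d_j(A)\|_1=\mathrm{Inf}_j^1[A]$.

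Next I would apply Lemma~\ref{log-sobo} at $p=1$ to the contraction $B_j/2$. Rescaling via $\mathrm{Inf}[cX]=c^2\mathrm{Inf}[X]$ and $\|cX\|_p=|c|\|X\|_p$ yields
\[
\mathrm{Inf}[B_j]\ \ge\ \tfrac12\|B_j\|_2^2\,\log\!\bigl(4/\|B_j\|_2^2\bigr)\ -\ \sqrt{2}\,\|B_j\|_1^{1/2}\,\|B_j\|_2.
\]
By the standard interpolation $\|B_j\|_2^2\le\|B_j\|_1\|B_j\|_\infty$, together with the bounds on $\|B_j\|_1$ and $\|B_j\|_\infty$ above,
\[
\|B_j\|_2^2\le 2\max_{j\in J}\mathrm{Inf}_j^1[A],
\]
so the log-factor in the previous display is at least $\log\bigl(1/(2\max_{j\in J}\mathrm{Inf}_j^1[A])\bigr)$ uniformly in $j$.

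Finally I would sum over $j\in J$. The log term produces $\tfrac12\log\bigl(1/(2\max_{j\in J}\mathrm{Inf}_j^1[A])\bigr)\cdot W_J(A)$ after factoring out the uniform log-bound, while Cauchy--Schwarz handles the error:
\[
\sum_{j\in J}\|B_j\|_1^{1/2}\|B_j\|_2\ \le\ \Bigl(\sum_{j\in J}\|B_j\|_1\Bigr)^{1/2}\Bigl(\sum_{j\in J}\|B_j\|_2^2\Bigr)^{1/2}\ \le\ \sqrt{\mathrm{Inf}^1[A]\cdot W_J(A)},
\]
using $\sum_{j\in J}\|B_j\|_1\le\sum_{j\in[n]}\mathrm{Inf}_j^1[A]=\mathrm{Inf}^1[A]$. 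Combined with $\sum_{j\in J}\mathrm{Inf}[B_j]\le\mathrm{Inf}[A]$, this gives the claimed inequality (in fact with constant $\sqrt{2}$, which is slightly sharper than the stated $\sqrt{3}$). I do not foresee a substantive obstacle: the only step needing care is the pairwise-disjointness of the Pauli supports of the $B_j$'s, which lets the per-$j$ log-Sobolev deficit aggregate into a bound on the full $\mathrm{Inf}[A]$; everything else is Hölder, Cauchy--Schwarz, and bookkeeping of constants.
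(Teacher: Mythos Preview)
Your proof is correct and follows essentially the same strategy as the paper: localize $A$ into pieces whose $L^2$-masses recover $W_J(A)$, apply the log-Sobolev estimate (Lemma~\ref{log-sobo}) to each piece, bound the $\|\cdot\|_2^2$ appearing inside the logarithm by the $L^1$-influence, and control the error term via Cauchy--Schwarz.

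The only difference is packaging. The paper decomposes one level further, working with the partial Fourier coefficients $g_{j,k}=\tau_J(d_j(A)\sigma_{k(j)})\in M_2(\C)^{J^c}$ indexed by both $j\in J$ and the Pauli type $k\in\{1,2,3\}$; these satisfy $\|g_{j,k}\|\le 1$ directly, at the cost of an extra sum over $k$ that produces the factor $\sqrt{3}$. Your $B_j=E_{J\setminus\{j\}}(d_j(A))$ is precisely $\sum_{k=1}^3 g_{j,k}\,\sigma_{k(j)}$, so you aggregate over $k$ before invoking log-Sobolev. This costs you $\|B_j\|\le 2$ (hence the rescaling by $1/2$), but avoids the triple sum and yields the sharper constant $\sqrt{2}$. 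Either way the substance---disjoint Fourier supports giving $\sum_j\mathrm{Inf}[B_j]\le\mathrm{Inf}[A]$, the $L^1$-contractivity of conditional expectation giving $\|B_j\|_1\le\mathrm{Inf}_j^1[A]$, and the interpolation $\|B_j\|_2^2\le\|B_j\|_1\|B_j\|_\infty$---is identical.
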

Here $W_J$ is the $L_2$-weight of the Fourier terms of $A$ such that $|\mathrm{supp}\, v \cap J|=1$. It can be decomposed as
$W_J=\sum_{k=1}^3W_{J,k}$ and $W_{J,k}$ is given by
$$W_{J,k}(A)=\sum_{j\in J}\sum\limits_{s\in \{0,1,2,3\}^{J^c}}\sum\limits_{\substack{t\in \{0,1,2,3\}^{J}\\  \mathrm{supp} \, t=\{j\},t_j=k}} |\widehat A_{s\cup t}|^2.$$
For each $k\in\{1,2,3\}$, define $g_{j,k}$ as the element
\begin{align}\label{e12}
	g_{j,k}:=\sum\limits_{s\in \{0,1,2,3\}^{J^c}}\sum\limits_{\substack{t\in \{0,1,2,3\}^{J}\\ \mathrm{supp} \, t=\{j\},t_j=k}}
	\widehat A_{s\cup t} \sigma_{s}.
\end{align}
Note that the second summation above contains just one term $t=k(j)$, where we denote by \[k(j)=(0,\dots, 0, \underset{j\text{th}}{k}, 0\cdots, 0) \in\{0,1,2,3\}^{J}\] the index  being $k$ at site $j$ and vanishes elsewhere. Then $g_{j,k}$ is nothing but the partial Fourier coefficient
\[ g_{j,k}=\sum\limits_{s\in \{0,1,2,3\}^{J^c}}
\widehat A_{s\cup k(j)} \sigma_{s}=\tau_J(A\sigma_{k(j)}),\] 
where $\tau_J$ was defined in (\ref{eq:partial}) 
and
$\sigma_{k(j)}=
\sigma_0\otimes\cdots \otimes\sigma_k\otimes\cdots \otimes\sigma_0\in M_{2}(\mathbb C)^{\otimes n}$. Using these notations, we have
\[ W_{J,k}(A)=\sum_{j\in J} \norm{g_{j,k}}{2}^2 \ , \ W_{J}(A)=\sum_{k=1}^3\sum_{j\in J} \norm{g_{j,k}}{2}^2.\]

The proof of Lemma \ref{bmo} depends on the following three lemmas.

\begin{lem}\label{restrict}  Let $A\in M_2(\C)^{\otimes n}$ and $J\subseteq[n]$.
	For each $j\in J$ and $k\in\{1,2,3\}$,	
	$$	g_{j,k}=\tau_J(A \, \sigma_{k(j)})=\tau_J(d_j(A)   \, \sigma_{k(j)}).$$
\end{lem}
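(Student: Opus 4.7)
The lemma has two equalities, and my plan is to handle them separately, both using very standard facts about Pauli expansions and the structure of the conditional expectation $E_j$.

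For the first equality $g_{j,k} = \tau_J(A\,\sigma_{k(j)})$, I would just unfold definitions. The partial Fourier expansion $A = \sum_{t\in\{0,1,2,3\}^J} \widehat{A}_{t,J}\,\sigma_t$ combined with the orthonormality $\tau_J(\sigma_t \sigma_{t'}) = \delta_{t,t'}\mathbf{1}$ (with $\tau_J$ acting only on the $J$-factor) yields $\tau_J(A\,\sigma_{k(j)}) = \widehat{A}_{k(j),J}$. The expansion of this partial coefficient back in terms of the full Pauli indices is exactly the expression \eqref{e12} defining $g_{j,k}$. This part is essentially bookkeeping already set up in the Preliminaries subsection on partial Fourier series.

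For the second equality, the plan is to write $A = d_j(A) + E_j(A)$ using the identity $d_j = \mathbb{I} - E_j$ and then show that the $E_j(A)$ contribution vanishes:
\begin{equation*}
\tau_J\bigl(E_j(A)\,\sigma_{k(j)}\bigr) = 0 \qquad (k\in\{1,2,3\}).
\end{equation*}
The reason is structural: $E_j(A) = \tau_j(A) \otimes \mathbf{1}_j$ is trivial on the $j$-th tensor factor, whereas $\sigma_{k(j)}$ acts as the Pauli matrix $\sigma_k$ on that same factor and as identity elsewhere. Since $j \in J$, the partial trace $\tau_J$ includes a $\tau_{\{j\}}$ on that factor, and $\tau_{\{j\}}(\mathbf{1}_j\cdot\sigma_k) = \tfrac{1}{2}\mathrm{tr}(\sigma_k) = 0$ for $k=1,2,3$. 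So the whole partial trace collapses to zero, and subtracting this from $\tau_J(A\sigma_{k(j)})$ gives $\tau_J(d_j(A)\sigma_{k(j)})$.

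There is no real obstacle here; both equalities are immediate consequences of orthonormality of the Pauli basis under $\tau$ and the defining property of $E_j$ as the trace-replacement on site $j$. The only care needed is keeping track of the tensor-factor ordering when combining $\tau_J$ acting on $M_2(\mathbb{C})^J$ with the action of $\sigma_{k(j)}$ on the same tensor slots, but this is a one-line calculation on the $j$-th factor.
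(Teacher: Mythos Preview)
Your proposal is correct and follows essentially the same route as the paper's proof: both arguments write $A=d_j(A)+E_j(A)$ via $d_j=\mathbb{I}-E_j$ and then kill the $E_j(A)$ contribution by observing that the $j$-th factor of $E_j(A)\,\sigma_{k(j)}$ is $\mathbf{1}\cdot\sigma_k$, whose (normalized) trace vanishes for $k\in\{1,2,3\}$. The paper phrases this through $E_J$ and the module property of the conditional expectation, while you factor out the $j$-th slot of $\tau_J$ directly, but these are the same computation.
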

\begin{proof}
	Let $j\in J$ and $k\in\{1,2,3\}$.  By the identification
	$E_J(X)=\tau_J(X)\ten {\bf 1}_J$,
	it is equivalent to show
	\[ E_J(A\sigma_{k(j)})=E_J(d_j(A)\sigma_{k(j)}).\]
	Since $d_j=\mathbb{I}-E_j$,
	\begin{align*}
		E_J(d_j(A)\sigma_{k(j)})=E_J((\mathbb{I}-E_j)(A)\sigma_{k(j)})=E_J(A\sigma_{k(j)})-E_J(E_j(A)\sigma_{k(j)}).
	\end{align*}	
	Hence, it remains to show that $E_J(E_j(A)\sigma_{k(j)})=0$. Indeed,
	\[E_J(E_j(A)\sigma_{k(j)})=E_{J/{j}} \Big(E_j(E_j(A)\sigma_{k(j)})\Big)= E_{J/{j}} \Big(E_j(A) E_j(\sigma_{k(j)})\Big)=0,\]
	where we have used the module property of condition expectation $E(axb)=aE(x)b$ for $a,b$ in the range of $E$ and \[ E_j(\sigma_{k(j)})=\frac{1}{2}\mathrm{tr}_{\{j\}}(\sigma_k(j)){\bf 1}=0\]
	for $k\neq 0$. This completes the proof.
\end{proof}
\begin{lem}\label{L2} Let $A\in M_2(\C)^{\otimes n}$ with $\|A\|\leq1$ and $J\subseteq[n]$.
	For each $j\in J$ and $k\in\{1,2,3\}$,
	\begin{itemize}
		\item[(i)]  $\|g_{j,k}\|_2^2\leq \|g_{j,k}\|_1\leq\|g_{j,k}\|_2$; 
		\item [(ii)] $\|g_{j,k}\|_p^p\leq\mathrm{Inf}^p_j[A]$ for any $1\leq p\leq2$.
	\end{itemize}
\end{lem}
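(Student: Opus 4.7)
The plan is to deduce both (i) and (ii) from the partial-trace representation $g_{j,k}=\tau_J(d_j(A)\,\sigma_{k(j)})$ obtained in Lemma \ref{restrict}, together with two elementary facts: the normalized partial trace $\tau_J$ is a contraction on the normalized Schatten $p$-norm for every $1\leq p\leq\infty$ (because the associated conditional expectation $E_J=\tau_J\otimes \mathbf{1}_J$ preserves the trace), and $\sigma_{k(j)}$ is a unitary, so right multiplication by it is an isometry on every $L^p$.

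For part (ii), apply these two facts in sequence:
\begin{align*}
\|g_{j,k}\|_p \;=\; \|\tau_J(d_j(A)\,\sigma_{k(j)})\|_p \;\leq\; \|d_j(A)\,\sigma_{k(j)}\|_p \;=\; \|d_j(A)\|_p.
\end{align*}
Raising to the $p$-th power gives $\|g_{j,k}\|_p^p\leq\mathrm{Inf}_j^p[A]$, as required, for all $1\leq p\leq\infty$ (in particular for $1\leq p\leq 2$).

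For part (i), the upper bound $\|g_{j,k}\|_1\leq\|g_{j,k}\|_2$ is just the monotonicity of normalized Schatten norms via H\"older's inequality on the probability space $(M_2(\mathbb{C})^{\otimes n},\tau)$. For the lower bound $\|g_{j,k}\|_2^2\leq \|g_{j,k}\|_1$, first note that the case $p=\infty$ of the argument in the previous paragraph, together with $\|A\|\leq 1$ and $\|\sigma_{k(j)}\|=1$, yields
\begin{align*}
\|g_{j,k}\|_\infty \;\leq\; \|d_j(A)\|_\infty \;\leq\; \|\mathbb{I}-E_j\|_{\infty\to\infty}\|A\|_\infty \;\leq\; 2,
\end{align*}
but this constant is too big; to get $\leq 1$ one should instead use directly $\|g_{j,k}\|_\infty=\|\tau_J(A\,\sigma_{k(j)})\|_\infty\leq \|A\sigma_{k(j)}\|_\infty\leq \|A\|\,\|\sigma_{k(j)}\|\leq 1$, via the first equality in Lemma \ref{restrict}. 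Then the tracial H\"older inequality gives
\begin{align*}
\|g_{j,k}\|_2^2 \;=\; \tau(|g_{j,k}|^2) \;\leq\; \|g_{j,k}\|_\infty\,\tau(|g_{j,k}|) \;\leq\; \|g_{j,k}\|_1,
\end{align*}
completing (i).

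The only subtle point is the $L^\infty$ bound $\|g_{j,k}\|_\infty\leq 1$: it is crucial to invoke the \emph{first} identity of Lemma \ref{restrict}, $g_{j,k}=\tau_J(A\sigma_{k(j)})$, rather than the second (which would involve $d_j(A)$ and only give $\leq 2$). Once this is observed, there is no real obstacle; the entire lemma is a clean consequence of the $L^p$-contractivity of the normalized partial trace and the isometric action of Pauli unitaries.
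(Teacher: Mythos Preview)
Your proof is correct and follows essentially the same approach as the paper: both use the representation $g_{j,k}=\tau_J(A\sigma_{k(j)})=\tau_J(d_j(A)\sigma_{k(j)})$ from Lemma~\ref{restrict}, the $L^p$-contractivity of the partial trace/conditional expectation, and the unitarity of $\sigma_{k(j)}$. The one minor difference is that for part~(ii) the paper treats the endpoints $p=1$ and $p=2$ separately (the latter via the Kadison--Schwarz inequality) and then interpolates, whereas you invoke the $L^p$-contractivity of the conditional expectation $E_J$ directly for all $p$; your route is slightly more streamlined but not substantively different.
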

\begin{proof}
	(i) Since $\|A\|\leq1$, by Lemma \ref{restrict} and contractivity of conditional expectation $E_J$, one has
	$$\|g_{j,k}\|=\big\|E_J(A \, \sigma_{k(j)})\big\|\leq\|A \sigma_{k(j)}\|\leq \|A\|\leq1.$$
	It then follows from  H\"older's inequality that
	$$\|g_{j,k}\|_2^2\leq\|g_{j,k}\| \, \|g_{j,k}\|_1\leq\|g_{j,k}\|_1\le \|g_{j,k}\|_2.$$
	
	(ii) Fix $j\in J$ and $k\in\{1,2,3\}$. 
		By Lemma \ref{restrict} and the Cauchy-Schwarz inequality, one has
		\begin{align*}
			\|g_{j,k}\|_2^2
			= &\  \tau_{J^c}\Big(\Big|\tau_J\big(d_j(A) \sigma_{k(j)}\big)\Big|^2\Big)\\
			\le &\  \tau_{J^c}\tau_J\big(\big|d_j(A) \sigma_{k(j)}\big|^2\big)
			= \tau( |d_j(A)  \, \sigma_{k(j)}|^2) \leq\|d_j(A)\|_2^2,
		\end{align*}	
		where the first inequality used the Kadison-Schwarz inequality $\Phi(A^*A)\ge \Phi(A)^*\Phi(A)$ for completely positive unital map $\Phi=\tau_J$, and
		the last equality follows because the complemented (normalized) `partial traces' combine to the (normalized) total trace $\tau_J\tau_{J^c}=\tau_{[n]}$.
		
		
		The above argument also works for $\|g_{j,k}\|_1$. Indeed,
		by Lemma \ref{restrict} and H\"older's inequality, we have
		$$\|g_{j,k}\|_1=\tau_{J^c}\Big(\Big|\tau_{J}\big(d_j(A) \,  \sigma_{k(j)}\big)\Big|\Big)\leq\tau(|d_j(A)  \,  \sigma_{k(j)}|)\leq\|d_j(A)\|_1.$$
		The case $1<p<2$ can be obtained by complex interpolation.
		\end{proof}
		\begin{lem}\label{inter}
			$\sum_{k=1}^3\sum_{j\in J}\mathrm{Inf}[
			g_{j,k}]\leq\mathrm{Inf}[A]$.
		\end{lem}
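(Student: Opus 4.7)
The plan is to compute $\mathrm{Inf}[g_{j,k}]$ directly from the Fourier expansion of $g_{j,k}$ and then reindex the double sum over $k\in\{1,2,3\}$ and $j\in J$ as a sum over Pauli indices $v\in\{0,1,2,3\}^n$ with $|\mathrm{supp}\,v\cap J|=1$. Comparison with the Fourier identity \eqref{rwz25} for $\mathrm{Inf}[A]$ then yields the bound.

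More concretely, by definition $g_{j,k}\in M_2(\C)^{\otimes J^c}$ has Fourier expansion
\[ g_{j,k}=\sum_{s\in\{0,1,2,3\}^{J^c}}\widehat{A}_{s\cup k(j)}\,\sigma_s. \]
Applying the analogue of \eqref{rwz25} inside $M_2(\C)^{\otimes J^c}$ (i.e.\ with the derivations $d_i$ for $i\in J^c$) gives
\[ \mathrm{Inf}[g_{j,k}]=\sum_{s\in\{0,1,2,3\}^{J^c}}|\mathrm{supp}\,s|\,|\widehat{A}_{s\cup k(j)}|^2. \]
Now sum over $j\in J$ and $k\in\{1,2,3\}$ and reindex by $v=s\cup k(j)$. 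As $j$ ranges over $J$ and $k$ over $\{1,2,3\}$, $k(j)$ ranges over all indices in $\{0,1,2,3\}^{J}$ of support size exactly $1$; together with $s\in\{0,1,2,3\}^{J^c}$, the pair $(s,k(j))$ parametrizes precisely those $v\in\{0,1,2,3\}^n$ with $|\mathrm{supp}\,v\cap J|=1$. Since $\mathrm{supp}\,s\subseteq J^c$ is disjoint from $\mathrm{supp}\,k(j)=\{j\}\subseteq J$, we have $|\mathrm{supp}\,s|=|\mathrm{supp}\,v|-1$.

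Consequently,
\[
\sum_{k=1}^{3}\sum_{j\in J}\mathrm{Inf}[g_{j,k}]
=\sum_{\substack{v\in\{0,1,2,3\}^n\\ |\mathrm{supp}\,v\cap J|=1}}(|\mathrm{supp}\,v|-1)\,|\widehat{A}_v|^2
\le \sum_{v\in\{0,1,2,3\}^n}|\mathrm{supp}\,v|\,|\widehat{A}_v|^2=\mathrm{Inf}[A],
\]
where the last equality is \eqref{rwz25}. This is the claimed inequality. There is no substantive obstacle here: the argument is purely a bookkeeping computation on Pauli indices, with the only point of care being the identification of $\mathrm{supp}\,v$ as the disjoint union of $\mathrm{supp}\,s$ and $\{j\}$, which is what turns $|\mathrm{supp}\,s|$ into $|\mathrm{supp}\,v|-1$ and allows comparison with $\mathrm{Inf}[A]$.
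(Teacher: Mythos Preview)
Your proof is correct and is essentially the same Fourier bookkeeping as the paper's argument, just organized more directly: where the paper fixes $i\in J^c$, proves $\sum_{k,j}\mathrm{Inf}_i[g_{j,k}]\le \mathrm{Inf}_i[A]$ via Parseval, and then sums over $i$, you apply \eqref{rwz25} to each $g_{j,k}$ at once and reindex. Your route even yields the exact identity $\sum_{k,j}\mathrm{Inf}[g_{j,k}]=\sum_{|\mathrm{supp}\,v\cap J|=1}(|\mathrm{supp}\,v|-1)|\widehat A_v|^2$, which is a slightly sharper intermediate statement than the paper records.
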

		\begin{proof}Since $ g_{j,k}=\tau_J(A\sigma_{k(j)})\in M_2(\mathbb{C})^{J^c}$,
			$d_i(g_{j,k})=0$ for $i \in J$. Hence \[	\mathrm{Inf}_i[g_{j,k}] = \| d_i(g_{j,k}) \|_2^2 = 0 \ , \ \mathrm{Inf}[g_{j,k}]=\sum_{i\in J^c} \mathrm{Inf}_i[g_{j,k}]\ .\] It suffices to show for every $i\in J^c$
			\begin{align}\label{pare}
				\sum_{k=1}^3\sum_{j\in J}\mathrm{Inf}_i[
				g_{j,k}]\leq	\mathrm{Inf}_i[A].
			\end{align}	
			Then it follows from \eqref{pare} and Fubini's theorem that
			$$\mathrm{Inf}[A]\geq\sum_{i\in J^c}\mathrm{Inf}_i[A]\geq\sum_{i\in J^c}\sum_{k=1}^3\sum_{j\in J}\mathrm{Inf}_i[
			g_{j,k}]=\sum_{k=1}^3\sum_{j\in J}\mathrm{Inf}[
			g_{j,k}].$$
			To prove \eqref{pare}, fix $i\in J^c$. By Parseval's identity, one has
			$$\mathrm{Inf}_i[A]=\sum_{\substack{s\in \{0,1,2,3\}^{J^c}\\s_i\neq0}}\sum_{t\in \{0,1,2,3\}^{J}}|\widehat A_{s\cup t}|^2,$$
			which implies that
			\begin{align*}
				\mathrm{Inf}_i[A]\geq &\  \sum_{\substack{s\in \{0,1,2,3\}^{J^c}\\s_i\neq0}}\sum_{\substack{t\in \{0,1,2,3\}^{J}\\|\mathrm{supp} \, t|=1}}|\widehat A_{s\cup t}|^2\\
				= &\ \sum_{\substack{s\in \{0,1,2,3\}^{J^c}\\s_i\neq0}}\sum_{k=1}^3\sum_{j\in J}|\widehat A_{s\cup k(j)}|^2\\
				=&\ \sum_{k=1}^3\sum_{j\in J}\sum_{\substack{s\in \{0,1,2,3\}^{J^c}\\s_i\neq0}}|\widehat A_{s\cup k(j)}|^2=\sum_{k=1}^3\sum_{j\in J}\|d_i(
				g_{j,k})\|_2^2.
			\end{align*}
			%
			This completes the proof.
		\end{proof}
		
		We are ready to prove Lemma \ref{bmo}.
		\begin{proof}[Proof of Lemma \ref{bmo}.]
			Note that $\|g_{j,k}\|\leq1$.   Then by Lemma \ref{inter} and Lemma \ref{log-sobo}, we have that 
			\begin{align}\mathrm{Inf}[A]\geq\sum_{k=1}^3\sum_{j\in J}\mathrm{Inf}[g_{j,k}]\geq\sum_{k=1}^3\sum_{j\in J}\Big(\frac{1}{2}\norm{g_{j,k}}{2}^2\log\frac{1}{ \norm{g_{j,k}}{2}^2}-\norm{g_{j,k}}{1}^{1/2}\norm{g_{j,k}}{2}\Big). \label{eq:later}
			\end{align}
			It further follows from Cauchy-Schwarz inequality that
			\begin{align*}
				\mathrm{Inf}[A]
				\geq &\  \frac{1}{2}\sum_{k=1}^3\sum_{j\in J}\norm{g_{j,k}}{2}^2\log\frac{1}{ \norm{g_{j,k}}{2}^2}-\sqrt{\sum_{k=1}^3\sum_{j\in J}\norm{g_{j,k}}{1}}\sqrt{\sum_{k=1}^3\sum_{j\in J}\norm{g_{j,k}}{2}^2}.
			\end{align*}
			By Lemma \ref{L2} (ii), $\sum_{j\in J}\norm{g_{j,k}}{1}\leq\mathrm{Inf}^1[A]$ for all $k=1,2,3$.  On the other hand, recall that $W_{J}(A)=\sum_{k=1}^3\sum_{j\in J}{\norm{g_{j,k}}{2}^2}$. Therefore,
			$$\mathrm{Inf}[A]\geq\frac{1}{2}\sum_{k=1}^3\sum_{j\in J}{\norm{g_{j,k}}{2}^2}\log\frac{1}{ \norm{g_{j,k}}{2}^2}-\sqrt{3}\sqrt{\mathrm{Inf}^1[A]}\sqrt{W_{J}(A)}.$$
			To estimate the first term on the right-hand side above,
			we conclude from Lemma \ref{L2} (ii) and Lemma \ref{was21}  that for each  $j\in J$ and $k\in\{1,2,3\}$
			$${\norm{g_{j,k}}{2}^2}\leq\mathrm{Inf}_j[A]\leq2 \mathrm{Inf}^1_j[A].$$
			We finally find
			$$\mathrm{Inf}[A]\geq \frac{1}{2}\log\Big(\frac{1}{\max_{j\in J}2\mathrm{Inf}_j^1[A]}\Big)\cdot W_{J}(A)-\sqrt{3}\sqrt{\mathrm{Inf}^1 [A]}\cdot\sqrt{W_{J}(A)}.$$
			This finishes the proof.
		\end{proof}

\subsection{Soft Chunks}
Another important tool in proving Theorem \ref{thm:main1} is the chunk operator, which is defined as the difference of two noise operators. 
\begin{defi}\label{soft}
	Let $A\in M_2(\C)^{\otimes n}$ with $\|A\|\leq1$ and let $d=2^l$ with some non-negative integer $l\in \mathbb{N}_0$. The {\em soft chunk} of $A$ of degree $d$ is the operator $B_d\in M_2(\C)^{\otimes n}$ given by
	$$B_d=\big(T_{1-\frac{1}{3d}}-T_{1-\frac{1}{2d}}\big)A.$$
\end{defi}
We summarize the following useful properties of soft chunks.
\begin{lem}\label{keylemma}
	Let $A\in M_2(\C)^{\otimes n}$ with $\|A\|\leq1$, $j\in[n]$ and let $d=2^l$ with some non-negative integer $l\in \mathbb{N}_0$. Let $B_d\in M_2(\C)^{\otimes n}$ be the soft chunk operator of $A$. Then the following statements hold.
	\begin{itemize}
		\item[(i)]  $\|B_d\|\leq2$ %
		and $\mathrm{Inf}_j^p[B_d]\leq2^p\mathrm{Inf}_j^p[A]$ for any $1\leq p<\infty$. As a consequence, $\mathrm{Inf}^p[B_d]\leq2^p\mathrm{Inf}^p[A]$ for any $1\leq p<\infty$.
		\item[(ii)] For any $s\in \{0,1,2,3\}^{n}$ with $d\leq|\mathrm{supp} \, s|<2d$, we have $\frac{1}{20}|\widehat A_s|\leq|\widehat{(B_d)}_s|\leq|\widehat A_s|$. As a consequence, $\frac{1}{400}W_{\approx d}[A]\leq W_{\approx d}[B_d]\leq W_{\approx d}[A]$.
		\item[(iii)] $ \sum_{d}\mathrm{Inf}_j[B_d]\leq\mathrm{Inf}_j[A]$. As a consequence, $\sum_d\mathrm{Inf}[B_d]\leq\mathrm{Inf}[A]$ where the $d$ is summing over all non-negative powers of $2$. 
	\end{itemize}
\end{lem}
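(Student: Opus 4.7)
The plan is to handle the three parts using that each noise operator $T_\delta$ is the Fourier multiplier $T_\delta\sigma_s=\delta^{|\mathrm{supp}\,s|}\sigma_s$, which commutes with every derivative $d_j$ by the intertwining relation and is an $L^p$-contraction by \eqref{eq:conctract}. For brevity set $u_d=1-\tfrac{1}{3d}$, $v_d=1-\tfrac{1}{2d}$, and $\lambda_{d,k}=u_d^k-v_d^k$, so that $B_d$ is the Fourier multiplier $\sigma_s\mapsto \lambda_{d,|\mathrm{supp}\,s|}\sigma_s$.

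Part (i) is immediate: $\|B_d\|\leq \|T_{u_d}A\|+\|T_{v_d}A\|\leq 2\|A\|\leq 2$ by the triangle inequality and contractivity. For the influence, intertwining gives $d_j(B_d)=(T_{u_d}-T_{v_d})d_j(A)$, and a second use of triangle inequality and Schatten contractivity yields $\|d_j(B_d)\|_p\leq 2\|d_j(A)\|_p$, whence $\mathrm{Inf}_j^p[B_d]\leq 2^p\mathrm{Inf}_j^p[A]$; summing over $j$ gives the total influence bound.

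For (ii), the upper bound is immediate since $\lambda_{d,k}\leq u_d^k\leq 1$. For the lower bound in the dyadic range $d\leq k<2d$, I would use the mean-value form
\[\lambda_{d,k}=k\int_{1/(3d)}^{1/(2d)}(1-t)^{k-1}\,dt\geq \frac{k}{6d}\left(1-\tfrac{1}{2d}\right)^{k-1}.\]
The first factor is at least $1/6$ since $k\geq d$, and the second satisfies $(1-1/(2d))^{k-1}\geq (1-1/(2d))^{2d-1}\geq 1/4$ uniformly in $d\geq 1$ (a short calculus check, the infimum being attained as $d\to\infty$ where one recovers $e^{-1}$). The product exceeds $1/24$ with room to spare, yielding the stated $1/20$. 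The two-sided $W_{\approx d}$ comparison then follows by squaring $\lambda_{d,k}$ and summing over the relevant $s$.

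The main work, and the main obstacle, is part (iii). By intertwining,
\[\sum_{d=2^l}\mathrm{Inf}_j[B_d]=\sum_{s:\,s_j\neq 0}|\widehat A_s|^2\sum_{l\geq 0}\lambda_{2^l,|\mathrm{supp}\,s|}^2,\]
so the claim reduces to the uniform-in-$k$ numerical bound $\sum_{l\geq 0}\lambda_{2^l,k}^2\leq 1$ for every integer $k\geq 1$. I would split the sum at $2^l\approx k$ and use two different estimates. In the regime $2^l\geq k$ I use the Lipschitz bound $\lambda_{d,k}\leq k/(6d)$ coming from $(1-a)^k-(1-b)^k\leq k(b-a)$; this contributes a geometric tail bounded by $\tfrac{4}{3}\cdot\tfrac{1}{36}=\tfrac{1}{27}$. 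In the regime $2^l<k$ I use the sharper $\lambda_{d,k}\leq u_d^k\leq e^{-k/(3d)}$; since for dyadic $d=2^l<k$ the exponents $k/(3\cdot 2^l)$ at least double as $l$ decreases, the squares $e^{-2k/(3\cdot 2^l)}$ form a super-geometric sequence whose total is dominated by the universal constant $\sum_{j\geq 0}e^{-2^{j+1}/3}$, which is comfortably less than $1-\tfrac{1}{27}$. Combining gives the desired inequality, and summing over $j$ finishes (iii). The main obstacle is keeping both regime contributions under control simultaneously so that their sum stays below $1$ rather than merely $O(1)$; with the indicated splitting this is routine but requires a little bookkeeping.
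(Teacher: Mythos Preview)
Your argument is essentially correct, but parts (ii) and (iii) take a different route from the paper, and there is a small arithmetic slip in (ii).

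In (ii) you claim the product $\tfrac{k}{6d}\cdot(1-\tfrac{1}{2d})^{k-1}$ exceeds $1/24$ ``with room to spare, yielding the stated $1/20$''. But $1/24<1/20$; you need to actually use the sharper bound you yourself mention, namely $(1-\tfrac{1}{2d})^{2d-1}\geq e^{-1}$, which gives $\lambda_{d,k}\geq \tfrac{1}{6e}>\tfrac{1}{20}$. The paper argues differently: it shows the ratio $\big(\tfrac{1-1/(3d)}{1-1/(2d)}\big)^k\geq \tfrac{2}{\sqrt{3}}$ on the dyadic range, so $\lambda_{d,k}\geq (1-\tfrac{\sqrt{3}}{2})(1-\tfrac{1}{3d})^{2d}\geq \tfrac{4}{9}(1-\tfrac{\sqrt{3}}{2})>\tfrac{1}{20}$. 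Either method is fine.

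The more interesting contrast is in (iii). Your regime split works, but the paper's argument is a one-liner and avoids all the bookkeeping you flagged as the ``main obstacle'': since $0\leq\lambda_{2^l,k}\leq 1$ one has $\lambda_{2^l,k}^2\leq \lambda_{2^l,k}$, and the intervals $[1-\tfrac{1}{2\cdot 2^l},\,1-\tfrac{1}{3\cdot 2^l}]$ for $l=0,1,2,\dots$ are pairwise disjoint subintervals of $[1/2,1)$ (because $1-\tfrac{1}{3\cdot 2^l}\leq 1-\tfrac{1}{2\cdot 2^{l+1}}$). Hence
\[
\sum_{l\geq 0}\lambda_{2^l,k}^{2}\ \leq\ \sum_{l\geq 0}\lambda_{2^l,k}\ =\ \sum_{l\geq 0}\Big[(1-\tfrac{1}{3\cdot 2^l})^{k}-(1-\tfrac{1}{2\cdot 2^l})^{k}\Big]\ \leq\ 1^{k}-(\tfrac{1}{2})^{k}\ \leq\ 1,
\]
since the middle sum is the total increment of $x\mapsto x^k$ over disjoint subintervals. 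This buys you the exact constant $1$ without any numerics; your approach instead yields two explicit pieces that one must add and check fall below $1$.
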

\begin{proof} 
	(i) The boundedness of $B_d$ follows from contractivity of the noise operator $T_\delta$ and $\|A\|\leq1$. By the intertwining relation $T_\delta d_j=d_j T_\delta $,
	\begin{align*}
		\mathrm{Inf}_j^p[B_d]
		\leq &\  \Big(\|d_jT_{1-\frac{1}{3d}}(A)\|_p+\|d_jT_{1-\frac{1}{2d}}(A)\|_p\Big)^p\\
		= &\ \Big(\|T_{1-\frac{1}{3d}}d_j(A)\|_p+\|T_{1-\frac{1}{2d}}d_j(A)\|_p\Big)^p\leq2^p\mathrm{Inf}_j^p[A].
	\end{align*}	
	
	(ii) Note that by definition
	\begin{align}\label{up}
		\widehat{(B_d)}_s=\Big(\Big(1-\frac{1}{3d}\Big)^{|\mathrm{supp} \, s|}-\Big(1-\frac{1}{2d}\Big)^{|\mathrm{supp} \, s|}\Big)\widehat A_s.
	\end{align}
	Clearly, $|\widehat{(B_d)}_s|\leq|\widehat A_s|$. On the other hand,  set $\varphi(t)=\big(1-1/(3d)\big)^t-\big(1-1/(2d)\big)^t$ with $t\in[d,2d)$. Since $x\mapsto (1+1/x)^x$ is increasing on $(0,\infty)$, one has for any $t\in[d,2d)$,
	$$\frac{(1-1/(3d))^t}{(1-1/(2d))^t}\geq\Big(\frac{6d-2}{6d-3}\Big)^d\geq \Big(1+\frac{1}{6d-3}\Big)^{\frac{6d-3}{6}}\geq\frac{2\sqrt{3}}{3},$$
	which implies that
	$$\varphi(t)\geq(1-\frac{\sqrt{3}}{2})\Big(1-\frac{1}{3d}\Big)^t>(1-\frac{\sqrt{3}}{2})\Big(1-\frac{1}{3d}\Big)^{2d}\geq\frac{4}{9}(1-\frac{\sqrt{3}}{2})>\frac{1}{20}.$$
	
	
	(iii) 
	By Parseval's identity and Fubini's theorem, for each $j\in [n]$,
	\begin{align*}
		\sum_{d=2^l, l\in \mathbb{N}_0}\mathrm{Inf}_j[B_d]
		= &\  \sum_{d=2^l, l\in \mathbb{N}_0}\sum_{\substack{s\in \{0,1,2,3\}^{n}\\s_j\neq0}}|\widehat{(B_d)}_s|^2\\
		= &\ \sum_{\substack{s\in \{0,1,2,3\}^{n}\\s_j\neq0}}\sum_{d=2^l, l\in \mathbb{N}_0}\Big(\Big(1-\frac{1}{3d}\Big)^{|\mathrm{supp} \, s|}-\Big(1-\frac{1}{2d}\Big)^{|\mathrm{supp} \, s|}\Big)^2|\widehat A_s|^2.
	\end{align*}
	As $d$ ranges only over powers of 2,
	\begin{align*}&\sum_{l\in \mathbb{N}_0}\Big(\Big(1-\frac{1}{3\cdot 2^l}\Big)^{|\mathrm{supp} \, s|}-\Big(1-\frac{1}{2\cdot 2^l}\Big)^{|\mathrm{supp} \, s|}\Big)^2\\ \leq &\sum_{l\in \mathbb{N}_0}\Big(\Big(1-\frac{1}{3\cdot 2^l}\Big)^{|\mathrm{supp} \, s|}-\Big(1-\frac{1}{2\cdot 2^l}\Big)^{|\mathrm{supp} \, s|}\Big)\le\Big(\frac{2}{3}\Big)^{ |\mathrm{supp} \, s|}\le 1.\end{align*}
	This finishes the proof.
\end{proof}

\begin{lem}\label{po}
	Let $A\in M_2(\C)^{\otimes n}$ with $\|A\|\leq1$. Define
	$$D_{\mathrm{good}}=\Big\{d=2^l,l\in\N_0\ \big|\  W_{\approx d}[A]\geq\frac{\mathrm{Var}[A]^2}{16\mathrm{Inf}^1[A]}\Big\}.$$
	Then 
	$$\mathrm{Var}[A]\leq 4\sum_{d\in D_{\mathrm{good}}}W_{\approx d}[A].$$
\end{lem}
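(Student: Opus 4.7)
The plan is to split the dyadic decomposition $\mathrm{Var}[A]=\sum_{l\geq 0}W_{\approx 2^l}[A]$ into good and bad blocks and show that the bad piece accounts for at most $\tfrac{3}{4}\mathrm{Var}[A]$, so that the good piece carries at least $\tfrac14\mathrm{Var}[A]$.

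The key input is a pair of elementary upper bounds on $W_{\approx d}[A]$ at each power $d=2^l$. By definition of $D_{\mathrm{bad}}$ one has $W_{\approx d}[A]<T$ for $d\in D_{\mathrm{bad}}$, where $T:=\mathrm{Var}[A]^2/(16\,\mathrm{Inf}^1[A])$. On the other hand, the identity $\mathrm{Inf}[A]=\sum_k k\,W_{=k}[A]$ from \eqref{rwz25} yields $d\,W_{\approx d}[A]\leq\mathrm{Inf}[A]$, and Lemma \ref{was21} with $p=1$ gives $\mathrm{Inf}[A]\leq 2\mathrm{Inf}^1[A]$. Combining,
$$W_{\approx d}[A]\;\leq\;\min\!\left(T,\ \frac{2\,\mathrm{Inf}^1[A]}{d}\right)\qquad\text{whenever }d\in D_{\mathrm{bad}}.$$

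I would then sum these minima dyadically, splitting at the crossing index $L=\lfloor\log_2(2\mathrm{Inf}^1[A]/T)\rfloor$ where the two upper bounds coincide. The flat regime $l\leq L$ contributes at most $(L+1)T$, while the geometric tail $l>L$ sums to at most $2T$, so $\sum_{d\in D_{\mathrm{bad}}}W_{\approx d}[A]\leq(L+3)T$. Setting $r:=\mathrm{Inf}^1[A]/\mathrm{Var}[A]$, a direct computation gives $L+3\leq 8+2\log_2 r$ and $T=\mathrm{Var}[A]/(16r)$, so the bad mass is at most $(4+\log_2 r)\mathrm{Var}[A]/(8r)$.

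To close, I would invoke Poincar\'e's inequality \eqref{eq:poincare2} together with Lemma \ref{was21} to get $\mathrm{Var}[A]\leq\mathrm{Inf}[A]\leq 2\mathrm{Inf}^1[A]$, i.e.\ $r\geq 1/2$. A short monotonicity check then confirms $(4+\log_2 r)/(8r)\leq 3/4$ on $[1/2,\infty)$, with equality at $r=1/2$, yielding $\sum_{d\in D_{\mathrm{bad}}}W_{\approx d}[A]\leq\tfrac34\mathrm{Var}[A]$ and hence the claim. The delicate point is the balancing: neither the threshold bound $W_{\approx d}[A]<T$ nor the moment bound $d\,W_{\approx d}[A]\leq 2\mathrm{Inf}^1[A]$ is strong enough alone, but their pointwise minimum, summed geometrically, just barely closes under the lower bound $r\geq 1/2$ supplied by Poincar\'e.
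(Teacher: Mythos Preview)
Your proposal is correct and follows essentially the same approach as the paper: both show the bad blocks contribute at most $\tfrac34\mathrm{Var}[A]$ by combining the threshold bound $W_{\approx d}[A]<T$ on low scales with the moment bound $d\,W_{\approx d}[A]\le\mathrm{Inf}[A]\le 2\,\mathrm{Inf}^1[A]$ on high scales, closing via $r\ge 1/2$ from Poincar\'e and Lemma~\ref{was21}. The only difference is bookkeeping---the paper places the cutoff at $d\le M:=4\,\mathrm{Inf}^1[A]/\mathrm{Var}[A]$ and crudely bounds the number of dyadic scales below $M$ by $M$ itself to get $\tfrac14+\tfrac12=\tfrac34$ directly, whereas you sum the dyadic minimum more carefully and verify the same $\tfrac34$ via the function $(4+\log_2 r)/(8r)$.
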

\begin{proof}. Since $\mathrm{Var}[A]=\sum_{s\neq 0}|\widehat A_s|^2$, we have
	$$\mathrm{Var}[A]=\sum_{d=2^l,l\in\N_0} W_{\approx d}[A]=\sum_{d\in D_{\mathrm{good}}}W_{\approx d}[A]+\sum_{d\notin D_{\mathrm{good}}}W_{\approx d}[A].$$
	Hence, it is enough to show there is a constant $0<K<1$ such that
	\begin{align}\label{u}
		\sum_{d\notin D_{\mathrm{good}}}W_{\approx d}[A]\leq K \, \mathrm{Var}[A].
	\end{align}
	All $d$'s below are powers of $2$.
	To show \eqref{u}, we consider two cases: if $d\leq M:=\frac{4\mathrm{Inf}^1[A]}{\mathrm{Var}[A]}$, it follows from $d\notin D_{\mathrm{good}}$ that
	\begin{align}\label{u1}
		\sum_{d:d\notin D_{\mathrm{good}},d\leq M}W_{\approx d}[A]\leq\sum_{d:d\notin D_{\mathrm{good}},d\leq M}
		\frac{\mathrm{Var}[A]^2}{16\mathrm{Inf}^1[A]}\le \frac{4\mathrm{Inf}^1[A]}{\mathrm{Var}[A]}\cdot \frac{\mathrm{Var}[A]^2}{16\mathrm{Inf}^1[A]}=\frac{1}{4}\mathrm{Var}[A].\end{align}
	For the case $d\geq M$, since by (\ref{rwz25}) we have
	$$\mathrm{Inf}[A]\geq\sum_{d=2^l\geq M}\sum_{d\leq|\mathrm{supp} \, s|<2d}|\mathrm{supp} \, s||\widehat A_s|^2\geq M\sum_{d=2^l\geq M}W_{\approx d}[A]$$
	and using Lemma \ref{was21}
	we conclude that
	\begin{align}\label{u2}
		\sum_{d\geq M}W_{\approx d}[A]\leq\frac{\mathrm{Inf}[A]}{M}=\mathrm{Var}[A]\frac{\mathrm{Inf}[A]}{4\mathrm{Inf}^1[A]}\leq\frac{1}{2}\mathrm{Var}[A].
	\end{align}
	Combining (\ref{u1}) with (\ref{u2}), we obtain (\ref{u}) with $K=\frac{3}{4}$.
	This completes the proof.
\end{proof}

We shall now prove Theorem \ref{thm:main1}.
\begin{proof}[Proof of Theorem \ref{thm:main1}.]
We argue  by contradiction.   Let $C=180000$. Assume that
\begin{align}\label{e121}
\mathrm{Inf}_j^1[A]\leq2^{-C\frac{\mathrm{Inf}^1[A]}{\mathrm{Var}[A]}},
\end{align}  for all coordinates $j\in[n]$.

Let $B_d$ be the soft chunk of $A$ of degree $d$  with $d\in D_{\mathrm{good}}$. 
By Lemma \ref{keylemma} (i),  $\|B_d\|\leq2$.
Then   applying Lemma \ref{bmo} to $B_d/2$, we see that for any subset $J\subseteq[n]$ 
\begin{align}\label{e11}
\mathrm{Inf}[B_d]\geq\frac{1}{2} \log\Big(\frac{1}{\max_{j\in J}\mathrm{Inf}_j^1[B_d]}\Big)\cdot W_J(B_d)-\sqrt{6}\sqrt{\mathrm{Inf}^1 [B_d]}\cdot\sqrt{W_J(B_d)}
\end{align}
with $W_J(B_d)$ given by
$$W_J(B_d)=\sum_{\substack{v\in\{0,1,2,3\}^{n}\\ |\mathrm{supp}\, v\cap J|=1}} |\widehat {(B_d)}_{v}|^2.$$
In particular, for any $\frac{1}{2d}$-random subset $J$ of $[n]$, we have  from
Lemma \ref{first level} that 
\begin{align*}
\mathbb	E_J[W_J(B_d)]
	= &\  \sum_{v\in \{0,1,2,3\}^{n}} |\widehat {(B_d)}_v|^2  \, \mathrm{Pr}[|J\cap \mathrm{supp} \, v|=1]\\
	\geq &\ \sum_{d\leq|\mathrm{supp} \, v|<2d} | \widehat {(B_d)}_v|^2  \, \mathrm{Pr}[|J\cap \mathrm{supp} \, v|=1].
\end{align*}
As we showed in Lemma \ref{first level}, 
\begin{align}\label{e19}
\mathrm{Pr}[|J\cap \mathrm{supp} \, v|=1]=\frac{|\mathrm{supp} \, v|}{2d}(1-\frac{1}{2d})^{|\mathrm{supp} \, v|-1}
\end{align}
whenever $d\leq|\mathrm{supp} \, v|<2d$.
For such $d$ we have
$$\frac{|\mathrm{supp} \, v|}{2d}(1-\frac{1}{2d})^{|\mathrm{supp} \, v|-1} \geq \frac{1}{2}(1-\frac{1}{2d})^{2d-1} \geq \frac{1}{2e}\geq \frac{1}{6}.$$ 
Hence,
$$\mathbb E_J[W_J(B_d)]\geq\frac{1}{6} W_{\approx d}[B_d]\geq\frac{1}{2400} W_{\approx d}[A]\geq\frac{\mathrm{Var}[A]^2}{38400\mathrm{Inf}^1[A]},$$
where 
the second inequality is a consequence of Lemma \ref{keylemma} (ii), and the last inequality is due to $d\in D_{\mathrm{good}}$.

In the rest, we fix some $J\subseteq[n]$ such that
\begin{align}\label{e}
	W_J(B_d)\geq\frac{1}{2400} W_{\approx d}[A]\geq\frac{\mathrm{Var}[A]^2}{38400\mathrm{Inf}^1[A]}.
\end{align}

By Lemma \ref{keylemma} (i) and the assumption (\ref{e121}), one can deduce from (\ref{e11}) that
\begin{align}\label{eq:est}
	\mathrm{Inf}[B_d]
	\geq &\  \frac{1}{2}\log\Big(\frac{1}{\max_{j\in J}2\mathrm{Inf}_j^1[A]}\Big)\cdot W_J(B_d)-2\sqrt{3}\sqrt{\mathrm{Inf}^1 [A]}\cdot\sqrt{W_J(B_d)}\nonumber\\
	\geq &\ \frac{1}{4}\Big(C  \,  \frac{\mathrm{Inf}^1[A]}{\mathrm{Var}[A]}-1\Big)\cdot W_J(B_d)-2\sqrt{3}\sqrt{\mathrm{Inf}^1 [A]}\cdot\sqrt{W_J(B_d)}.
\end{align}
By (\ref{eq:poincare2}) and Lemma \ref{was21}, $\mathrm{Var}[A]\leq\mathrm{Inf}[A]\leq2\mathrm{Inf}^1[A]$. This implies by noting $C=180000$ that
$$\frac{1}{4}\Big(C  \,  \frac{\mathrm{Inf}^1[A]}{\mathrm{Var}[A]}-1\Big)\geq40000\frac{\mathrm{Inf}^1[A]}{\mathrm{Var}[A]}$$
and substituting this into (\ref{eq:est}), one gets
\begin{align}\label{eq:est2}
	\mathrm{Inf}[B_d]
	\geq  40000\frac{\mathrm{Inf}^1[A]}{\mathrm{Var}[A]}\cdot W_J(B_d)-2\sqrt{3}\sqrt{\mathrm{Inf}^1 [A]}\cdot\sqrt{W_J(B_d)}.
\end{align}
Combining with the second inequality in (\ref{e}), we have
\begin{align*}
	\mathrm{Inf}[B_d]
	\geq &\ \frac{\mathrm{Inf}^1[A]}{\mathrm{Var}[A]} W_J(B_d)\Big(40000-2\sqrt{3}\frac{\mathrm{Var}[A]}{\sqrt{\mathrm{Inf}^1 [A]}\cdot\sqrt{W_J(B_d)}}\Big), \\ \ge & \frac{\mathrm{Inf}^1[A]}{\mathrm{Var}[A]} W_J(B_d)(40000-\sqrt{460800})\\ >&36000\frac{\mathrm{Inf}^1[A]}{\mathrm{Var}[A]} W_J(B_d).
\end{align*}
Now we use the first inequality stated in (\ref{e}) to get
$$\mathrm{Inf}[B_d]\geq \,  36000\frac{\mathrm{Inf}^1[A]}{\mathrm{Var}[A]}\cdot W_J(B_d)\geq \,  15 \frac{\mathrm{Inf}^1[A]}{\mathrm{Var}[A]}\cdot W_{\approx d}[A].$$
By applying Lemma \ref{was21}, Lemma \ref{keylemma} (iii) and summing over $d\in D_{\mathrm{good}}$, we find
\begin{align*}\mathrm{Inf}^1[A]\geq\frac{1}{2}\mathrm{Inf}[A]\geq \frac{1}{2}\sum_{d\in D_{\mathrm{good}}}\mathrm{Inf}[B_d]&\geq  \,  \frac{15}{2}\frac{\mathrm{Inf}^1[A]}{\mathrm{Var}[A]}\sum_{d\in D_{\mathrm{good}}}W_{\approx d}[A],
	\\ &\geq \frac{15}{8}\frac{\mathrm{Inf}^1[A]}{\mathrm{Var}[A]} \, \mathrm{Var}[A]=\frac{15}{8}\mathrm{Inf}^1[A],
\end{align*}
where the last inequality follows from
Lemma \ref{po}. This gives a contradiction. Therefore, there exists $j\in[n]$ such that (\ref{e121}) fails, which completes the proof.
\end{proof}
Theorem \ref{thm:main1} reproduces quantum KKL theorem for $L^1$-influence as a corollary.
\begin{cor}
\label{main2}
	Let $A\in M_2(\C)^{\otimes n}$ with $\|A\|\leq1$. Then there exist an absolute constant $C>0$  such that one of the following two conclusions holds:
\begin{itemize}
\item[i)] there exists $j\in [n]$ such that $\mathrm{Inf}_{j}^1[A]\ge \frac{1}{\sqrt{n}}$.
\item[ii)] $\mathrm{Inf}^1[A]\geq C \log n {\mathrm{Var}[A]}$
\end{itemize}
\end{cor}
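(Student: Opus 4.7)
The plan is a direct two-case dichotomy based on Theorem~\ref{thm:main1}. Let $C_0$ denote the absolute constant provided by that theorem, so that
\[
\max_{j}\mathrm{Inf}_j^1[A]\geq 2^{-C_0\,\mathrm{Inf}^1[A]/\mathrm{Var}[A]}.
\]
I would split on whether the largest $L^1$-influence already exceeds $1/\sqrt{n}$: if it does, conclusion (i) holds and there is nothing to prove.

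Assume then that (i) fails, i.e.\ $\max_{j}\mathrm{Inf}_j^1[A] < 1/\sqrt{n}$. Substituting into the bound from Theorem~\ref{thm:main1} gives
\[
2^{-C_0\,\mathrm{Inf}^1[A]/\mathrm{Var}[A]} \leq \max_{j}\mathrm{Inf}_j^1[A] < n^{-1/2}.
\]
Taking $\log_2$ on both sides and rearranging yields $C_0\,\mathrm{Inf}^1[A]/\mathrm{Var}[A] > \tfrac{1}{2}\log_2 n$, that is,
\[
\mathrm{Inf}^1[A] \;>\; \frac{1}{2 C_0 \ln 2}\,(\log n)\,\mathrm{Var}[A].
\]
Setting $C := 1/(2 C_0 \ln 2)$ then delivers conclusion (ii).

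There is no genuine obstacle here: the entire argument is a one-line manipulation of Theorem~\ref{thm:main1}, with the threshold $1/\sqrt{n}$ chosen precisely so that when it is violated, the exponent in $2^{-C_0\,\mathrm{Inf}^1[A]/\mathrm{Var}[A]}$ is forced to be of order $\log n$. Any threshold of the form $n^{-\alpha}$ with $\alpha>0$ would give the same qualitative statement, possibly with a different absolute constant $C$; the choice $\alpha=1/2$ is simply a convenient one that makes the statement clean.
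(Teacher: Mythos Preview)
Your proof is correct and is essentially the same argument as the paper's: the paper splits instead on whether conclusion (ii) already holds with constant $\tfrac{1}{2C_0}$, and if not, feeds the resulting bound on $\mathrm{Inf}^1[A]/\mathrm{Var}[A]$ into Theorem~\ref{thm:main1} to force $\max_j\mathrm{Inf}_j^1[A]\ge 1/\sqrt{n}$. The two dichotomies are contrapositives of one another.
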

\begin{proof} 
Let $C$ be the constant in Theorem \ref{thm:main1}.
If $\mathrm{Inf}^1[A]\geq \frac{\log\,  n}{2C}\mathrm{Var}[A]$, we are done. Otherwise,
Theorem \ref{thm:main1} implies that there is $j\in[n]$ such that $\mathrm{Inf}_j^1[A]\geq2^{-C\frac{\mathrm{Inf}^1[A]}{\mathrm{Var}[A]}}\geq\frac{1}{\sqrt{n}}$. The proof is complete.
\end{proof}
\begin{remark}\label{quantumkkl}
	For any $A\in M_2(\C)^{\otimes n}$ with $\|A\|\leq1$,  Corollary \ref{main2} clearly implies that there exists an absolute constant $C>0$ such that
	$$\max_{j\in[n]}\mathrm{Inf}_j^1[A]\geq C \, \mathrm{Var}[A]  \,  \frac{\log n \, }{n}.$$
Theorem \ref{thm:main1} therefore is stronger than the $L^1$-KKL Theorem for quantum Boolean function obtained in \cite[Theorem 3.9]{RWZ}.	
\end{remark}

\subsection{A quantum Talagrand's inequality}
A major tool in the proof of Rouz\'e, Wirth and Zhang \cite{RWZ} for the quantum KKL theorem is a quantum analog of Talagrand inequality \cite[Theorem 3.6]{RWZ}. Their approach used complex interpolation and semigroup techniques. In this subsection, we present an alternate proof to the quantum $L^1$-Talagrand inequality. Our approach is based on the random restrictions and the soft chunks used in previous subsection.
The quantum Talagrand's inequality is stated as follows.
\begin{thm}\label{main3}
	Let $A\in M_2(\C)^{\otimes n}$ with $\|A\|\leq1$. Then  there exists an absolute constant $C>0$ such that $$\mathrm{Var}[A]\leq C\sum_{j\in[n]}\frac{\mathrm{Inf}^1_j[A]}{\log(1/\mathrm{Inf}^1_j[A])}.$$
\end{thm}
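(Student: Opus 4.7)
The plan is to rerun the random-restriction and soft-chunk strategy used in the proof of Theorem~\ref{thm:main1}, but to preserve the coordinate-wise dependence on $a_j:=\mathrm{Inf}^1_j[A]$ throughout, so that a weighted Cauchy-Schwarz produces the Talagrand quotient $a_j/\log(1/a_j)$ for each $j$ separately instead of the maximum-based bound used for Theorem~\ref{thm:main1}.

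First, invoke Lemma~\ref{po} to write $\mathrm{Var}[A]\le 4\sum_{d\in D_{\mathrm{good}}}W_{\approx d}[A]$, with $d$ ranging over powers of $2$. For each such $d$ and each $J\subseteq [n]$, the Log-Sobolev step behind inequality~\eqref{eq:later} applied to the normalized soft chunk $B_d/2$, together with the pointwise estimates $\|g^{(d)}_{j,k}\|_2^2\le 4a_j$ and $\|g^{(d)}_{j,k}\|_1\le 2a_j$ from Lemmas~\ref{was21}, \ref{L2}(ii), and \ref{keylemma}(i), yields
\[\mathrm{Inf}[B_d]\ge \tfrac{1}{2}\sum_{k,\,j\in J}\|g^{(d)}_{j,k}\|_2^2\log\tfrac{1}{4a_j}-\sqrt{2}\sum_{k,\,j\in J}\sqrt{a_j}\,\|g^{(d)}_{j,k}\|_2.\]

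The cross-term is then disposed of by the weighted Cauchy--Schwarz
\[\sum_{k,\,j\in J}\sqrt{a_j}\,\|g^{(d)}_{j,k}\|_2\le \Bigl(\sum_{k,\,j\in J}\tfrac{a_j}{\log(1/(4a_j))}\Bigr)^{\!1/2}\Bigl(\sum_{k,\,j\in J}\|g^{(d)}_{j,k}\|_2^2\log\tfrac{1}{4a_j}\Bigr)^{\!1/2}\]
combined with AM--GM, after the routine case split between the ``large influence'' regime $a_j\ge 1/16$ (handled by Poincar\'e's inequality~\eqref{eq:poincare2} directly) and the ``small influence'' regime where $\log(1/(4a_j))\asymp \log(1/a_j)$ is positive and comparable to the target weight. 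This gives
\[\sum_{k,\,j\in J}\|g^{(d)}_{j,k}\|_2^2\log\tfrac{1}{a_j}\;\lesssim\;\mathrm{Inf}[B_d]+\sum_{j\in J}\tfrac{a_j}{\log(1/a_j)}.\]

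Finally I take the expectation over a $\delta_d$-random subset $J$ with $\delta_d=1/(2d)$. Lemma~\ref{first level}, combined with the lower bound $\Pr[|J\cap\mathrm{supp}\,v|=1]\ge 1/(12d)$ for $d\le|\mathrm{supp}\,v|<2d$ and the chunk comparison of Lemma~\ref{keylemma}(ii), converts the expectation of the LHS into a constant multiple of $\tfrac{1}{d}W_{\approx d}[A]$ weighted by a suitable logarithmic average of the $a_j$'s on the support of the chunk. Multiplying by $d$ and summing over $d\in D_{\mathrm{good}}$, a weighted dyadic summability of the form $\sum_d d\,\mathrm{Inf}[B_d]\lesssim\mathrm{Inf}[A]\le 2\mathrm{Inf}^1[A]$ (a small modification of Lemma~\ref{keylemma}(iii) obtained by direct evaluation of the dyadic tail sums) closes the estimate and yields $\mathrm{Var}[A]\lesssim\sum_j a_j/\log(1/a_j)$. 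The principal obstacle I anticipate is the dyadic bookkeeping: tracking the three $d$-dependent factors (density $\delta_d\sim 1/d$, single-intersection probability $\sim 1/d$, and chunk ratio $\sim 1$) and verifying that they combine with the logarithmic weight so that, after summing over the dyadic levels, no spurious $\log\log$ correction survives in the Talagrand denominator. This careful tracking of the logarithm, rather than the bare $\log(1/\max_j a_j)$ that sufficed for Theorem~\ref{thm:main1}, is the technically delicate step distinguishing the Talagrand inequality from the KKL-type bound.
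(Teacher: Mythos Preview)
Your proposal has a genuine gap at the final summation step. The claimed ``small modification of Lemma~\ref{keylemma}(iii)'' asserting $\sum_d d\,\mathrm{Inf}[B_d]\lesssim\mathrm{Inf}[A]$ is false. A direct computation shows that the coefficient of $|\widehat A_s|^2$ in $\sum_{d=2^l} d\,\mathrm{Inf}[B_d]$ is $|\mathrm{supp}\,s|\sum_d d\bigl((1-\tfrac{1}{3d})^{|\mathrm{supp}\,s|}-(1-\tfrac{1}{2d})^{|\mathrm{supp}\,s|}\bigr)^2$, and for $m=|\mathrm{supp}\,s|$ the inner sum is of order $m$ (the terms with $d\approx m$ alone contribute $\Theta(m)$, and the tail $d\gg m$ contributes another $m^2\sum_{d\gg m}d^{-1}\approx m$). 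Hence $\sum_d d\,\mathrm{Inf}[B_d]\approx\sum_s|\mathrm{supp}\,s|^2|\widehat A_s|^2$, which is not dominated by $\mathrm{Inf}[A]=\sum_s|\mathrm{supp}\,s|\,|\widehat A_s|^2$. The paper proves instead (Lemma~\ref{p1o}) that $\sum_d \tfrac{1}{d}\mathrm{Inf}[B_d]\le 10\,\mathrm{Var}[A]$, i.e.\ the opposite weighting.

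This is not a bookkeeping issue but a structural one: after taking the expectation over $J$ and multiplying by $d$, your inequality reads roughly $(\text{logarithmically weighted }W_{\approx d})\lesssim d\,\mathrm{Inf}[B_d]+\sum_j a_j/\log(1/a_j)$, and neither RHS term is summable over $d\in D_{\mathrm{good}}$ to something finite. The paper avoids this in two ways. First, it replaces $D_{\mathrm{good}}$ by the set $G_{\mathrm{good}}=\{d:dW_{\approx d}[A]\ge\tfrac{1}{20}\mathrm{Inf}[B_d]\}$, precisely so that the troublesome $\mathrm{Inf}[B_d]$ can be absorbed back into $dW_{\approx d}[A]$. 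Second, and more importantly, it does not move $\log(1/a_j)$ onto the left side as you do; instead it runs a dichotomy (Lemma~\ref{tala}) in which Cauchy--Schwarz is applied to the pair $\bigl(\tfrac{\|g_{j,k}\|_2^2}{1+\log(1/\|g_{j,k}\|_2^2)},\,\|g_{j,k}\|_2^2(1+\log(1/\|g_{j,k}\|_2^2))\bigr)$, putting the Talagrand denominator directly where it is needed. Your weighted Cauchy--Schwarz produces $\|g_{j,k}\|_2^2\log(1/a_j)$ as a \emph{lower} bound on something, which is the wrong direction for extracting $W_{\approx d}[A]$ itself.
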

We  start with the following two lemmas.
\begin{lem}\label{tala}
	Let $A\in M_2(\C)^{\otimes n}$ with $\|A\|\leq1$. 
	Then one of the following two conclusions hold for all $d=2^l\geq1$ be a nonnegative integer power of 2:
	\begin{itemize}
		\item[(i)] $\displaystyle\sum_{j\in [n]} \, \frac{\mathrm{Inf}_j[A]}{\log(1/\mathrm{Inf}_j^1[A])}\geq\frac{1}{432}\frac{d(W_{\approx d}[A])^2}{\mathrm{Inf}[A]}$;
		\item[(ii)] $\displaystyle\sum_{j\in [n]}\frac{\mathrm{Inf}_j^1[A]}{\log(1/\mathrm{Inf}_j^1[A])}\geq \frac{1}{192}dW_{\approx d}[A]$.
	\end{itemize}
\end{lem}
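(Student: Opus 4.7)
The plan is to combine Lemma \ref{bmo} with the soft-chunk machinery from the proof of Theorem \ref{thm:main1}, applied at scale $d$. First, take $J$ to be a $1/(2d)$-random subset of $[n]$. By Lemma \ref{first level} and the observation (already used in the proof of Theorem \ref{thm:main1}) that $\mathrm{Pr}[|J\cap \mathrm{supp}\, v|=1]\ge 1/6$ whenever $|\mathrm{supp}\, v|\in [d,2d)$, combined with the lower bound $W_{\approx d}[B_d]\ge W_{\approx d}[A]/400$ from Lemma \ref{keylemma}(ii), we obtain $\mathbb{E}_J[W_J(B_d)]\ge W_{\approx d}[A]/2400$, so that some deterministic $J$ satisfies $W_J(B_d)\ge W_{\approx d}[A]/2400$. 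Applying Lemma \ref{bmo} to $B_d/2$ (whose operator norm is at most $1$ by Lemma \ref{keylemma}(i)) with this $J$ yields the master inequality
\[
\mathrm{Inf}[B_d]\;\ge\;\tfrac{1}{2}LW_J(B_d)\;-\;\sqrt{6\,\mathrm{Inf}^1[B_d]\,W_J(B_d)},\qquad L:=\log\bigl(1/\max_{j\in J}\mathrm{Inf}_j^1[B_d]\bigr),
\]
after absorbing the scaling factor of $2$.

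The source of the factor $d$ appearing in both conclusions is the degree-concentration inequality
\[
\mathrm{Inf}[B_d]=\sum_\ell \ell\,W_{=\ell}[B_d]\;\ge\;d\sum_{\ell\ge d}W_{=\ell}[B_d]\;\ge\;d\,W_{\approx d}[B_d]\;\ge\;\tfrac{d}{400}\,W_{\approx d}[A],
\]
which holds straight from the definition of $\mathrm{Inf}[B_d]$ together with Lemma \ref{keylemma}(ii). The random restriction itself supplies no $d$; rather, it is the concentration of $B_d$'s Fourier weight at degrees $\gtrsim d$ that promotes $\mathrm{Inf}[B_d]$ to be of order $d$ times the relevant piece of $W_{\approx d}[A]$, and this $d$ must be carried through the rest of the argument.

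From the master inequality I split into two cases according to which right-hand term dominates. If $\mathrm{Inf}[B_d]\ge\sqrt{6\,\mathrm{Inf}^1[B_d]\,W_J(B_d)}$, then $LW_J(B_d)\le 4\,\mathrm{Inf}[B_d]$; combining with $\mathrm{Inf}_j[B_d]\le 4\mathrm{Inf}_j[A]$ (Lemma \ref{keylemma}(i)), $\mathrm{Inf}_j^1[B_d]\le 2\mathrm{Inf}_j^1[A]$, and the Cauchy--Schwarz identity
\[
\mathrm{Inf}[A]^2\;\le\;\Bigl(\sum_j \tfrac{\mathrm{Inf}_j[A]}{\log(1/\mathrm{Inf}_j^1[A])}\Bigr)\Bigl(\sum_j \mathrm{Inf}_j[A]\log(1/\mathrm{Inf}_j^1[A])\Bigr),
\]
I plan to derive Case (i), with the $d$ entering through the degree-concentration inequality used to replace $\mathrm{Inf}[B_d]$ by a $dW_{\approx d}[A]$ expression. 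If instead the second term dominates, we get $L^2W_J(B_d)\le 96\,\mathrm{Inf}^1[B_d]\le 192\,\mathrm{Inf}^1[A]$, and a parallel $\log$-weighted Cauchy--Schwarz over coordinates (converting the single-coordinate quantity $L$ to the weighted sum) together with the same degree-concentration input yields Case (ii).

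The main obstacle is carrying the $d$ factor cleanly through the Cauchy--Schwarz step with logarithmic weights: $L$ is merely the logarithm of one coordinate's $L^1$-influence, whereas the conclusions are sums over all coordinates with logarithmic denominators, and the $d$ produced by the degree-concentration step must survive this conversion without being absorbed into the $\log$-losses. The explicit constants $432$ and $192$ compound the constants $2400$ (from random restriction), $400$ (soft-chunk extraction in Lemma \ref{keylemma}(ii)), $\sqrt 6$ (error term in Lemma \ref{bmo}), and the factors of $2$ and $4$ arising from passing between $B_d$, $B_d/2$, and $A$.
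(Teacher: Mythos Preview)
Your proposal has a genuine gap in how the factor $d$ is produced, and it stems from a structural choice that diverges from the paper.

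The paper's proof works directly with $A$ (not with the soft chunk $B_d$; the soft chunks enter only later, when Lemma~\ref{tala} is \emph{applied} to $B_d/2$ in the proof of Theorem~\ref{main3}). Starting from inequality \eqref{eq:later} for $A$ and the $g_{j,k}$ of Section~3.1, the paper keeps $J$ as a $\tfrac{1}{2d}$-random subset \emph{throughout} and applies Cauchy--Schwarz after taking $\mathbb E_J$. The factor $d$ then appears purely from linearity of expectation:
\[
\mathbb E_J\Bigl[\sum_{j\in J}F(j)\Bigr]=\sum_{j\in[n]}F(j)\,\mathrm{Pr}[j\in J]=\frac{1}{2d}\sum_{j\in[n]}F(j),
\]
so that a lower bound of the form $\mathbb E_J[\sum_{j\in J}\cdots]\gtrsim (W_{\approx d}[A])^2/\mathrm{Inf}[A]$ immediately becomes $\sum_{j\in[n]}\cdots\gtrsim d\,(W_{\approx d}[A])^2/\mathrm{Inf}[A]$. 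The same mechanism produces the $d$ in Case~(ii).

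You instead freeze a deterministic $J$ achieving $W_J(B_d)\ge W_{\approx d}[A]/2400$ and propose to recover $d$ from the degree-concentration bound $\mathrm{Inf}[B_d]\ge \tfrac{d}{400}W_{\approx d}[A]$. But this puts $d$ on the wrong side: in your master inequality, $\mathrm{Inf}[B_d]$ is an \emph{upper} bound for $LW_J(B_d)$, so making $\mathrm{Inf}[B_d]$ large by a factor $d$ only weakens the conclusion $LW_J(B_d)\le 4\,\mathrm{Inf}[B_d]$. Moreover, once $J$ is fixed, the only passage from $\sum_{j\in J}$ to $\sum_{j\in[n]}$ is the trivial inequality $\sum_{j\in J}\le\sum_{j\in[n]}$, which costs nothing but also gains no $d$. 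Your global Cauchy--Schwarz $\mathrm{Inf}[A]^2\le(\sum_j \mathrm{Inf}_j[A]/\log(1/\mathrm{Inf}_j^1[A]))(\sum_j \mathrm{Inf}_j[A]\log(1/\mathrm{Inf}_j^1[A]))$ is over all of $[n]$ and has no link to the fixed $J$ or to $B_d$, so it cannot import the $d$ either. In short, the route ``fix $J$, then use degree concentration of $B_d$'' does not place $d$ in the numerator of the desired lower bounds; you need to keep $J$ random and extract $d$ from $\mathrm{Pr}[j\in J]=\tfrac{1}{2d}$, exactly as the paper does.
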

\begin{proof}
	Let $J$ be  a  $\frac{1}{2d}$-random subset of $[n]$. For each $j\in J$ and $k\in\{1,2,3\}$, recall that
	$$g_{j,k}=\tau_J(A\sigma_{k(j)})=\sum\limits_{s\in \{0,1,2,3\}^{J^c}}
	\widehat A_{s\cup k(j)} \sigma_{s}$$
	and by \eqref{eq:later},
	\begin{align}\label{e3}
		\mathrm{Inf}[A]\geq\frac{1}{2}\sum_{k=1}^3\sum_{j\in J}{\norm{g_{j,k}}{2}^2}\log (1/{\norm{g_{j,k}}{2}^2})-\sum_{k=1}^3\sum_{j\in J}\norm{g_{j,k}}{2}\norm{g_{j,k}}{1}^{1/2}.
	\end{align}
	In the following, we use the short notation $\sum_{k,j}$ for the summation $\sum_{k=1}^3\sum_{j\in J}$ over $g_{j,k}$. Recall that by the Poincar\'e inequality \eqref{eq:poincare2},
	\[ \mathrm{Inf}[A]\ge \mathrm{Var}[A]\ge \sum_{k,j}{\norm{g_{j,k}}{2}^2}\ .\]
	Combined with \eqref{e3}, we have
	\begin{align}\label{e2}
		\mathrm{Inf}[A]\geq\frac{1}{4}\sum_{k,j}{\norm{g_{j,k}}{2}^2}\big(1+\log({1}/{\norm{g_{j,k}}{2}^2})\big)-\sum_{k,j}\norm{g_{j,k}}{2}\norm{g_{j,k}}{1}^{1/2}.
	\end{align}
	We now divide the discussion into two cases depending on whether or not the first term after  taking the expectation over $J$ dominates  the second on the right hand side.\\
	
	\noindent \textbf{Case 1}: $\mathbb E_J[\frac{1}{4}\sum_{k,j}{\norm{g_{j,k}}{2}^2}\big(1+\log({1}/{\norm{g_{j,k}}{2}^2})\big)]\ge 2\mathbb E_J[\sum_{k,j}\norm{g_{j,k}}{2}{\|g_{j,k}\|^{1/2}_1}]$.
	
	In this case, 
	taking the expectation of $J$ on both sides of (\ref{e2}), we have
	$$\mathrm{Inf}[A]\geq\frac{1}{8}\mathbb  E_J\Big[\sum_{k,j}{\norm{g_{j,k}}{2}^2}\big(1+\log({1}/{\norm{g_{j,k}}{2}^2})\big)\Big].$$
	Then by the Cauchy-Schwarz inequality,
	\begin{align*}
		&\big(\mathbb E_J\big[\sum_{k,j}{\norm{g_{j,k}}{2}^2}\big]\big)^2
		\\ \leq &\ \mathbb E_J\Big[\sum_{k,j}\frac{{\norm{g_{j,k}}{2}^2}}{1+\log({1}/{\norm{g_{j,k}}{2}^2}\big)}\Big]\cdot\mathbb E_J\big[\sum_{k,j}{\norm{g_{j,k}}{2}^2}\big(1+\log({1}/{\norm{g_{j,k}}{2}^2})\big)\big]\\
		\leq &\ 8\mathbb E_J\Big[\sum_{k,j}\frac{{\norm{g_{j,k}}{2}^2}}{1+\log({1}/{\norm{g_{j,k}}{2}^2})}\Big]\cdot\mathrm{Inf}[A].
	\end{align*}
	On the other hand, using Lemma \ref{first level} and
	(\ref{e19}),
	\begin{align}\label{e11s}
		\mathbb E_J\big[\sum_{k,j}{\norm{g_{j,k}}{2}^2} \big]\geq\sum_{d\leq|\mathrm{supp} \, v|<2d} |\widehat A_v|^2 \, \mathrm{Pr}[|J\cap \mathrm{supp} \, v|=1]\geq\frac{1}{6}  W_{\approx d}[A].
	\end{align}
	Therefore,
	\begin{align}\label{e11s1}
		\mathbb E_J\Big[\sum_{k,j}\frac{{\norm{g_{j,k}}{2}^2}}{1+\log({1}/{\norm{g_{j,k}}{2}^2})}\Big]\geq\frac{1}{288}\frac{(W_{\approx d}[A])^2}{\mathrm{Inf}[A]}.
	\end{align}
	Note that by Lemma \ref{L2} (ii) and (\ref{inf}), ${\norm{g_{j,k}}{2}^2}\leq \mathrm{Inf}_j[A]\leq2 \mathrm{Inf}_j^1[A]$ for each $j\in J$ and $k\in\{1,2,3\}$.
	Then replacing ${\norm{g_{j,k}}{2}^2}$ by $\mathrm{Inf}_j[A]$ in the numerator and $2 \mathrm{Inf}_j^1[A]$ in the denominator on the left-hand side of (\ref{e11s1}), one has
	$$\mathbb E_J\Big[\sum_{j\in J}\frac{3\mathrm{Inf}_j[A]}{1+\log(\frac{1}{2\mathrm{Inf}_j^1[A]})}\Big]
	= \mathbb E_J\Big[\sum_{j\in [n]} \frac{3\mathrm{Inf}_j[A]}{1+\log(\frac{1}{2\mathrm{Inf}_j^1[A]})}\; \chi_{J}(j) \Big]
	\geq\frac{1}{288}\frac{(W_{\approx d}[A])^2}{\mathrm{Inf}[A]},$$
	which gives by noting $J$ is a $\frac{1}{2d}$-random subset that
	$$ \sum_{j\in [n]} \, \frac{\mathrm{Inf}_j[A]}{\log({1}/\mathrm{Inf}_j^1[A])}\geq\frac{1}{432}\frac{d(W_{\approx d}[A])^2}{\mathrm{Inf}[A]}.$$
	This proves (i).\\
	
	\noindent \textbf{Case 2}: $2\mathbb E_J[\sum_{k,j}\norm{g_{j,k}}{2}{\|g_{j,k}\|^{1/2}_1}]>\mathbb E_J[\frac{1}{4}\sum_{k,j}{\norm{g_{j,k}}{2}^2}\big(1+\log({1}/{\norm{g_{j,k}}{2}^2})\big)]$.
	
	In this case, Cauchy-Schwarz inequality gives
	\begin{align}\label{eq:cs}
		&\big(\mathbb  E_J\big[\sum_{k,j}{\norm{g_{j,k}}{2}}{\norm{g_{j,k}}{1}^{1/2}}\big]\big)^2\nonumber
		\\	\leq & \mathbb E_J\Big[\sum_{k,j}\frac{{\norm{g_{j,k}}{1}}}{1+\log({1}/{\norm{g_{j,k}}{2}^2})}\Big]\cdot\mathbb E_J\big[\sum_{k,j}{\norm{g_{j,k}}{2}^2}(1+\log\big({1}/{\norm{g_{j,k}}{2}^2})\big)\big].
	\end{align}
	Note that by assumption,
	$$\mathbb  E_J\big[\sum_{k,j}{\norm{g_{j,k}}{2}^2}(1+\log({1}/{\norm{g_{j,k}}{2}^2}))\big]\le 8\mathbb  E_J\Big[\sum_{k,j}{\norm{g_{j,k}}{2}}{\norm{g_{j,k}}{1}^{1/2}}\Big].$$
	Combining with \eqref{eq:cs}, we have
	\begin{align*}\mathbb E_J\Big[\sum_{k,j}\frac{{\norm{g_{j,k}}{1}}}{1+\log({1}/{\norm{g_{j,k}}{2}^2})}\Big]\geq &\frac{1}{8}\mathbb  E_J\Big[\sum_{k,j}{\norm{g_{j,k}}{2}}{\norm{g_{j,k}}{1}^{1/2}}\Big]\\ \geq &\frac{1}{64}\mathbb E_J\big[\sum_{k,j}{\norm{g_{j,k}}{2}^2}\big(1+\log({1}/{\norm{g_{j,k}}{2}^2})\big)\big].\end{align*}
	Thus it follows from (\ref{e11s}) that
	\begin{align*}W_{\approx d}[A]\leq6\mathbb E_J\big[\sum_{k,j}{\norm{g_{j,k}}{2}^2}\big]\leq &6\mathbb  E_J\big[\sum_{k,j}{\norm{g_{j,k}}{2}^2}\big(1+\log({1}/{\norm{g_{j,k}}{2}^2})\big)\big]\\ \leq &384\mathbb E_J\Big[\sum_{k,j}\frac{{\norm{g_{j,k}}{1}}}{1+\log({1}/{\norm{g_{j,k}}{2}^2})}\Big].\end{align*}
	Again, it follows from Lemma \ref{L2} that ${\norm{g_{j,k}}{1}}$ are bounded  above by $\mathrm{Inf}^1_j[A]$ and ${\norm{g_{j,k}}{2}^2}\leq\mathrm{Inf}_j[A]\leq2\mathrm{Inf}^1_j[A]$ (Lemma \ref{was21}) for all $j\in[n]$ and all $k\in\{1,2,3\}$; moreover,
	each coordinate appears in $J$ with probability $\frac{1}{2d}$,  the similar argument at the end of Case 1 yields
	$$\sum_{j\in [n]}\frac{\mathrm{Inf}_j[A]}{\log(1/\mathrm{Inf}_j^1[A])}\geq \frac{1}{192}dW_{\approx d}[A]. $$
	This completes the proof.
\end{proof}
The second lemma involves the property of soft chunks.
Recall that for $d\in\N_0$, the soft chunks of $A$ are given by
$$B_d=\big(T_{1-\frac{1}{3d}}-T_{1-\frac{1}{2d}}\big)A.$$
\begin{lem}\label{p1o}
	Let $A\in M_2(\C)^{\otimes n}$ . Define
	$$G_{\mathrm{good}}=\Big\{d=2^l,l\in\N_0\  \big|\  dW_{\approx d}[A]\geq\frac{1}{20}\mathrm{Inf}[B_d]\Big\}.$$
	Then 
	$$\mathrm{Var}[A]\leq2\sum_{d\in G_{\mathrm{good}}}W_{\approx d}[A].$$
\end{lem}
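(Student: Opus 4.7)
The plan is to proceed analogously to the proof of Lemma \ref{po}: decompose $\mathrm{Var}[A]=\sum_{d=2^l,\,l\in\mathbb{N}_0}W_{\approx d}[A]$ and show that the contribution from the ``bad'' set is at most half of $\mathrm{Var}[A]$. For $d\notin G_{\mathrm{good}}$ the defining inequality gives $W_{\approx d}[A]\le\tfrac{1}{20d}\mathrm{Inf}[B_d]$, so the entire problem reduces to proving an estimate of the form
$$\sum_{d=2^l}\frac{\mathrm{Inf}[B_d]}{d}\le C\,\mathrm{Var}[A]$$
with an absolute constant $C$ small enough that $C/20\le 1/2$, i.e.\ $C\le 10$.

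For this reduction I would exploit Parseval. Writing $\phi_d(k):=(1-\tfrac{1}{3d})^k-(1-\tfrac{1}{2d})^k$, the formula $\widehat{(B_d)}_s=\phi_d(|\mathrm{supp}\,s|)\widehat{A}_s$ together with \eqref{rwz25} applied to $B_d$ yields
$$\sum_{d=2^l}\frac{\mathrm{Inf}[B_d]}{d}=\sum_{s\neq 0}|\widehat{A}_s|^2\cdot |\mathrm{supp}\,s|\sum_{d=2^l}\frac{\phi_d(|\mathrm{supp}\,s|)^2}{d}.$$
Consequently it suffices to prove the uniform estimate $k\sum_{d=2^l}\phi_d(k)^2/d\le C$ for every integer $k\ge 1$, with $C$ an explicit absolute constant not exceeding $10$.

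To establish the uniform bound I would split the sum over powers of two at $d=k$. On $d\ge k$ the mean value theorem applied to $x\mapsto x^k$ on $[1-\tfrac{1}{2d},1-\tfrac{1}{3d}]$ gives $\phi_d(k)\le k/(6d)$, hence $k\phi_d(k)^2/d\le k^3/(36d^3)$; summing this over $d=2^l\ge k$ produces a geometric series of ratio $1/8$ bounded by a very small constant. On $d<k$ the crude estimate $\phi_d(k)\le(1-1/(3d))^k\le e^{-k/(3d)}$ leads to $k\phi_d(k)^2/d\le u\,e^{-2u/3}$ with $u:=k/d\ge 1$; as $d$ runs through the powers of two less than $k$, the corresponding $u$'s form a geometric progression of ratio $2$ starting in $[1,2)$, and the super-exponential decay of the function $u\mapsto ue^{-2u/3}$ makes the sum bounded by an absolute constant independent of $k$.

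Combining these two cases yields $\sum_{d=2^l}\mathrm{Inf}[B_d]/d\le C\,\mathrm{Var}[A]$ with $C$ comfortably below $10$, so
$$\sum_{d\notin G_{\mathrm{good}}}W_{\approx d}[A]\le \frac{C}{20}\mathrm{Var}[A]\le \frac{1}{2}\mathrm{Var}[A],$$
and the conclusion $\mathrm{Var}[A]\le 2\sum_{d\in G_{\mathrm{good}}}W_{\approx d}[A]$ is immediate. The only nontrivial step is the uniform estimate $k\sum_{d=2^l}\phi_d(k)^2/d=O(1)$: it is elementary but has to be tracked carefully, since the constant $1/20$ in the definition of $G_{\mathrm{good}}$ has been calibrated precisely so that the resulting loss stays below $1/2$.
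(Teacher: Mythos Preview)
Your proposal is correct and follows essentially the same route as the paper: reduce to the key estimate $\sum_{d=2^l}\mathrm{Inf}[B_d]/d\le 10\,\mathrm{Var}[A]$, expand via Parseval and \eqref{rwz25}, and bound the coefficient $k\sum_{d=2^l}\phi_d(k)^2/d$ by splitting the dyadic sum at $d\approx k$. The only cosmetic difference is that for the large-$d$ range the paper uses the cruder bound $\phi_d(k)^2\le\big(1-(1-\tfrac{1}{2d})^k\big)^2\le 1$ (giving $\sum_{d>k}1/d\le 2/k$), whereas you invoke the mean value theorem to get $\phi_d(k)\le k/(6d)$; both estimates comfortably keep the total below $10$.
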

\begin{proof}
	It is enough to show
	\begin{align}\label{e22}
		\sum_{d=2^l}\frac{\mathrm{Inf}[B_d]}{d}\leq 10\mathrm{Var}[A].
	\end{align}
	where the summation is over non-negative integer powers $d=2^l$ of $2$.
	Indeed, with (\ref{e22}) in hand and by the definition of variance $\mathrm{Var}[A]=\sum_{\substack{s\in \{0,1,2,3\}^n\\s\neq 0}}|\widehat A_s|^2$, one has
	\begin{align*}
		\sum_{d\in G_{\mathrm{good}}}W_{\approx d}[A]
		=  &\  \mathrm{Var}[A]-\sum_{d\notin G_{\mathrm{good}}}W_{\approx d}[A]\\
		\geq &\ \mathrm{Var}[A]-\frac{1}{20}\sum_{d\notin G_{\mathrm{good}}}\frac{\mathrm{Inf}[B_d]}{d}\\
		\geq&\ \mathrm{Var}[A]-\frac{1}{2}\mathrm{Var}[A]=\frac{1}{2}\mathrm{Var}[A].
	\end{align*}
	
	To prove (\ref{e22}), recall that $$\mathrm{Var}[A]=\sum_{s\neq 0}|\widehat A_s|^2\ \ \; \mbox{and}\ \ \; \widehat{(B_d)}_\emptyset= \tau(B_d) =  0.$$
	Hence, it suffices to show for each $s\neq0$, the terms $|\widehat A_s|^2$ appears in $\sum_d\frac{\mathrm{Inf}[B_d]}{d}$ with a multiplicative factor of at most 10. Observe that by (\ref{rwz25}) this factor is
	\begin{align}\label{e222}
		|\mathrm{supp} \, s|\sum_d\frac{1}{d}\Big(\Big(1-\frac{1}{3d}\Big)^{|\mathrm{supp} \, s|}-\Big(1-\frac{1}{2d}\Big)^{|\mathrm{supp} \, s|}\Big)^2.
	\end{align}
	We estimate the contribution from $d\leq |\mathrm{supp} \, s|$ and $d>|\mathrm{supp} \, s|$ separately.
	Let $m$ be such that $2^m\leq|\mathrm{supp} \, s|<2^{m+1}$. The first part is bounded above by
	$$\sum_{d\leq|\mathrm{supp} \, s|}\frac{1}{d}\Big(1-\frac{1}{3d}\Big)^{2|\mathrm{supp} \, s|}\leq\sum_{d\leq|\mathrm{supp} \, s|}\frac{1}{d}e^{-\frac{2|\mathrm{supp} \, s|}{3d}}
	\leq\sum_{i=0}^m\frac{1}{2^i}e^{-\frac{2}{3}2^{m-i}}\leq2^{-m}\sum_{\ell=0}^\infty2^{\ell}e^{-\frac{2}{3}2^{\ell}},
	$$
	which further bounded above by $\frac{5}{|\mathrm{supp} \, s|}$. On the other hand,
	the second part is bounded above by
	$$\sum_{d>|\mathrm{supp} \, s|}\frac{1}{d}\Big(1-\Big(1-\frac{1}{2d}\Big)^{|\mathrm{supp} \, s|}\Big)^2\leq\sum_{d>|\mathrm{supp} \, s|}\frac{1}{d}
	\leq\frac{2}{|\mathrm{supp} \, s|}.$$
	We remind the reader that all the summation is over $d$ being powers of $2$. Finally, combining these two estimates, we see that the quality in (\ref{e222}) is bounded above by 10. Hence, the proof of (\ref{e22}) is completed.
\end{proof}
Now we are at a position to prove Theorem \ref{main3}.
\begin{proof}[Proof of Theorem \ref{main3}.]
	For any $d\in G_{\mathrm{good}}$, let $B_d$ be the soft chunks of $A$. Note that $\|B_d\|\leq2$. Then applying Lemma \ref{tala} to $B_d/2$, we conclude that one of the following holds for all $d$,
	\begin{itemize}
		\item[(i)]  $\displaystyle\sum_{j\in [n]} \, \frac{\mathrm{Inf}_j[B_d]}{\log({2}/\mathrm{Inf}_j^1[B_d])}\geq\frac{1}{432}\frac{d(W_{\approx d}[B_d])^2}{\mathrm{Inf}[B_d]}$.
		\item[(ii)] $\displaystyle\sum_{j\in [n]}\frac{\mathrm{Inf}_j^1[B_d]}{\log(2/\mathrm{Inf}_j^1[B_d])}\geq \frac{1}{384}dW_{\approx d}[B_d]$.
	\end{itemize}
	
	Assume that (i) holds.		
	By Lemma \ref{keylemma} (i) and (ii), we know that for each $j\in[n]$ and $d\in G_{\mathrm{good}}$, $\mathrm{Inf}_j^1[B_d]\leq2\mathrm{Inf}_j^1[A]$ and $\frac{1}{400}W_{\approx d}[A]\leq W_{\approx d}[B_d]\leq W_{\approx d}[A]$. On the other hand,  since $d\in G_{\mathrm{good}}$, $\mathrm{Inf}[B_d]\leq20 dW_{\approx d}[A]$. Therefore,
	$$\sum_{j\in[n]}\frac{\mathrm{Inf}_j[B_d]}{\log(1/\mathrm{Inf}_j^1[A])}\geq\sum_{j\in[n]}\frac{\mathrm{Inf}_j[B_d]}{\log(2/\mathrm{Inf}_j^1[B_d])}\geq\frac{1}{432}\frac{d(W_{\approx d}[B_d])^2}{\mathrm{Inf}[B_d]}\geq\frac{1}{10^{10}} W_{\approx d}[A].$$	
	By above observation,  Lemma \ref{keylemma} (iii) and Lemma \ref{p1o}, we have
	\begin{align*}\sum_{j\in[n]}\frac{\mathrm{Inf}_j[A]}{\log(1/\mathrm{Inf}_j^1[A])}\geq &   \sum_{j\in[n]}\sum_{d\in G_{\mathrm{good}}}\frac{\mathrm{Inf}_j[B_d]}{\log(1/\mathrm{Inf}_j^1[A])}\\ \geq &\frac{1}{10^{10}}\sum_{d\in G_{\mathrm{good}}} W_{\approx d}[A]\geq\frac{1}{2\cdot10^{10}}\mathrm{Var}[A].\end{align*}
	Since  $\mathrm{Inf}_j[A]\leq2\mathrm{Inf}^1_j[A]$ for each $j\in[n]$, one has
	$$\sum_{j\in[n]}\frac{\mathrm{Inf}^1_j[A]}{\log(1/\mathrm{Inf}_j^1[A])}\geq\frac{1}{4\cdot10^{10}}\mathrm{Var}[A].$$
	This gives
	the desired assertion.
	
	If (ii) holds, using the facts $\mathrm{Inf}_j^1[B_d]\leq2\mathrm{Inf}_j^1[A]$ and $\frac{1}{400}W_{\approx d}[A]\leq W_{\approx d}[B_d]\leq W_{\approx d}[A]$ once more, one gets
	\begin{align*}\sum_{j\in [n]}\frac{2\mathrm{Inf}_j^1[A]}{\log(1/\mathrm{Inf}_j^1[A])}\geq &   \sum_{j\in[n]}\frac{\mathrm{Inf}^1_j[B_d]}{\log(2/\mathrm{Inf}_j^1[B_d])}\\ \geq &\frac{1}{384}dW_{\approx d}[B_d]\geq\frac{1}{153600}dW_{\approx d}[A].\end{align*}

	
	On the other hand, by Lemma \ref{p1o}, one has
	\begin{align}\label{e23211222}
		\sum_{d\in G_{\mathrm{good}}}W_{\approx d}[A]\geq\frac{1}{2}\mathrm{Var}[A].
	\end{align}
	Then  \eqref{e23211222} implies that there exists $d\in G_{\mathrm{good}}$ such that
	\begin{align*}
		dW_{\approx d}[A]\geq \frac{1}{4}\mathrm{Var}[A],
	\end{align*}
	Otherwise, 	since $d=2^l, l\in \mathbb{N}_0$ and $\sum_{l=0}\frac{1}{2^l}=2$,
	\begin{align*}
		\sum_{d\in G_{\mathrm{good}}}W_{\approx d}[A]\le \sum_{d\in G_{\mathrm{good}}}\frac{1}{4d}\mathrm{Var}[A]\leq \frac{1}{2}\mathrm{Var}[A].
	\end{align*}
	which contradicts with (\ref{e23211222}). Therefore, by above observations we finally conclude that
	$$\mathrm{Var}[A]\leq1228800\sum_{j\in [n]}\frac{\mathrm{Inf}_j^1[A]}{\log(1/\mathrm{Inf}_j^1[A])}.$$ 
	Hence, the proof is complete.
	%
	%
\end{proof}
\begin{remark}\label{quantumlp}
	Indeed, by using the same argument as in proving
	Theorem \ref{main3}, one can obtain the quantum $L^p$-Talagrand's inequality for $1\leq p<2$. More precisely, for any $A\in M_2(\C)^{\otimes n}$ with $\|A\|\leq1$ and any $1\leq p<2$, there exists a constant $C_p>0$ such that
		\begin{align}\label{e232111222}
	\mathrm{Var}[A]\leq C_p\sum_{j\in[n]}\frac{\mathrm{Inf}^p_j[A]}{\log(1/\mathrm{Inf}_j^p[A])},
	\end{align}
where the constant can be of order $C_p\approx\frac{C}{(2-p)^2}$ as $p\nearrow2$.
We note that such an inequality (\ref{e232111222}) has been obtained in \cite[Theorem 3.6]{RWZ} with better constant $C_p\approx\frac{C}{2-p}$. Nevertheless, in both cases the constant $C_p$ blows up as $p\nearrow2$, which leaves the orginial Talagrand's inequality with quantum $L^2$-influence still open.
\end{remark}

\section{Proof of Theorem \ref{thm:main2}}
In this section, 
we prove Theorem \ref{thm:main2} by using an interpolation method for semigroups. Let $J=\{j_1,\cdot\cdot\cdot,j_k\}
\subset [n]$ and $J^c$ be its complement. Recall that the $L^p$-influence of subset $J$ on $A$ is given by
$$\mathrm{Inf}_{J}^p[A]:=\|d_{j_1}\circ\cdot\cdot\cdot\circ d_{j_k}(A)\|_p^p.$$
For convenience, we write $d_{J}=d_{j_1}\circ\cdot\cdot\cdot\circ d_{j_k}$ and
\begin{align}\label{e2228}
d_J(A)=
\sum_{\substack{s\in \{0,1,2,3\}^n\\s_{j_1}\neq0,\cdot\cdot\cdot,s_{j_k}\neq0}}\widehat A_s\sigma_{s}=\sum_{\substack{s\in \{0,1,2,3\}^n\\ J\subset \mathrm{supp} \, s}}\widehat A_s\sigma_{s}.
\end{align}	
Note that the order does not matter here because $d_{i}\circ d_{j}=d_{j}\circ d_{i}$ for all $i,j\in[n]$.

Write $s_J=(s_{j_1},\cdot\cdot\cdot,s_{j_k})$ and $\sigma_{s_J}=\sigma_{s_{j_1}}\otimes\cdot\cdot\cdot\otimes\sigma_{s_{j_k}}$.
For $\gamma=(\gamma_1,\cdot\cdot\cdot,\gamma_k)\in\{1,2,3\}^k$, we  set $$\partial_J^\gamma (A):=\sum_{\substack{s\in \{0,1,2,3\}^n\\ s=(s_J,s_{J^c}),s_J=\gamma}}\widehat A_s\sigma_{s_{J^c}}.$$
Here 
$s_J=\gamma$ means $s_J=(s_{j_1},\cdot\cdot\cdot,s_{j_k})$ with  $s_{j_1}=\gamma_1,\cdot\cdot\cdot, s_{j_k}=\gamma_k$.  
\begin{lem}\label{integral}
	For any $A\in M_2(\C)^{\otimes n}$,  $J=\{j_1,\cdot\cdot\cdot,j_k\}
	\subset [n]$ and $\gamma=(\gamma_1,\cdot\cdot\cdot,\gamma_k)\in\{1,2,3\}^k$, we have	
	$$\partial_J^\gamma (A)=\tau_J\big(d_{J}(A)(1_{J^c}\ten \sigma_{s_{J}=\gamma})\big).$$
\end{lem}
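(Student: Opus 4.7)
The plan is to expand the right-hand side via Fourier analysis and then invoke the orthonormality of Pauli matrices to collapse the sum to the single desired expression.

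First I would write each multi-index $s \in \{0,1,2,3\}^n$ as $s = (s_J, s_{J^c})$ with $s_J \in \{0,1,2,3\}^J$ and $s_{J^c} \in \{0,1,2,3\}^{J^c}$, and correspondingly $\sigma_s = \sigma_{s_{J^c}} \otimes \sigma_{s_J}$ under the identification $M_2(\mathbb{C})^{\otimes n} \cong M_2(\mathbb{C})^{J^c} \otimes M_2(\mathbb{C})^J$. Using the formula \eqref{e2228} for $d_J(A)$ and the assumption $\gamma \in \{1,2,3\}^k$,
\[
d_J(A)\cdot (1_{J^c} \otimes \sigma_\gamma) = \sum_{\substack{s\in \{0,1,2,3\}^n\\ J \subset \mathrm{supp}\, s}} \widehat{A}_s\, \sigma_{s_{J^c}} \otimes (\sigma_{s_J} \sigma_\gamma).
\]

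Next I would apply $\tau_J = \mathbb{I}_{J^c} \otimes \tau_J$ to both sides and push the (normalized) partial trace inside the sum. Since $\tau_J$ factors as a product of the single-site normalized traces, one has
\[
\tau_J(\sigma_{s_J}\sigma_\gamma) = \prod_{i=1}^k \tfrac{1}{2}\mathrm{tr}(\sigma_{s_{j_i}}\sigma_{\gamma_i}) = \prod_{i=1}^k \delta_{s_{j_i},\gamma_i},
\]
by the orthonormality of Pauli matrices under the normalized trace inner product. This Kronecker delta forces $s_J = \gamma$, and since $\gamma_i \in \{1,2,3\}$ for each $i$, the constraint $J \subset \mathrm{supp}\, s$ is automatic. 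Therefore only the terms with $s_J = \gamma$ survive, yielding
\[
\tau_J\big(d_J(A)(1_{J^c} \otimes \sigma_\gamma)\big) = \sum_{\substack{s \in \{0,1,2,3\}^n\\ s_J = \gamma}} \widehat{A}_s\, \sigma_{s_{J^c}} = \partial_J^\gamma(A),
\]
which is precisely the definition.

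There is really no obstacle here; the lemma is essentially a bookkeeping identity. The only point that warrants care is making sure the identification of $M_2(\mathbb{C})^{\otimes n}$ with $M_2(\mathbb{C})^{J^c}\otimes M_2(\mathbb{C})^J$ and the factorization of $\tau_J$ into single-site normalized traces are used consistently, so that the Pauli orthogonality $\tau(\sigma_a\sigma_b) = \delta_{a,b}$ gives exactly the Kronecker delta on multi-indices restricted to $J$.
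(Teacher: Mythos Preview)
Your proof is correct and follows essentially the same approach as the paper: expand $d_J(A)$ via \eqref{e2228}, multiply by $1_{J^c}\otimes\sigma_\gamma$, apply $\tau_J$, and use Pauli orthonormality to collapse the sum to $s_J=\gamma$. You are somewhat more explicit than the paper about the orthonormality step and the observation that the constraint $J\subset\mathrm{supp}\,s$ becomes automatic once $s_J=\gamma\in\{1,2,3\}^k$, but the argument is the same.
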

\begin{proof}
	By (\ref{e2228}), one has
	\begin{align*}
		\tau_J\big(d_{J}(A)(1_{J^c}\ten \sigma_{s_{J}=\gamma})\big)
		=&\  \tau_J\Big(\sum_{\substack{s\in \{0,1,2,3\}^n\\ J\subset \mathrm{supp} \, s}}\widehat A_s\sigma_{s_J}\sigma_{s_{J}=\gamma}\otimes\sigma_{s_{J^c}}\Big)\\
		=&\ \sum_{\substack{s\in \{0,1,2,3\}^n\\ s_{J}=\gamma}}\widehat A_s\sigma_{s_{J^c}}=\partial_J^\gamma (A),
	\end{align*}
	which finishes the proof.	
\end{proof}

\begin{lem}\label{se1}
	Let $1\leq k\leq n$ be integers.  For every $A\in M_2(\C)^{\otimes n}$, we have
	$$W_{\geq k}[A]=2k \, \sum_{J\subset [n], |J|=k}\sum_{\gamma\in\{1,2,3\}^k}\int_0^\infty(e^{2t}-1)^{k-1} \, e^{-2kt}\|P_t \, \partial_J^\gamma(A)\|_2^2 \, dt.$$
\end{lem}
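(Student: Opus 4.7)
The plan is to unfold both sides via the Fourier (Pauli) expansion, use Parseval together with the explicit action of $P_t$ in \eqref{eq:defn of semigroup1}, and reduce the time integral to a Beta function whose value cancels exactly against the binomial coefficient that arises from combining the sums over $J$ and $\gamma$. This is purely a book-keeping identity on Fourier coefficients; there is no deep inequality to verify, so the only ``obstacle'' is arranging the combinatorics cleanly.

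First, from the definition $\partial_J^\gamma(A)=\sum_{s:\, s_J=\gamma}\widehat A_s\,\sigma_{s_{J^c}}$ and the formula \eqref{eq:defn of semigroup1} for $P_t$, one gets
\begin{align*}
\|P_t\,\partial_J^\gamma(A)\|_2^2=\sum_{s:\,s_J=\gamma}e^{-2t|\mathrm{supp}\,s_{J^c}|}\,|\widehat A_s|^2
=\sum_{s:\,s_J=\gamma}e^{-2t(|\mathrm{supp}\,s|-k)}\,|\widehat A_s|^2,
\end{align*}
where in the second equality I use that $s_J=\gamma\in\{1,2,3\}^k$ forces $J\subseteq\mathrm{supp}\,s$ and hence $|\mathrm{supp}\,s_{J^c}|=|\mathrm{supp}\,s|-k$. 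Summing over $\gamma\in\{1,2,3\}^k$ amounts to removing the constraint $s_J=\gamma$ while keeping $J\subseteq\mathrm{supp}\,s$, and then summing over $J$ with $|J|=k$ gives, via Fubini, a multiplicity of $\binom{|\mathrm{supp}\,s|}{k}$. Thus
\begin{align*}
\sum_{|J|=k}\sum_{\gamma\in\{1,2,3\}^k}\|P_t\,\partial_J^\gamma(A)\|_2^2
=\sum_{|\mathrm{supp}\,s|\geq k}\binom{|\mathrm{supp}\,s|}{k}e^{-2t(|\mathrm{supp}\,s|-k)}|\widehat A_s|^2.
\end{align*}

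Next, multiply by $(e^{2t}-1)^{k-1}e^{-2kt}$ and observe the telescoping $e^{-2kt}\cdot e^{-2t(|\mathrm{supp}\,s|-k)}=e^{-2t|\mathrm{supp}\,s|}$. Integrating in $t$ and swapping sum and integral, it remains to evaluate
\begin{align*}
I_m:=\int_0^\infty(e^{2t}-1)^{k-1}e^{-2tm}\,dt
\end{align*}
for $m=|\mathrm{supp}\,s|\geq k$. The substitution $v=e^{-2t}$ (so $dt=-dv/(2v)$, $e^{2t}-1=(1-v)/v$) converts this into the Beta integral
\begin{align*}
I_m=\tfrac12\int_0^1(1-v)^{k-1}v^{m-k}\,dv=\tfrac12 B(m-k+1,k)=\frac{(k-1)!\,(m-k)!}{2\,m!}.
\end{align*}
A direct computation then gives $\binom{m}{k}\,I_m=\frac{1}{2k}$, independently of $m$. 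Therefore
\begin{align*}
2k\sum_{|J|=k}\sum_\gamma\int_0^\infty(e^{2t}-1)^{k-1}e^{-2kt}\|P_t\,\partial_J^\gamma(A)\|_2^2\,dt
=\sum_{|\mathrm{supp}\,s|\geq k}|\widehat A_s|^2=W_{\geq k}[A],
\end{align*}
which is the identity. The only step requiring care is the Beta-function evaluation and the cancellation with $\binom{m}{k}$; everything else is Parseval plus rearrangement of sums.
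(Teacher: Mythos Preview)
Your proof is correct and follows essentially the same approach as the paper: expand $\|P_t\,\partial_J^\gamma(A)\|_2^2$ via Parseval, sum over $\gamma$ and $J$ to produce the binomial coefficient $\binom{|\mathrm{supp}\,s|}{k}$, and then evaluate the time integral by the substitution $v=e^{-2t}$ to obtain the Beta function $B(m-k+1,k)$, which cancels exactly against the binomial. The only cosmetic difference is that the paper first absorbs the factor $e^{-2kt}$ into the summand before summing over $J$ and $\gamma$, whereas you combine it afterward; the computations are otherwise identical.
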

\begin{proof}
	Recall that for each $J\subset [n]$ with $|J|=k$ and each $\gamma\in\{1,2,3\}^k$,
	$$P_t \, \partial_J^\gamma(A)=\sum_{\substack{s\in \{0,1,2,3\}^n\\ s=(s_J,s_{J^c}),s_J=\gamma}}e^{-t(|\mathrm{supp} \, s|-k)}\widehat A_s\sigma_{s_{J^c}}.$$ 
	It follows from Parseval's identity, Fubini's theorem and Lemma \ref{integral} that
	\begin{align*}
		\sum_{|J|=k}\sum_{\gamma\in\{1,2,3\}^k}e^{-2kt}\|P_t \, \partial_J^\gamma(A)\|_2^2
		=&\ \sum_{|J|=k}\sum_{\gamma\in\{1,2,3\}^k}\sum_{\substack{s\in \{0,1,2,3\}^n\\ s=(s_J,s_{J^c}),s_J=\gamma}}e^{-2t|\mathrm{supp} \, s|} \; |\widehat A_s|^2\\
		=&\ \sum_{|\mathrm{supp} \, s|\geq k}\sum_{\substack{J\subset \mathrm{supp}{s} \\ |J|=k }}e^{-2t|\mathrm{supp} \, s|} \; |\widehat A_s|^2\\
		=&\ \sum_{|\mathrm{supp} \, s|\geq k}{\,|\mathrm{supp} \, s|\, \choose k} \, e^{-2t|\mathrm{supp} \, s|} \; |\widehat A_s|^2.
	\end{align*}	
	Therefore, we find
	\begin{align*}
		&\
		\sum_{J\subset [n], |J|=k}\sum_{\gamma\in\{1,2,3\}^k}\int_0^\infty(e^{2t}-1)^{k-1} \, e^{-2kt}\|P_t \, \partial_J^\gamma(A)\|_2^2 \, dt\\
		= &\ \sum_{|\mathrm{supp} \, s|\geq k}{\,|\mathrm{supp} \, s|\, \choose k} \, |\widehat A_s|^2\int_0^\infty(e^{2t}-1)^{k-1}e^{-2t|\mathrm{supp} \, s|} \, dt.
	\end{align*}	
	Substituting $u = e^{-2t}$, $u \in (0,1]$, i.e.\ $t = -\frac{1}{2}\log u$ with $\big|\frac{dt}{du}\big| = \frac{1}{2u}$, we get
	\begin{align*}
		\int_0^\infty(e^{2t}-1)^{k-1}e^{-2t|\mathrm{supp} \, s  |} \, dt
		=&\  \frac{1}{2}\int_0^1(1-u)^{k-1}u^{|\mathrm{supp} \, s|-k} \, du
		\\ =&\frac{1}{2}B(k,|\mathrm{supp} \, s|-k+1)
		=\frac{1}{2k}{\,|\mathrm{supp} \, s|\, \choose k}^{-1},
	\end{align*}	
	which yields the statement.
\end{proof}

We start with the proof of (\ref{eq:ht}) stated in Theorem \ref{thm:main2}.
\begin{proof}[Proof of (\ref{eq:ht}).]
It is not difficult to conclude from the proof of Lemma \ref{was21} that for each subset $J$,  $\mathrm{Inf}_{J}^1[A]\leq1$. If $\mathrm{Inf}_{J}^1[A]=1$, then the right-hand side of  (\ref{eq:ht}) equals $+\infty$. Hence, it suffices to consider $\mathrm{Inf}_{J}^1[A]<1$.

We claim that it is enough to show for any subset $J$ with $|J|=k$ and any $\gamma\in\{1,2,3\}^k$, 
\begin{equation}\label{8rt}
	\int_0^\infty(e^{2t}-1)^{k-1} e^{-2kt} \,  \|P_t \, \partial_J^\gamma(A)\|_2^2 \,  dt\leq  \frac{1}{2}8^k (k-1)!\, \frac{\mathrm{Inf}_{J}^1[A]
	}{[\log(1/\mathrm{Inf}_{J}^1  [A])]^k}.
\end{equation}
Indeed, combining with  Lemma \ref{se1}, we get
\begin{align*}
	W_{\geq k}[A]
	\leq&\ 8^kk! \, \sum_{J\subset [n], |J|=k}\sum_{\gamma\in\{1,2,3\}^k}\frac{\mathrm{Inf}_{J}^1[A]
	}{[\log(1/\mathrm{Inf}_{J}^1  [A])]^k}\\
	=&\ 24^kk! \, \sum_{J\subset [n], |J|=k}\frac{\mathrm{Inf}_{J}^1[A]
	}{[\log(1/\mathrm{Inf}_{J}^1  [A])]^k},
\end{align*}
as desired.
To show (\ref{8rt}),
we use  hypercontractivity-Lemma \ref{basic} ii) to get
$$\|P_t \, \partial_J^\gamma(A)\|_2
\leq\|\partial_J^\gamma(A)\|_{p(t)}$$
with $p(t)=1+e^{-2t}$.
Note that by Lemma \ref{integral} and H\"older's inequality,
$$\|\partial_J^\gamma\|_{\infty\rightarrow\infty}\leq\|d_{J}\|_{\infty\rightarrow\infty}\leq2^k;$$
and for each $t\geq0$, 
$$\|\partial_J^\gamma(A)\|_{p(t)}\leq\|\partial_J^\gamma(A)\|_1^{1/p(t)}\|\partial_J^\gamma(A)\|_\infty^{1-1/p(t)}\leq2^k\|\partial_J^\gamma(A)\|_1^{1/p(t)}\leq2^k\mathrm{Inf}_{J}^1  [A]^{1/p(t)}.$$
Altogether, we deduce that
\begin{align*}
	&\
	\int_0^\infty(e^{2t}-1)^{k-1} e^{-2kt} \,  \|P_t \, \partial_J^\gamma(A)\|_2^2 \,  dt\\
	\leq&\  4^k \, 	\int_0^\infty (e^{2t}-1)^{k-1} \, e^{-2kt} \, \mathrm{Inf}_{J}^1  [A]^{2/p(t)} \, dt\\
	=&\ 4^k \, \mathrm{Inf}_{J}^1  [A]	\int_0^\infty (e^{2t}-1)^{k-1} \, e^{-2kt} \, \mathrm{Inf}_{J}^1  [A]^{2/p(t)-1}  \, dt.
\end{align*}	
	By letting  $s=\frac{1-e^{-2t}}{1+e^{-2t}} \in [0,1]$,  one has $t=\frac{\log(1+s)-\log(1-s)}{2}$ and
	$\frac{d t}{d s}=\frac{1}{2}\left(\frac{1}{1+s}+\frac{1}{1-s}\right)=\frac{1}{(1-s)(1+s)}$. Then
	%
	\begin{align*}
		&\
		\int_0^\infty (e^{2t}-1)^{k-1} \, e^{-2kt} \, \mathrm{Inf}_{J}^1  [A]^{2/p(t)-1}  \, dt\\
		=&\ \int_0^1 \Big(\frac{2s}{1-s}\Big)^{k-1} \Big(\frac{1-s}{1+s}\Big)^{k}\frac{1}{(1+s)(1-s)}\, \mathrm{Inf}_{J}^1  [A]^s \, ds\\
		\leq&\ 2^{k-1}\int_0^1 s^{k-1} \, \mathrm{Inf}_{J}^1  [A]^s \, ds. 
	\end{align*}
	Since for any $0<a<1$,
	$$\int_0^1 s^{k-1} \, a^s \, ds=\int_0^1 s^{k-1} \, e^{-s\log\frac{1}{a}} \, ds=\frac{1}{[\log1/a]^k}\int_0^{\log1/a} y^{k-1}e^{-y}
	\, dy\le \frac{\Gamma(k)}{[\log1/a]^k},$$
	where $\Gamma(k)=\int_0^\infty s^{k-1} \, e^{-s} \, ds$ is the Gamma function.
	As a consequence, we finally get
	\begin{align*}
		\int_0^\infty (e^{2t}-1)^{k-1} \, e^{-2kt} \, \mathrm{Inf}_{J}^1  [A]^{2/p(t)-1}  \, dt
		\le&\  2^{k-1} (k-1)!\, \frac{\mathrm{Inf}_{J}^1  [A]}{[\log1/\mathrm{Inf}_{J}^1  [A]]^k},
	\end{align*}	
	which yields (\ref{8rt}). Hence, the proof is complete.
\end{proof}

\begin{proof}[Proof of (\ref{eq:ht1}).]
	We have already proved the case $k=1$ (see Remark \ref{quantumkkl}). Assume that $k\geq2$. We consider two cases. If $$ \max_{|J|=k}\mathrm{Inf}_{J}^1[A]\geq\frac{1}{n^{k/2}},$$ then since $\log n\leq\sqrt{n}$ for all $n\in\N$, one has
$$\max_{|J|=k}\mathrm{Inf}_{ J}^1[A]\geq\Big(\frac{\log n
}{n}\Big)^k\geq W_{\geq k}[A]\Big(\frac{\log n
}{n}\Big)^k,$$	
as desired.

If for all subset $J$ with $|J|=k$,	$\mathrm{Inf}_{J}^1[A]\leq\frac{1}{n^{k/2}}$, then by (\ref{eq:ht}), 
\begin{align*}
	W_{\geq k}[A]
	\leq &\ 24^k \, k!\sum_{|J|=k}\frac{\mathrm{Inf}_{J}^1[A]
	}{\log(1/\mathrm{Inf}_{J}^1[A])^k}\\
	\leq &\ 24^k \, k!\Big(\frac{1}{\frac{k}{2}\log n}\Big)^k\sum_{|J|=k}\mathrm{Inf}_{J}^1[A]
	\\
	\leq &\ 24^k \, k!\Big(\frac{1}{\frac{k}{2}\log n}\Big)^k\binom{n}{k}\max_{|J|=k}\mathrm{Inf}_{J}^1[A]\\
	\leq &\ \frac{48^k}{k^k}\Big(\frac{n}{\log n}\Big)^k\max_{|J|=k}\mathrm{Inf}_{J}^1[A]\\
	\leq &\ C\Big(\frac{n}{\log n}\Big)^k\max_{|J|=k}\mathrm{Inf}_{J}^1[A],
\end{align*}
where $C$ is some absolute constant. This finishes the proof.
\end{proof}


\section{Proof of Theorem \ref{thm:main3}}
The proof of Theorem \ref{thm:main3} is based on the following level-inequality, which is of independent interest.
\begin{lem}\label{l8}
	Let $d\geq2$ and  $A\in M_2(\C)^{\otimes n}$ with $\|A\|\leq1$ such that
	\begin{align}\label{e22122222}
		\sum_{j=1}^n\mathrm{Inf}_{j}^1[A]^2\leq\exp(-2(d-1)).
	\end{align}
	Then there exists an absolute constant $C>0$ such that
	$$\sum_{|\mathrm{supp} \, \nu|=d}|\widehat A_{\nu}|^2\leq\frac{Ce}{d}\Big(\frac{2e}{d-1}\Big)^{d-1}\sum_{j=1}^n\mathrm{Inf}_{j}^1[A]^2\Big(\log\Big(\frac{d}{\sum\limits_{j=1}^n\mathrm{Inf}_{j}^1[A]^2}\Big)\Big)^{d-1},$$
	where $\widehat A_\nu$ is the Fourier coefficient of $A$.
\end{lem}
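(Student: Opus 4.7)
The strategy is to adapt the classical level-$d$ inequality of Keller--Kindler via the random restriction method in the Pauli basis. Pick $J\subset[n]$ to be a $\tfrac{1}{d}$-random subset. For any multi-index $\nu$ with $|\mathrm{supp}\,\nu|=d$, the probability that exactly one coordinate of $\mathrm{supp}\,\nu$ falls in $J$ equals $d\cdot\tfrac{1}{d}(1-\tfrac{1}{d})^{d-1}>\tfrac{1}{e}$, so taking expectations,
$$\sum_{|\mathrm{supp}\,\nu|=d}|\widehat A_\nu|^2\le e\,\mathbb{E}_J[S_J],\qquad S_J:=\sum_{\substack{s\in\{0,1,2,3\}^{J^c}\\|\mathrm{supp}\,s|=d-1}}\sum_{\substack{t\in\{0,1,2,3\}^{J}\\|\mathrm{supp}\,t|=1}}|\widehat A_{s\cup t}|^2.$$
It therefore suffices to establish the pointwise bound $S_J\lesssim\bigl(\tfrac{2e}{d-1}\bigr)^{d-1}\mathcal{W}_J[A]\bigl(\log\tfrac{1}{\mathcal{W}_J[A]}\bigr)^{d-1}$ with $\mathcal{W}_J[A]:=\sum_{j\in J}\mathrm{Inf}_j^1[A]^2$, and then integrate over $J$ using Jensen's inequality with the concave function $x\mapsto x(\log 1/x)^{d-1}$ on $[0,e^{-(d-1)}]$, noting $\mathbb{E}_J\mathcal{W}_J[A]=\tfrac{1}{d}\sum_j\mathrm{Inf}_j^1[A]^2$; hypothesis \eqref{e22122222} guarantees that the relevant values lie in the concavity range.

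For the restricted bound, for each $j\in J$, $k\in\{1,2,3\}$ I would group the coefficients into the degree-$(d-1)$ element
$h'_{j,k}:=\sum_{s\in\{0,1,2,3\}^{J^c},\,|\mathrm{supp}\,s|=d-1}\widehat A_{s\cup k(j)}\sigma_s\in M_2(\C)^{J^c}$
and set $b_{j,k}:=\tau_J(\sigma_{k(j)}A)$. Pauli orthogonality gives $\|h'_{j,k}\|_2^2=\langle h'_{j,k},b_{j,k}\rangle$, reducing the task to bounding these inner products. Lemmas \ref{restrict} and \ref{L2} yield $\|b_{j,k}\|_1\le\mathrm{Inf}_j^1[A]$, $\|b_{j,k}\|_2^2\le\mathrm{Inf}_j[A]$, and $\sum_{j,k}\|b_{j,k}\|_2^2\le\|A\|_2^2\le 1$ by Parseval. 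Normalising $h_{j,k}=h'_{j,k}/\|h'_{j,k}\|_2$, I split at a threshold $t_0>0$:
$$\langle h_{j,k},b_{j,k}\rangle\le t_0\|b_{j,k}\|_1+\int_{t_0}^\infty\tau_{J^c}\bigl(\chi_{\{|h_{j,k}|>t\}}|b_{j,k}|\bigr)\,dt.$$

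For the tail, hypercontractivity (Lemma \ref{basic}(ii)) applied to the degree-$(d-1)$ operator $h_{j,k}$ yields $\|h_{j,k}\|_p\le(p-1)^{(d-1)/2}$; plugging $p=p(t)=t^{2/(d-1)}/e$ into Markov's inequality gives the Gaussian-type bound $\tau_{J^c}(\chi_{\{|h_{j,k}|>t\}})\le\exp(-\tfrac{d-1}{2e}t^{2/(d-1)})$. Cauchy--Schwarz against $\|b_{j,k}\|_2$, the elementary integral estimate
$\int_{t_0}^\infty t^2 e^{-\frac{d-1}{2e}t^{2/(d-1)}}dt\lesssim t_0^{3-2/(d-1)}e^{-\frac{d-1}{2e}t_0^{2/(d-1)}}$ (valid for $t_0>(4e)^{(d-1)/2}$), squaring, and summing on $(j,k)$ produce
$S_J\lesssim t_0^2\,\mathcal{W}_J[A]+t_0^{2-2/(d-1)}e^{-\frac{d-1}{2e}t_0^{2/(d-1)}}$.
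I would optimise by choosing $t_0=(\tfrac{2e}{d-1})^{(d-1)/2}(\log\tfrac{1}{\mathcal{W}_J[A]})^{(d-1)/2}$, which equates the two terms and satisfies $t_0>(4e)^{(d-1)/2}$ thanks to \eqref{e22122222}.

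The main obstacle I anticipate is the proper handling of operator-valued signs: classically $h_{j,k}$ and $b_{j,k}$ would be real functions, but in the quantum setting they are general elements of $M_2(\C)^{J^c}$, so I must decompose $h_{j,k}=h_{j,k,1}-h_{j,k,2}+i(h_{j,k,3}-h_{j,k,4})$ into four positive parts (and similarly for $b_{j,k}$), verify that $\|h_{j,k,\ell}\|_p\le\|h_{j,k}\|_p$ (so that hypercontractivity still controls each piece), and book-keep the sixteen cross-terms produced by $|\langle h_{j,k},b_{j,k}\rangle|^2\le 16\sum_{\ell,\rho}|\langle h_{j,k,\ell},b_{j,k,\rho}\rangle|^2$. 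This noncommutative step is what distinguishes the argument from the classical Keller--Kindler proof, but ultimately it contributes only an absolute multiplicative constant.
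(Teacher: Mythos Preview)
Your proposal is correct and follows essentially the same route as the paper: a $\tfrac1d$-random restriction reduces to a fixed-$J$ estimate (the paper's Lemma~\ref{l3}), which is proved exactly as you describe---define $h'_{j,k}$ and $b_{j,k}=g_{j,k}$, decompose each into four positive parts, apply the layer-cake splitting at threshold $t_0$, control the tail via Markov plus degree-$(d-1)$ hypercontractivity and the integral bound of Lemma~\ref{l2}, then optimise $t_0$---and finally Jensen's inequality with $x\mapsto x(\log 1/x)^{d-1}$ on $[0,e^{-(d-1)}]$ passes from $\mathcal W_J[A]$ to $\tfrac1d\mathcal W[A]$. The sixteen-term positive decomposition you flag as the main obstacle is handled in the paper exactly as you anticipate, contributing only absolute constants.
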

Lemma \ref{l8} is a generalization of a lemma by Talagrand \cite{T2} (see also \cite{T3}). Talagrand's lemma gives a bound on the second-level Fourier coefficients for a monotone Boolean function in terms of its weight on first level coefficients. Later on,  Keller and Kindler \cite{KK} simplified the proof of Talagrand's lemma and give a  similar bound on the
weight on $d$-level coefficients for general Boolean functions.

We start with Lemma \ref{l8}, whose proof depends on the following two lemmas. 
The first lemma can be found in \cite[Lemma 12]{KK}. The statement there contains a term $B(p)$. Below is a special of $B(p) = 1$ by choosing $p=\frac{1}{2}$. We refer to \cite{KK} for its proof.
\begin{lem}\label{l2}
	Let $d\geq2$ be an integer and let $t_0$ be such that $t_0>( 4e)^{(d-1)/2}$. Then
	$$\int_{t_0}^\infty t^2\exp\Big(-\frac{d-1}{2e}\cdot t^{2/(d-1)}\Big)dt\leq5e \, t_0^{3-\frac{2}{d-1}} \, \exp\Big(-\frac{d-1}{2e}\cdot t_0^{2/(d-1)}\Big).$$
\end{lem}
With the above
we shall prove the lemma below.
Recall that for $k\in\{1,2,3\}$, $$\sigma_{k(j)}=
\sigma_0^{\otimes j-1}\otimes\sigma_{k}\otimes\sigma_0^{n-j}\ .$$
For $J\subset [n]$, we denote
\[\mathcal V_J[A]=\sum_{j\in J}\mathrm{Inf}_{j}^1[A]^2.\]
\begin{lem}\label{l3}
	Let $J\subset [n]$ be subset. Let $A\in M_2(\C)^{\otimes n}$ with $\|A\|\leq1$ and $d\geq2$. Assume that
\begin{align}\label{e221222222}
		\sum_{j=1}^n\mathrm{Inf}_{j}^1[A]^2\leq\exp(-2(d-1)).
	\end{align}
	 Then we have
	\begin{align}\label{e22221222}
\sum_{\substack{v\in\{0,1,2,3\}^{n}\\|\mathrm{supp }\, v\cap J|=1\\ |\mathrm{supp }\, v|=d}}|\widehat A_{v}|^2
		=&\sum\limits_{\substack{s\in\{0,1,2,3\} ^{J^c}\\|\mathrm{supp} \, s|=d-1}}\sum\limits_{\substack{t\in \{0,1,2,3\}^{J}\\|\mathrm{supp} \, t|=1}}|\widehat A_{s\cup t}|^2\\ \leq &10240\Big(\frac{2e}{d-1}\Big)^{d-1}\mathcal V_J[A]\Big(\log\Big(\frac{1}{\mathcal V_J[A]}\Big)\Big)^{d-1}.
	\end{align}
\end{lem}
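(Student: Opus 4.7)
The strategy is to adapt the argument of Keller--Kindler \cite{KK} (which is a streamlined version of Talagrand's original technique) to the quantum setting, following exactly the same recipe of layer--cake decomposition, Markov's inequality for high moments, and hypercontractivity, with the crucial new input being the Pauli partial trace formula from Lemma \ref{restrict}.

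First, for each $j\in J$ and $k\in\{1,2,3\}$, I would introduce the degree--$(d-1)$ operator
\[
h'_{j,k}\;=\;\sum\limits_{\substack{s\in\{0,1,2,3\}^{J^c}\\|\mathrm{supp}\, s|=d-1}}\sum\limits_{\substack{t\in\{0,1,2,3\}^{J}\\|\mathrm{supp}\, t|=1,\,t_j=k}}\widehat A_{s\cup t}\,\sigma_s
\]
and its $L^2$--normalization $h_{j,k}=h'_{j,k}/\|h'_{j,k}\|_2$. A direct inner--product computation shows $\langle h_{j,k}\sigma_{k(j)},A\rangle=\|h'_{j,k}\|_2$, so summing over $j\in J$ and $k\in\{1,2,3\}$ produces exactly the quantity on the left of \eqref{e22221222}:
\[
\sum_{k=1}^3\sum_{j\in J}\bigl(\langle h_{j,k}\sigma_{k(j)},A\rangle\bigr)^2
\;=\;\sum_{k=1}^3\sum_{j\in J}\|h'_{j,k}\|_2^2.
\]
Since $h_{j,k}$ commutes with $\sigma_{k(j)}$ and has Fourier support in $J^c$, the inner product reduces via the Pauli partial trace to $\tau_{J^c}(h_{j,k}\, b_{j,k})$ where $b_{j,k}:=\tau_J(\sigma_{k(j)}A)$. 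By Lemma \ref{restrict} one has the identity $b_{j,k}=\tau_J(d_j(A)\sigma_{k(j)})$, which gives both the pointwise control $\|b_{j,k}\|_1\leq\|d_j(A)\|_1=\mathrm{Inf}_j^1[A]$ and the $L^2$--control $\sum_{k,j}\|b_{j,k}\|_2^2\leq\|A\|_2^2\leq 1$.

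Next, I would decompose each of $h_{j,k}$ and $b_{j,k}$ into four positive pieces (real/imaginary, positive/negative), reducing the proof to bounding sixteen terms of the form $(\tau_{J^c}(h_{j,k,\ell}\,b_{j,k,\rho}))^2$ with $h_{j,k,\ell}\geq 0$, $b_{j,k,\rho}\geq 0$. For the generic positive pair, I would apply the layer--cake formula $\tau_{J^c}(h b)=\int_0^\infty\tau_{J^c}(\chi_{\{h>t\}}b)\,dt$ and split the integral at a threshold $t_0>(4e)^{(d-1)/2}$ to be chosen. The low part $\int_0^{t_0}$ is bounded trivially by $t_0\,\|b_{j,k}\|_1\leq t_0\,\mathrm{Inf}_j^1[A]$, so after squaring and summing over $j,k$ contributes $\lesssim t_0^2\,\mathcal V_J[A]$. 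For the high part $\int_{t_0}^\infty$, Cauchy--Schwarz (twice) reduces matters to controlling $\tau_{J^c}(\chi_{\{h_{j,k,\ell}>t\}})$, which by Markov's inequality at exponent $p:=t^{2/(d-1)}/e$ is bounded by $\|h_{j,k}\|_p^p/t^p$. Since $h_{j,k}$ is of degree at most $d-1$ with $\|h_{j,k}\|_2=1$, the hypercontractivity estimate \eqref{eq:degree} from Lemma \ref{basic} yields $\|h_{j,k}\|_p^p\leq (p-1)^{p(d-1)/2}\leq p^{p(d-1)/2}$, so that $\|h_{j,k}\|_p^p/t^p\leq\exp(-(d-1)t^{2/(d-1)}/(2e))$. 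Applying Lemma \ref{l2} then bounds the high part by a multiple of $t_0^{2-2/(d-1)}\exp(-(d-1)t_0^{2/(d-1)}/(2e))\,\|b_{j,k}\|_2^2$, which after summing over $j,k$ yields at most $10e\,t_0^{2-2/(d-1)}\exp(-(d-1)t_0^{2/(d-1)}/(2e))$ by the $L^2$--bound on $b$.

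Finally, I would choose the optimal threshold by setting $\exp(-(d-1)t_0^{2/(d-1)}/(2e))=\mathcal V_J[A]$, giving
\[
t_0\;=\;\Bigl(\tfrac{2e}{d-1}\Bigr)^{(d-1)/2}\Bigl(\log\tfrac{1}{\mathcal V_J[A]}\Bigr)^{(d-1)/2}.
\]
The hypothesis \eqref{e221222222} is exactly what guarantees $t_0>(4e)^{(d-1)/2}$ so that Lemma \ref{l2} applies. Inserting this $t_0$ balances the two contributions and gives the per--pair bound $40\bigl(\tfrac{2e}{d-1}\bigr)^{d-1}\mathcal V_J[A]\bigl(\log(1/\mathcal V_J[A])\bigr)^{d-1}$; multiplying by the factor $16$ from the four--fold decomposition on each side delivers the final constant $10240$. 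The main obstacle I expect is the bookkeeping around the partial--trace and the verification that hypercontractivity for the quantum depolarizing semigroup applies cleanly to the degree $d-1$ operator $h_{j,k}$ at arbitrary exponents $p$; once those quantum ingredients are set up, the rest of the optimization is essentially parallel to the classical Keller--Kindler argument.
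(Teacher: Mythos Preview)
Your proposal is correct and follows essentially the same route as the paper's proof: the same normalized degree-$(d-1)$ operators $h_{j,k}$, the same partial-trace reduction to $b_{j,k}=\tau_J(\sigma_{k(j)}A)$ (called $g_{j,k}$ in the paper), the same four-fold positive decomposition, layer-cake split at $t_0$, Markov plus hypercontractivity with $p=t^{2/(d-1)}/e$, Lemma~\ref{l2}, and the identical optimization of $t_0$. The only cosmetic wrinkle is the bookkeeping of the constant---the factor $10240$ arises as $16$ (Cauchy--Schwarz over the $4\times4$ cross terms) times $16$ (the number of pairs $(\ell,\rho)$) times $40$ (the per-pair bound), not a single factor of $16$---but you land on the right number.
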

\begin{proof}For $j\in J$ and $k\in\{1,2,3\}$, define
	$$h^{\prime}_{j,k}=\sum\limits_{\substack{s\in\{0,1,2,3\} ^{J^c}\\|\mathrm{supp} \, s|=d-1}}\sum\limits_{\substack{t\in \{0,1,2,3\}^{J}\\ \mathrm{supp} \, t=\{j\},t_j=k}}\widehat A_{s\cup t}\sigma_s.$$
Note that $\mathrm{supp} \, h^{\prime}_{j,k}$ is contained in $J^c$. Then
	\begin{align*}\langle h^{\prime}_{j,k}\sigma_{k(j)},A\rangle=&\Big\langle \sum\limits_{\substack{s\in\{0,1,2,3\} ^{J^c}\\|\mathrm{supp}  \, s|=d-1}}\sum\limits_{\substack{t\in \{0,1,2,3\}^{J}\\|\mathrm{supp} \, t=\{j\},t_j=k}}\widehat A_{s\cup t}\sigma_{s\cup t},A\Big\rangle\\ =&\sum\limits_{\substack{s\in\{0,1,2,3\} ^{J^c}\\|\mathrm{supp} \, s|=d-1}}\sum\limits_{\substack{t\in \{0,1,2,3\}^{J}\\\mathrm{supp} \, t=\{j\},t_j=k}}|\widehat A_{s\cup t}|^2 \\ =&\norm{h^{\prime}_{j,k}}{2}^2
\end{align*}
	Summing over $j\in J$ and $k\in\{1,2,3\}$ we obtain	
	\begin{align*}
		\big\langle \sum_{k=1}^3\sum_{j\in J}h^{\prime}_{j,k}\sigma_{k(j)},A\big\rangle	= &\ \sum_{k=1}^3\sum_{j\in J} \sum\limits_{\substack{s\in\{0,1,2,3\} ^{J^c}\\|\mathrm{supp}  \, s|=d-1}}\sum\limits_{\substack{t\in \{0,1,2,3\}^{J}\\\mathrm{supp} \, t={j},t_j=k}}|\widehat A_{s\cup t}|^2=\sum_{\substack{v\in\{0,1,2,3\}^{n}\\|\mathrm{supp }\, v\cap J|=1\\ |\mathrm{supp }\, v|=d}}|\widehat A_{v}|^2, 
	\end{align*}
which is exactly the left-hand side of (\ref{e22221222}).
	For convenience, let us normalize $h^{\prime}_{j,k}$ by taking  $h_{j,k}=h^{\prime}_{j,k}/\|h^{\prime}_{j,k}\|_2$. Then
	\begin{align*}
		\langle h_{j,k}\sigma_{k(j)},A\rangle= \| h^{\prime}_{j,k} \|_2.
	\end{align*}
By the observations above  it suffices to show
	\begin{align}\label{e2222222}
		\sum_{k=1}^3\sum_{j\in J}(	\langle h_{j,k}\sigma_{k(j)},A\rangle)^2\leq10240\Big(\frac{2e}{d-1}\Big)^{d-1}\mathcal V_J[A]\Big(\log\Big(\frac{1}{\mathcal V_J[A]}\Big)\Big)^{d-1}.
	\end{align}
	
	Let us now focus on the proof of (\ref{e2222222}). Since $h_{j,k}$ and $\sigma_{k(j)}$ commute for each $j\in J$ and $k\in\{1,2,3\}$, one has
	\begin{align*}
		\big(\langle h_{j,k}\sigma_{k(j)},A\rangle\big)^2=\big(\langle h_{j,k},\sigma_{k(j)}A\rangle\big)^2
		=  \Big(\tau_{J^c}\Big[h_{j,k}\cdot\tau_J[\sigma_{k(j)}A]\Big]\Big)^2.
	\end{align*}
	Recall that $g_{j,k}=\tau_J[\sigma_{k(j)}A]$. 
	Decompose \begin{align*}&h_{j,k}=h_{j,k,1}-h_{j,k,2}+i(h_{j,k,3}-h_{j,k,4})\\ &g_{j,k}=g_{j,k,1}-g_{j,k,2}+i(g_{j,k,3}-g_{j,k,4})\ ,\end{align*} where $h_{j,k,\ell}$ (resp.\ $g_{j,k,\ell}$), $\ell=1,2,3,4$ are positive operators with $\|h_{j,k,\ell}\|_q\leq\|h_{j,k}\|_q$ (resp. $\|g_{j,k,\ell}\|_q\leq\|g_{j,k}\|_q$) for all $q\geq1$ and $\ell=1,2,3,4$. As a consequence,  
	\begin{align}\label{e2221212222}
		(	\langle h_{j,k}\sigma_{k(j)},A\rangle)^2\leq16\sum_{\ell=1}^4\sum_{\rho=1}^4\Big(\tau_{J^c}[h_{j,k,\ell}g_{j,k,\rho}]\Big)^2.
	\end{align}
	
	Consider  the term $\Big(\tau_{J^c}[h_{j,k,1}g_{j,k,1}]\Big)^2$ firstly. We will show
	\begin{align}\label{e222122222}
		\sum_{k=1}^3\sum_{j\in J}	\Big(\tau_{J^c}[h_{j,k,1}g_{j,k,1}]\Big)^2\leq40\Big(\frac{2e}{d-1}\Big)^{d-1}\mathcal V_J[A]\Big(\log\Big(\frac{1}{\mathcal V_J[A]}\Big)\Big)^{d-1}.
	\end{align}
	To see this, by Fubini's theorem and Cauchy-Schwarz inequality we see that for any parameter $t_0>0$
	\begin{align*}
		\Big(\tau_{J^c}[h_{j,k,1}g_{j,k,1}]\Big)^2	= &\ \Big(\int_0^\infty\tau_{J^c}\Big[\chi_{\{h_{j,k,1}>t\}}g_{j,k,1}\Big]dt\Big)^2\\
		\leq &\ 2\Big(\int_0^{t_0}\tau_{J^c}\Big[\chi_{\{h_{j,k,1}>t\}}g_{j,k,1}\Big]dt\Big)^2\\
		&\ +2\Big(\int_{t_0}^\infty\tau_{J^c}\Big[\chi_{\{h_{j,k,1}>t\}}g_{j,k,1}\Big]dt\Big)^2:=I_{j,k}+II_{j,k}
	\end{align*}
where $\chi_{\{h_{j,k,1}>t\}}$ is the support projection of the positive part of $h_{j,k,1}-t{\bf 1} $.
		%
%
		The first term is easy to handle. Indeed, by using the fact $\|g_{j,k,1}\|_1\leq\|g_{j,k}\|_1$ and Lemma \ref{restrict}, we have
		\begin{align*}
			I_{j,k}	\leq &\ 2\Big(\int_0^{t_0}\tau_{J^c}[g_{j,k,1}]dt\Big)^2\leq2\Big(\int_0^{t_0}\tau_{J^c}\Big[\Big|\tau_J[\sigma_{k(j)}A]\Big|\Big]dt\Big)^2\\
			= &\ 2t_0^2 \, \Big(\tau_{J^c}\Big[\Big|\tau_J[d_j(A) \, \sigma_{k(j)}]\Big|\Big]\Big)^2\leq 2t_0^2 \, \mathrm{Inf}_{j}^1[A]^2.
		\end{align*}
For the second term, we apply the Cauchy-Schwarz inequality twice to get
		\begin{align*}
			II_{j,k}	\leq &\ 2\Big(\int_{t_0}^\infty \frac{1}{t^2}dt\Big)\int_{t_0}^\infty t^2\Big(\tau_{J^c}\big[\chi_{\{h_{j,k,1}>t\}}g_{j,k,1}\big]\Big)^2dt\\
			\leq &\ \frac{2}{t_0}\int_{t_0}^\infty t^2\tau_{J^c}[\chi_{\{h_{j,k,1}>t\}}] \, \tau_{J^c}\big[|g_{j,k,1}|^2\big] \, dt.
		\end{align*}
	Assume that $t_0>(4e)^{(d-1)/2}$. Then for any $t\geq t_0$, Markov's inequality
	gives
	$$\tau_{J^c}[\chi_{\{h_{j,k,1}>t\}}]\leq\frac{\|h_{j,k,1}\|_p^p}{t^p}\leq\frac{\|h_{j,k}\|_p^p}{t^p}$$	
with $p=t^{\frac{2}{d-1}}/e>4$.	It then follows from hypercontractivity-Lemma \ref{basic} ii) and noting $\|h_{j,k}\|_2=1$ that
\begin{align}\label{e222w122222}
\frac{\|h_{j,k}\|_p^p}{t^p}\leq\frac{\big((p-1)^{\frac{d-1}{2}}\|h_{j,k}\|_2\big)^p}{t^p}\leq\Big(\frac{p^{\frac{d-1}{2}}}{t}\Big)^p\leq\exp\Big(-\frac{d-1}{2e}t^{\frac{2}{d-1}}\Big).
\end{align}
Therefore, by (\ref{e222w122222}) and Lemma \ref{l2}, one gets
		\begin{align*}
			II_{j,k}	\leq &\ \frac{2}{t_0} \, \int_{t_0}^\infty t^2\exp\Big(-\frac{d-1}{2e} \,
			t^{2/(d-1)}\Big) \, dt \, \tau_{J^c}[|g_{j,k,1}|^2]\\
			\leq	&\  \frac{2}{t_0} \,  \int_{t_0}^\infty t^2\exp\Big(-\frac{d-1}{2e}t^{2/(d-1)}\Big) \, dt \; \tau_{J^c}[|g_{j,k}|^2]\\
			\leq &\ 10e  \,  t_0^{2-\frac{2}{d-1}} \,  \exp\Big(-\frac{d-1}{2e}\cdot t_0^{2/(d-1)}\Big)  \, \tau_{J^c}[|g_{j,k}|^2].
		\end{align*}
		By these two bounds, 
		$\sum_{k=1}^3\sum_{j\in J}	\Big(\tau_{J^c}[h_{j,k,1}g_{j,k,1}]\Big)^2$ is bounded above by
		\begin{align*}
			6t_0^2 \,  \mathcal V_J[A]
			+10e  \,  t_0^{2-\frac{2}{d-1}}\exp\Big(-\frac{d-1}{2e}\cdot t_0^{2/(d-1)}\Big)\sum_{k=1}^3\sum_{j\in J}\|g_{j,k}\|_2^2.
		\end{align*}
		Note that
\[ \sum_{k=1}^3\sum_{j\in J}\|g_{j,k}\|_2^2=\sum_{\substack{v\in\{0,1,2,3\}^{n}\\|\mathrm{supp }\, v\cap J|=1\\ |\mathrm{supp }\, v|=d}}|\widehat A_{v}|^2\le \norm{A}{2}^2\le 1.\]
		Therefore, we find
		\begin{align}\label{e22222222}
			\sum_{k=1}^3\sum_{j\in J}	\Big(\tau_{J^c}[h_{j,k,1}g_{j,k,1}]\Big)^2	\leq6t_0^2  \, \mathcal V_J[A]+10e  \,  t_0^{2-\frac{2}{d-1}}\exp\Big(-\frac{d-1}{2e}\cdot t_0^{\frac{2}{d-1}}\Big).
		\end{align}
		Now we choose $t_0$ such that
		$$\exp\Big(-\frac{d-1}{2e}\cdot t_0^{\frac{2}{d-1}}\Big)=\mathcal V_J[A].$$
By a simple calculation,
		$$ t_0=\Big(\frac{2e}{d-1}\Big)^{\frac{d-1}{2}}\Big(\log\Big(\frac{1}{\mathcal V_J[A]}\Big)\Big)^{\frac{d-1}{2}}$$
		and by assumption (\ref{e22122222}), we have  $t_0>(8e)^{(d-1)/2}$, satisfying the  requirement for Lemma \ref{l2} used in estimating $II_{j,k}$. Finally, substituting $t_0$ into (\ref{e22222222}) and noting $t_0>1$, we get
		$$\sum_{k=1}^3\sum_{j\in J}	\Big(\tau_{J^c}[h_{j,k,1}g_{j,k,1}]\Big)^2\leq 40\Big(\frac{ 2e}{d-1}\Big)^{d-1}\mathcal V_J[A]\Big(\log\Big(\frac{1}{\mathcal V_J[A]}\Big)\Big)^{d-1},$$
		which is exactly (\ref{e222122222}). Clearly, the above argument also works for the other fifteen terms in (\ref{e2221212222}) with the same bound as $\sum_{k=1}^3\sum_{j\in J}	\Big(\tau_{J^c}[h_{j,k,1}g_{j,k,1}]\Big)^2$ stated in (\ref{e222122222}). Hence, we obtain (\ref{e2222222}). This completes the proof.
	\end{proof}

Now we can finish the proof of Lemma \ref{l8}. Throughout the proof, we denote
$$\mathcal V[A]=\sum_{j=1}^n\mathrm{Inf}_{j}^1[A]^2.$$
\begin{proof}[Proof of Lemma \ref{l8}.]
	Choose $J$ to be a $\frac{1}{d}$- random subset of $[n]$. Then for each $\nu\in\{0,1,2,3\}^n$ with $|\mathrm{supp} \, \nu|=d$,
\[ \mathrm{Pr}[ |J\cap {\mathrm{supp} \nu}|=1]=(1-\frac{1}{d})^{d-1}>1/e\ .\]
Denote $\Omega_\nu:=\{ J \ |\ |J\cap {\mathrm{supp}\ , \nu}|=1\}$. Then
	$$\mathbb E_J\Big[ \sum_{|\mathrm{supp} \, \nu|=d} |\widehat A_{\nu}|^2 \; 1_{\Omega_\nu} \Big] = \sum_{|\mathrm{supp} \, \nu|=d} |\widehat A_{\nu}|^2 \; {\rm Pr} [\Omega_\nu] \; \geq \; \frac{1}{e} \sum_{|\mathrm{supp} \, \nu|=d} |\widehat A_{\nu}|^2.$$

On the other hand, by the assumption (\ref{e22122222}), and noting that $\mathcal V_J[A]\leq \mathcal V[A]$, we see that  $\mathcal V_J[A]$ is in the segment $[0,\exp(-(d-1))]$. Consequently, by Lemma \ref{l3} we have
	\begin{align*}
		\sum_{|\mathrm{supp} \, \nu|=d}|\widehat A_{\nu}|^2	\leq &\ e\mathbb E_J\Big[ 10240\Big(\frac{2e}{d-1}\Big)^{d-1}\mathcal V_J[A] \; \Big(\log\Big(\frac{1}{\mathcal V_J[A]}\Big)\Big)^{d-1}\Big]\\
		\leq &\ \frac{ 10240e}{d}
		\Big(\frac{2e}{d-1}\Big)^{d-1}\mathcal V[A] \; \Big(\log\Big(\frac{d}{\mathcal V[A]}\Big)\Big)^{d-1},
	\end{align*}	
	where in the last inequality we used Jensen's inequality and the concavity of the function $x\mapsto x\log(1/x)^{d-1}$ in the segment $[0,\exp(-(d-1))]$.
	This finishes the proof.
\end{proof}

In order to prove Theorem \ref{thm:main3}, the following estimate of the Fourier coefficients at level one is also needed.
\begin{lem}\label{l5}
	Let $A\in M_2(\C)^{\otimes n}$. Then
$$\sum_{|\mathrm{supp} \, s|=1}\, |\widehat A_{s}|^2\leq 3 \, \mathcal V[A]$$
\end{lem}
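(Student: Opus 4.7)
The plan is to reduce the bound to a bound on each Pauli coefficient at level one and then sum in a direct way. The Fourier indices $s \in \{0,1,2,3\}^n$ with $|\mathrm{supp}\, s|=1$ are precisely those of the form $s = k(j)$ for some $j \in [n]$ and $k \in \{1,2,3\}$. Thus
\begin{equation*}
\sum_{|\mathrm{supp}\, s|=1} |\widehat{A}_s|^2 = \sum_{j=1}^n \sum_{k=1}^3 |\widehat{A}_{k(j)}|^2,
\end{equation*}
and the goal reduces to showing $|\widehat{A}_{k(j)}| \leq \mathrm{Inf}_j^1[A]$ for each fixed $j$ and $k \in \{1,2,3\}$.

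For this, I would apply Lemma \ref{restrict} with the subset $J = \{j\}$: the equality $\tau_J(A \sigma_{k(j)}) = \tau_J(d_j(A)\sigma_{k(j)})$ (or, equivalently, the direct observation that $\tau(E_j(A)\sigma_{k(j)}) = 0$ since the Pauli matrix $\sigma_k$ at the $j$-th site is traceless for $k \neq 0$) yields
\begin{equation*}
\widehat{A}_{k(j)} = \tau(A\sigma_{k(j)}) = \tau(d_j(A)\sigma_{k(j)}).
\end{equation*}
Applying H\"older's inequality together with $\|\sigma_{k(j)}\|_\infty = 1$ then gives
\begin{equation*}
|\widehat{A}_{k(j)}| \leq \|d_j(A)\|_1 \|\sigma_{k(j)}\|_\infty = \mathrm{Inf}_j^1[A].
\end{equation*}

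Squaring and summing over $k \in \{1,2,3\}$ produces a factor of $3$ (since the bound does not depend on $k$), and then summing over $j \in [n]$ yields
\begin{equation*}
\sum_{j=1}^n \sum_{k=1}^3 |\widehat{A}_{k(j)}|^2 \leq 3 \sum_{j=1}^n \mathrm{Inf}_j^1[A]^2 = 3\,\mathcal{V}[A],
\end{equation*}
which is the desired inequality. There is no genuine obstacle here: the argument is a one-step reduction to a single H\"older estimate on the partial Fourier coefficient at level one, and the factor $3$ simply accounts for the three choices of the non-trivial Pauli label at the unique site of $\mathrm{supp}\, s$.
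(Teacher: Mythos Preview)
Your proof is correct and follows essentially the same approach as the paper: decompose the level-one weight as $\sum_{j}\sum_{k}|\widehat{A}_{k(j)}|^2$, use $\tau(A\sigma_{k(j)})=\tau(d_j(A)\sigma_{k(j)})$ together with H\"older to bound each coefficient by $\mathrm{Inf}_j^1[A]$, and sum. The only cosmetic difference is that you make the H\"older step and the origin of the factor $3$ more explicit.
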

\begin{proof}
Recall that
\[\sum_{|\mathrm{supp} \, s|=1}\, |\widehat A_{s}|^2=\sum_{j=1}^n\sum_{k=1}^3|\widehat{A}_{k(j)}|^2\]
where $k(j)=(0,\cdot\cdot\cdot,0,s_j,0,\cdot\cdot,0)$ with $s_j=k$. For each $k\in \{1,2,3\}$ and $j\in [n]$,
\[|\widehat{A}_{k(j)}|=|\tau(A\sigma_{k(j)})|=|\tau(d_j(A)\sigma_{k(j)})|\le \mathrm {Inf}^1_{j}(A)\ .\]
Summing over $j\in [n]$ and $k\in\{1,2,3\}$ yields the assertion.
\end{proof}

\begin{proof}[Proof of Theorem \ref{thm:main3}.]
Observe that
$$S_\delta[A]=\sum_{s\neq0} \, \delta^{|\mathrm{supp} \, s|} \, |\widehat A_{s}|^2.$$
It suffices to consider the case that $\mathcal V[A]<1$ since $
S_\delta[A]\leq \|A\|_2^2\leq1$.
Denote by $\lfloor x\rfloor$ the integer part of $x$. Define
	$$L=\lfloor \alpha\log(1/\mathcal V[A])\rfloor+1,\ \mbox{where}\ \alpha=\frac{1}{(1-\delta)+\log2e+3\log\log2e}.$$
Then we decompose
	$$S_\delta[A]=\sum_{0<|\mathrm{supp} \, s|\leq L}\delta^{|\mathrm{supp} \, s|} \, |\widehat A_{s}|^2+\sum_{|\mathrm{supp} \, s|> L}\delta^{|\mathrm{supp} \, s|} \, |\widehat A_{s}|^2,$$
and estimate each of the terms separately.	

To bound the high degree term, since $\|A\|_2\leq1$ and $\delta\in(0,1)$, it follows from Parseval's identity that
	\begin{align}\label{e2222222222}
	\sum_{|\mathrm{supp} \, s|> L}\delta^{|\mathrm{supp} \, s|}|\widehat A_{s}|^2\leq\delta^{L+1}\sum_{|\mathrm{supp} \, s|> L}|\widehat A_{s}|^2\leq\delta^{L+1}.
	\end{align}
Since $\log x\geq x-1$ for $0<x<1$ and $0<\delta<1$, we conclude from (\ref{e2222222222}) that
\begin{align}\label{e122222222222}
	\sum_{|\mathrm{supp} \, s|> L}\delta^{|\mathrm{supp} \, s|}|\widehat A_{s}|^2\leq\delta^{L}\leq\exp((\delta-1)\alpha\log(1/\mathcal V[A]))=\mathcal V[A]^{\alpha(1-\delta)}.
\end{align}

Now we turn to estimate the low degree term.  By applying Lemma \ref{l5} to degree $d=1$, one has
\begin{align}\label{e2w222222222}
\sum_{|\mathrm{supp} \, s|=1}\delta^{|\mathrm{supp} \, s|} \, |\widehat A_{s}|^2\leq\sum_{|\mathrm{supp} \, s|=1}\, |\widehat A_{s}|^2\leq3 \, \mathcal V[A].
\end{align}
where we used Lemma \ref{l5}.
On the other hand, since $0<\alpha<1/2$, we can verify that for all $2\leq d\leq  L$,  $\mathcal V[A]$ satisfies the assumption  (\ref{e22122222}) stated in Lemma \ref{l8}. Hence, we apply Lemma \ref{l8}, using that degree $d$ with $2\leq d\leq  L$, to obtain that
$$\sum_{2\leq|\mathrm{supp} \, s|\leq L}\delta^{|\mathrm{supp} \, s|} \, |\widehat A_{s}|^2\leq\sum_{d=2}^L\frac{10240e}{d}\Big(\frac{2e}{d-1}\Big)^{d-1}\mathcal V[A]\Big(\log\Big(\frac{d}{\mathcal V[A]}\Big)\Big)^{d-1}.$$
Combining with (\ref{e2w222222222}), we  see that
 $\displaystyle\sum_{0<|\mathrm{supp} \, s |\leq L}\delta^{|\mathrm{supp} \, s|}|\widehat A_{s}|^2$ is bounded above by
	\begin{align*}
3\mathcal V[A]+\sum_{d=2}^L\frac{10240e}{d}\Big(\frac{2e}{d-1}\Big)^{d-1}\mathcal V[A]\Big(\log\Big(\frac{d}{\mathcal V[A]}\Big)\Big)^{d-1}:=3\mathcal V[A]+\sum_{d=2}^L R_d.
	\end{align*}
Observe that for any $d$ with $1<d<L$, $R_{d+1}\geq2R_d$. Indeed, 
the ratio $R_{d+1}/R_d$ is
\begin{align*}
	&\
	\frac{d}{d+1}\Big(\frac{d-1}{d}\Big)^{d-1} \, \frac{2e}{d} \, \Big(\frac{\log((d+1)/\mathcal V[A])}{\log (d/\mathcal V[A])}\Big)^{d+1}\log\Big(\frac{d+1}{\mathcal V[A]}\Big)\\
	\geq&\  \frac{2}{d+1}\log\Big(\frac{d+1}{\mathcal V[A]}\Big)\geq\frac{2}{\log(1/\mathcal V[A])}\log\Big(\frac{d+1}{\mathcal V[A]}\Big)\geq2,
\end{align*}
where the second inequality holds since $d+1\leq L+1$ and $0<\alpha<1$.	
Hence, $2\sum_{d=2}^L R_d\leq \sum_{d=3}^L R_d+2R_L$, which implies $\sum_{d=2}^L R_d\leq2R_L$. Therefore,
%
	\begin{align}\label{e22222222222}
	\sum_{0<|\mathrm{supp} \, s|\leq L}\delta^{|\mathrm{supp} \, s|}|\widehat A_{s}|^2\leq 3\mathcal V[A]+\frac{20480e}{L}\Big(\frac{2e}{L-1}\Big)^{L-1}\mathcal V[A]\Big(\log\Big(\frac{L}{\mathcal V[A]}\Big)\Big)^{L-1}.
\end{align}
To further bound the low degree term, the right-hand side of (\ref{e22222222222})  equals to
	\begin{align*}
		 &\
 3\mathcal V[A]+\frac{20480e}{L} \, \Big(\frac{2e}{\alpha}\Big)^{L-1} \, \Big(\frac{\alpha}{L}\Big)^{L-1}\Big(\frac{L}{L-1} \Big)^{L-1} \, \mathcal V[A] \, \Big(\log\Big(\frac{L}{\mathcal V[A]}\Big)\Big)^{L-1}\\
 \leq &\ 3\mathcal V[A]+\frac{20480e^2}{L}\Big(\frac{1}{\log(1/\mathcal V[A])}\Big)^{L-1}\Big(\frac{2e}{\alpha}\Big)^{L-1}\mathcal V[A] \, \Big(\log\Big(\frac{L}{\mathcal V[A]}\Big)\Big)^{L-1}\\
	= &\ 3\mathcal V[A]+\frac{20480e^2}{L}\Big(1+\frac{\log(L)}{\log(1/\mathcal V[A])}\Big)^{L-1}\mathcal V[A] \, \Big(\frac{2e}{\alpha}\Big)^{L-1}\\
	\leq &\ 3\mathcal V[A]+\frac{20480e^2}{L}\Big(1+\frac{\log(L)}{\log(1/\mathcal V[A])}\Big)^{\log(1/\mathcal V[A])}\mathcal V[A] \, \Big(\frac{2e}{\alpha}\Big)^{L-1}\\
	\leq &\ 3\mathcal V[A]+20480e^2 \, \mathcal V[A] \, \Big(\frac{2e}{\alpha}\Big)^{L-1},
\end{align*}
where the first inequality holds since $L\leq\alpha\log(1/\mathcal V[A])+1$ and 
the third inequality is due to $\alpha<1$. Since $\mathcal V[A]<1$, one has
\begin{align}\label{e2289}
	3 \, \mathcal V[A]\leq 3 \, \mathcal V[A]^{\alpha(1-\delta)}.
\end{align}	
In the following, we claim that
\begin{align}\label{e228}
	\mathcal V[A] \, \Big(\frac{2e}{\alpha}\Big)^{L-1}\leq \, \mathcal V[A]^{\alpha(1-\delta)}.
\end{align}	
Indeed, observe that
	\begin{align*}
		\mathcal V[A] \, \Big(\frac{2e}{\alpha}\Big)^{L-1}\leq	\, \mathcal V[A] \, \Big(\frac{2e}{\alpha}\Big)^{\alpha\log(1/	\mathcal V[A])}= \,	\mathcal V[A]^{1-\alpha\log(\frac{2e}{\alpha})}
	\end{align*}
and
\begin{align*}
1-\alpha\log\Big(\frac{2e}{\alpha}\Big)
	= &\ \alpha\big[\frac{1}{\alpha}-\log2e-\log\frac{1}{\alpha}\big]\\
	=&\ \alpha(1-\delta)+\frac{3\log\log(2e)-\log1/\alpha}{(1-\delta)+\log(2e)+3\log\log(2e)}.
\end{align*}
Note that $3\log\log(2e)-\log1/\alpha\geq0$, 
$$3\log\log(2e)-\log1/\alpha\geq3\log\log(2e)-\log(1+\log2e+3\log\log2e)>0.$$
This proves (\ref{e228}), since $\mathcal V[A]^t$ is a decreasing function of $t$.
Finally, by  (\ref{e122222222222}), (\ref{e2289})  and (\ref{e228}), we find
	$$S_\delta[A]\leq160000\, \mathcal V[A]^{\alpha(1-\delta)}.$$
	This finishes the proof.
\end{proof}

\section{Quantum FKN theorem}
 Recall that a Boolean function $f:\{-1,1\}^n\rightarrow\{-1,1\}$ is called a $k$-junta if it depends on at most $k$ of its input coordinates; i.e., $f(x)=g(x_{i_1},\cdot\cdot\cdot,x_{i_k})$ for some $g:\{-1,1\}^k\rightarrow\{-1,1\}$ and $i_1,\cdot\cdot\cdot,i_k\in[n]$. 
A Boolean function $f:\{-1,1\}^n\rightarrow\{-1,1\}$ is called a $(\varepsilon,k)$-junta if there exists a function $g:\{-1,1\}^n\rightarrow\{-1,1\}$ depending on at most $k$-coordinates such that
$$\|f-g\|_2^2\leq\varepsilon.$$
The celebrated FKN theorem, proved by Friedgut, Kalai, and Naor \cite{FKN},
 states that if a Boolean function $f$ has $W_{>1}[f]:=\sum_{|S|>1} |\widehat f(S)|^2=\varepsilon$, then $f$ is $O(\varepsilon)$-close to a 1-junta function. The FKN theorem is quite useful in the study of hardness of approximation and social choice
theory (c.f.\ \cite{K1,D1}).

The FKN theorem for quantum Boolean functions was first appeared in \cite[Theorem 9.7]{MO} with a minor fixable issue in the proof \footnotemark \footnotetext{The proof contains an incorrect statement ``$h^2\ge 1$'', which can be fixed following the classical argument \cite{O} .We are grateful to Ashley Montanaro for private communication on this}. Here we provide an alternative proof using the idea of the original argument from \cite{FKN}.
Recall that $A$ is a quantum Boolean function if A is a Hermitian unitary, i.e. $A=A^*$ and $A^2=1$.
In analogy with the classical setting, an operator $A\in M_2(\C)^{\otimes n}$ is called a $k$-{\em junta} if $|\mathrm{supp}(A)|\le k$, where the support of $A$ is defined by
$$\mathrm{supp}(A)=\bigcup_{s|\widehat A_s\neq0}\mathrm{supp}(s).$$
For convenience, we set
$$\ell=\sum_{\substack{s\in \{0,1,2,3\}^n\\|\mathrm{supp} \, s|=1}}\widehat A_s \, \sigma_s\ \; \; \;  \mbox{and}\ \; \; \; h=\sum_{\substack{s\in \{0,1,2,3\}^n\\|\mathrm{supp}  \, s|>1}}\widehat A_s \, \sigma_s.$$
\begin{lem}\label{FKN}
	Let $A\in M_2(\C)^{\otimes n}$ be a quantum Boolean function and $\varepsilon>0$. Assume that $\|h\|_2^2=\varepsilon$. Then there exist $j\in[n]$ and an absolute constant $K>0$ such that $A$ is $K\varepsilon$-close to
	a 1-junta operator $B_j$. That is
    $$\|A- B_j\|_2^2\leq K\varepsilon.$$
\end{lem}

\begin{proof}
	According to \cite[Section 9.1]{MO}, we may assume that $\mathrm{tr}(A)=0$.	Then $A=\ell+h$. 	
Suppose that $\varepsilon\leq\frac{1}{10^6}$.
	By the  anti-commutation relation of Pauli
	matrices
	$$\sigma_j\sigma_k+\sigma_k\sigma_j=0,\ \qquad  j \neq k\in\{1,2,3\},$$
one has
	$\ell^2=(1-\varepsilon)\mathbb I+q$, where
	$$q=\sum_{\substack{|\mathrm{supp}s|=|\mathrm{supp} \, t|=1\\\mathrm{supp} \, s\cap\mathrm{supp} \, t=\emptyset}}\widehat A_s\widehat A_t \, \sigma_s\sigma_t.$$
Let $R=\ell^2-\mathbb I$. The proof will be finished if we can show that for any  $\alpha\in(0,1/2)$,
	\begin{equation}\label{1}
		\|R\|_2^2\leq \frac{\alpha^2}{1-\frac{54\sqrt{\varepsilon}}{\alpha}}.
	\end{equation}
	Indeed, choose $\alpha=108\sqrt{\varepsilon}$. Then (\ref{1}) implies $\|R\|_2^2\leq23328\varepsilon$. Recall that $k(j)=(0,\cdots,0,s_j,0\cdots, 0 )$ with $s_j=k$.
Since
 $R=-\varepsilon\mathbb{I}+q$, it follows from Parseval's identity that
	\begin{align*}
		23329\varepsilon
		\geq  \|q\|_2^2=&\ \sum_{\substack{s, t\in \{0,1,2,3\}^n\\|\mathrm{supp}s|=|\mathrm{supp} \, t|=1}}\widehat A_s^2\widehat A_t^2-\sum_{\substack{|\mathrm{supp}s|=|\mathrm{supp} \, t|=1\\|\mathrm{supp} \, s\cap\mathrm{supp} \, t|=1}}\widehat A_s^2\widehat A_t^2\\
=&\ (\sum_{\substack{s\in \{0,1,2,3\}^n\\|\mathrm{supp}s|=1}}\widehat A_s^2 )^2  -\sum_{j\in [n]}\sum_{k=1,2,3}\widehat A_{k(j)}^2\sum_{k'=1,2,3}\widehat A_{k'(j)}^2\\
		=&\ (1-\varepsilon)^2   -\sum_{j\in [n]}\sum_{k=1,2,3}\widehat A_{k(j)}^2\sum_{k'=1,2,3}\widehat A_{k'(j)}^2\\
		\geq&\  1-2\varepsilon-\max_{j\in [n]}\sum_{k=1,2,3}\widehat A_{k(j)}^2.
	\end{align*}
As a consequence, we have
$$\max_{j\in [n]}\sum_{k=1,2,3}\widehat A_{k(j)}^2\geq1-23331\varepsilon,$$
which implies that there is some $j\in[n]$ such that
\begin{equation}\label{182}
\sum_{k=1}^3\widehat A_{k(j)}^2\geq1-23331\varepsilon.
\end{equation}
If we define
\begin{equation}\label{183}
B_j=\sum_{k=1}^3\widehat A_{k(j)} \, \sigma_{k(j)}=E_{{j}^c}(A),
\end{equation}
then using Parseval's identity and (\ref{182}), one gets
	$$\|A-B_j\|_2^2=\varepsilon+\sum_{\substack{s\in \{0,1,2,3\}^n\\s_j=0}}\widehat A_t^2\leq23332\varepsilon,$$
which gives the desired assertion.


	It remains to show (\ref{1}). To see this, let $p=\tau\big(\chi_{\{|R|>\alpha\}}\big)$ where $\chi_{\{|R|>\alpha\}}$ is the spectrum projection of $|R|$. We claim that
	\begin{equation}\label{2}
		p\leq\frac{36\varepsilon}{\alpha^2}.
	\end{equation}
	Note that $R=(A-h)^2-\mathbb I=h^2-Ah-hA$. Then using a distribution inequality (see e.g. \cite[Lemma 16]{Su12})  and Chebyshev's inequality, we deduce that
	\begin{align*}
		p
		\leq&\  \tau\big(\chi_{\{|h^2|>\alpha/3\}}\big)+\tau\big(\chi_{\{|Ah|>\alpha/3\}}\big)+\tau\big(\chi_{\{|hA|>\alpha/3\}}\big)\\
		\leq&\ \frac{3}{\alpha}\|h\|_2^2+\frac{9}{\alpha^2}\|Ah\|_2^2+\frac{9}{\alpha^2}\|hA\|_2^2\leq\frac{36}{\alpha^2}\|h\|_2^2,
	\end{align*}
	as desired. Therefore, we get
	\begin{align*}
		\|R\|_2^2
		=&\  \tau\big(R^2\chi_{\{|R|\leq\alpha\}}\big)+\tau\big(R^2\chi_{\{|R|>\alpha\}}\big)\\
		\leq&\ \alpha^2(1-p)+\|R\|_4^2\cdot\sqrt{p}\\
		\leq&\ \alpha^2+\frac{54\sqrt{\varepsilon}}{\alpha}\|R\|_2^2,
	\end{align*}
	where in the first inequality we used Cauchy-Schwarz inequality and the second inequality follows from hypercontractivity Lemma \ref{basic} ii) and (\ref{2}).  This yields that
	$$	\|R\|_2^2\leq\frac{\alpha^2}{1-\frac{54\sqrt{\varepsilon}}{\alpha}},$$
	which finishes the argument of the case $\varepsilon\leq\frac{1}{10^6}$. The case $\varepsilon>\frac{1}{10^6}$ is trivial as
	$$\|A-B_j\|_2^2\leq2(\|A\|_2^2+\|B_j\|_2^2)\leq4\leq10^7\varepsilon.$$
	This completes the proof.
\end{proof}

We now present the quantum FKN theorem.
\begin{thm}\label{FKN1}
	Let $A\in M_2(\C)^{\otimes n}$ be a quantum Boolean function and $\varepsilon>0$. Assume that $\|h\|_2^2=\varepsilon$. Then there exist $j\in[n]$ and an absolute constant $K_1>0$ such that $A$ is $K_1\varepsilon$-close to
	a 1-junta quantum Boolean function $C_j$. That is,
	$$\|A- C_j\|_2^2\leq K_1\varepsilon.$$
\end{thm}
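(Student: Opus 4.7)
The plan is to start from Lemma \ref{FKN}, which produces a 1-junta Hermitian operator $B_j = E_{\{j\}^c}(A)$ (of the form $\mathbb{I}^{\otimes j-1}\otimes M\otimes \mathbb{I}^{\otimes n-j}$ for some self-adjoint $2\times 2$ matrix $M$) with $\|A - B_j\|_2^2 \le K\varepsilon$, and then ``round'' $B_j$ to a Hermitian unitary via the spectral sign function. Concretely, writing the spectral decomposition $B_j = \sum_i \lambda_i \, |\psi_i\rangle\langle \psi_i|$, I would set $C_j := \mathrm{sgn}(B_j) = \sum_i \mathrm{sgn}(\lambda_i)\, |\psi_i\rangle\langle \psi_i|$, with the convention $\mathrm{sgn}(0)=-1$. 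By construction $C_j$ is self-adjoint with spectrum in $\{\pm 1\}$, hence $C_j^2 = \mathbb{I}$, so $C_j$ is a quantum Boolean function. Since functional calculus is applied to an operator acting non-trivially only on the $j$-th qubit, $C_j$ retains the same tensor shape and is therefore a genuine $1$-junta.

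The key pointwise inequality is $|\lambda - \mathrm{sgn}(\lambda)|^2 \le |\lambda^2 - 1|^2$, which follows from $|\lambda + \mathrm{sgn}(\lambda)| \ge 1$ on $\mathbb{R}$. Applied spectrally to $B_j$ this yields
\begin{equation*}
\|B_j - C_j\|_2^2 \;\le\; \|B_j^2 - \mathbb{I}\|_2^2.
\end{equation*}

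To bound the right-hand side I would exploit $A^2 = \mathbb{I}$ to write $B_j^2 - \mathbb{I} = B_j^2 - A^2 = (B_j - A)B_j + A(B_j - A)$, and then use $\|B_j\| \le \|A\| \le 1$ (since $B_j$ is a conditional expectation of $A$) together with the triangle inequality in $L^2$:
\begin{equation*}
\|B_j^2 - \mathbb{I}\|_2^2 \;\le\; 2\bigl(\|(B_j - A)B_j\|_2^2 + \|A(B_j - A)\|_2^2\bigr) \;\le\; 4 \|A - B_j\|_2^2 \;\le\; 4K\varepsilon.
\end{equation*}
Finally, another application of the triangle inequality gives
\begin{equation*}
\|A - C_j\|_2^2 \;\le\; 2\bigl(\|A - B_j\|_2^2 + \|B_j - C_j\|_2^2\bigr) \;\le\; 2K\varepsilon + 8K\varepsilon \;=\; 10K\varepsilon,
\end{equation*}
so the theorem holds with $K_1 = 10 K$ (or any comparable absolute constant).

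I do not expect a serious obstacle: the substantive work was done in Lemma \ref{FKN}, and the present theorem is a clean spectral ``rounding'' step. The only points requiring care are (i) verifying that $\mathrm{sgn}(B_j)$ really is a $1$-junta quantum Boolean function, which is immediate from functional calculus on the single-qubit block, and (ii) the contractivity $\|B_j\| \le 1$, which follows because $B_j = E_{\{j\}^c}(A)$ is a conditional expectation of a contraction.
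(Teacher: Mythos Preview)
Your proof is correct and essentially identical to the paper's: both invoke Lemma~\ref{FKN} to obtain the 1-junta $B_j=E_{\{j\}^c}(A)$, define $C_j=\mathrm{sgn}(B_j)$ via spectral calculus (with the same convention $\mathrm{sgn}(0)=-1$), use the pointwise inequality $|\lambda-\mathrm{sgn}(\lambda)|^2\le|\lambda^2-1|^2$ to get $\|B_j-C_j\|_2^2\le\|B_j^2-\mathbb{I}\|_2^2$, and then bound $\|B_j^2-A^2\|_2^2\le 4K\varepsilon$ via $\|B_j\|\le 1$. Your final constant $K_1=10K$ is in fact the arithmetically correct one; the paper's stated $6K$ appears to be a minor slip.
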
	
\begin{proof}
	By Lemma \ref{FKN} and its proof, there are some $j\in[n]$, $K>0$ and a 1-junta operator $B_j$ given by (\ref{183}) such that $\|A- B_j\|_2^2\leq K\varepsilon$.
	Given the spectral decomposition $B_j=\sum_i\lambda_i|\psi_i\rangle\langle \psi_i|$,  we define the self-adjoint unitary
$$\mathrm{sgn}(B_j)=\sum_{i}\mathrm{sgn}(\lambda_i)|\psi_i\rangle\langle\psi_i|,$$ where the sign function is defined as $\mathrm{sgn}(x)=1$ if $x>0$ and $\mathrm{sgn}(x)=-1$ if $x\leq0$.	
Since $|\lambda +\mathrm{sgn}(\lambda)|\ge 1\ge |\lambda -\mathrm{sgn}(\lambda)|$, it follows that
	\begin{equation*}
		|\lambda-\mathrm{sgn}(\lambda)|^2\le |(\lambda-\mathrm{sgn}(\lambda))(\lambda+\mathrm{sgn}(\lambda))|^2=|\lambda^2-1|^2
	\end{equation*}
	and consequently,
	\begin{equation*}
		\|B_j-C_j\|_2^2=\frac{1}{2^n}\sum_{i}|\lambda_i-\mathrm{sgn}(\lambda_i)|^2\le \frac{1}{2^n}\sum_{i}|\lambda_i^2-1|^2= \|B_j^2-\1\|_2^2\,.
	\end{equation*}
	Therefore, we find
$$\|A-C_j\|^2_2\le 2\big(\|A-B_j\|^2_2+\|B_j-C_j\|^2_2\big)\le 2K\varepsilon+2\|B_j^2-\1\|^2_2.$$	
Since $A^2=\1$ and $\|B_j\|=\norm{E_{{j}^c}(A)}{}\leq\|A\|\leq1$, we have	
	\begin{align*}
	\|B_j^2-\1\|^2_2=
		\|B_j^2-A^2\|^2_2
		&\le 2\big(\|(B_j-A)B_j\|^2_2+\|A(B_j-A)\|^2_2\big)\leq4K\varepsilon,
	\end{align*}
	which gives that $\|A-C_j\|^2_2\le6K\varepsilon$. By letting $K_1=6K$, we finish the proof.	
\end{proof}

\section{Summary and Discussions}
We end this paper with some further discussions.
\subsection{Quantum $L^2$-KKL theorem}
Follow the work \cite{DJKR}, we use the method of random restriction to obtain a dimension free bound for quantum $L^1$-influence, which implies the quantum KKL-theorem for $L^1$ influence proved in \cite{RWZ}. Same as \cite{RWZ}, our method also works with $L^p$-influence with $1\le p<2$ but with a blow-up constant $C_p\to \infty$ as $p\to 2$ (see Remark \ref{quantumlp}). For higher order quantum $L^1$-influence, we obtain the KKL type theorem and Talagrand inequality with constant independent of order $k$. This extends the classical case work \cite{TP} to the quantum setting.
From this regard, the original conjecture of quantum KKL theorem (as well as high order case) for $L^2$-influence from \cite{MO} remains open.

\subsection{Quantum Bourgain's junta theorem}
Using  random restriction, we also prove a upper bound of the noise stability via square sum of quantum $L^1$-influence. We believe the method of random restriction adopted in this work will have potential applications in other problem in quantum Boolean analysis.
Nevertheless, there is a key difference in random restrictions between classical setting and quantum setting. That is, the restrictions of a Boolean function remains Boolean, but the restrictions of a quantum Boolean function (self-adjoint unitary) are usually not Boolean. This feature may leads to some limitation of applications of random restriction in the quantum setting. For instance, the Bourgain's junta theorem \cite{Bou02} that naturally reflects this phenomenon.

\medskip


\end{document}